\DeclareMathSymbol{A}{\mathalpha}{operators}{`A}%
\DeclareMathSymbol{B}{\mathalpha}{operators}{`B}%
\DeclareMathSymbol{C}{\mathalpha}{operators}{`C}%
\DeclareMathSymbol{D}{\mathalpha}{operators}{`D}%
\DeclareMathSymbol{E}{\mathalpha}{operators}{`E}%
\DeclareMathSymbol{F}{\mathalpha}{operators}{`F}%
\DeclareMathSymbol{G}{\mathalpha}{operators}{`G}%
\DeclareMathSymbol{H}{\mathalpha}{operators}{`H}%
\DeclareMathSymbol{I}{\mathalpha}{operators}{`I}%
\DeclareMathSymbol{J}{\mathalpha}{operators}{`J}%
\DeclareMathSymbol{K}{\mathalpha}{operators}{`K}%
\DeclareMathSymbol{L}{\mathalpha}{operators}{`L}%
\DeclareMathSymbol{M}{\mathalpha}{operators}{`M}%
\DeclareMathSymbol{N}{\mathalpha}{operators}{`N}%
\DeclareMathSymbol{O}{\mathalpha}{operators}{`O}%
\DeclareMathSymbol{P}{\mathalpha}{operators}{`P}%
\DeclareMathSymbol{Q}{\mathalpha}{operators}{`Q}%
\DeclareMathSymbol{R}{\mathalpha}{operators}{`R}%
\DeclareMathSymbol{S}{\mathalpha}{operators}{`S}%
\DeclareMathSymbol{T}{\mathalpha}{operators}{`T}%
\DeclareMathSymbol{U}{\mathalpha}{operators}{`U}%
\DeclareMathSymbol{V}{\mathalpha}{operators}{`V}%
\DeclareMathSymbol{W}{\mathalpha}{operators}{`W}%
\DeclareMathSymbol{X}{\mathalpha}{operators}{`X}%
\DeclareMathSymbol{Y}{\mathalpha}{operators}{`Y}%
\DeclareMathSymbol{Z}{\mathalpha}{operators}{`Z}%
\renewcommand{\leq}{\leqslant}
\renewcommand{\geq}{\geqslant}
\renewcommand{\le}{\leqslant}
\renewcommand{\ge}{\geqslant}
\renewcommand{\phi}{\varphi}
\newcommand{\Cc}{\mathbf{C}}
\newcommand{\Aa}{\mathbf{A}}
\newcommand{\Zz}{\mathbf{Z}}
\newcommand{\Pp}{\mathbf{P}}
\newcommand{\Rr}{\mathbf{R}}
\newcommand{\Qq}{\mathbf{Q}}
\newcommand{\Ff}{\mathbf{F}}
\newcommand{\mods}[1]{\,(\mathrm{mod}\,{#1})}
\newcommand{\cQ}{\mathcal{Q}}
\newcommand{\cP}{\mathcal{P}}
\newcommand{\tcP}{\widetilde{\mathcal{P}}}
\DeclareMathOperator{\vol}{Vol}
\DeclareMathOperator{\disc}{disc}
\newcommand{\eps}{\varepsilon}
\renewcommand{\rho}{\varrho}
\DeclareMathSymbol{\gena}{\mathord}{letters}{"3C}
\DeclareMathSymbol{\genb}{\mathord}{letters}{"3E}
\theoremstyle{plain}
\newtheorem{theorem}{Theorem}[section]
\newtheorem{lemma}[theorem]{Lemma}
\newtheorem{conjecture}[theorem]{Conjecture}
\newtheorem{proposition}[theorem]{Proposition}
\theoremstyle{remark}
\theoremstyle{definition}
\newtheorem{assumption}[theorem]{Assumption}
\newtheorem{example}{Example}
\newtheorem{remark}[theorem]{Remark}
\renewcommand{\geq}{\geqslant}
\renewcommand{\leq}{\leqslant}
\begin{document}

\title{Equidistribution from the Chinese Remainder Theorem}

\author{E. Kowalski}
\address{ETH Z\"urich -- D-MATH\\
  R\"amistrasse 101\\
  8092 Z\"urich\\
  Switzerland} 
\email{kowalski@math.ethz.ch}

\author{K. Soundararajan}
\address{Department of Mathematics, Stanford University, Stanford, CA 94305}
\email{ksound@stanford.edu}

\date{\today,\ \thistime} 

\begin{abstract}
  We prove the equidistribution of subsets of $(\Rr/\Zz)^n$ defined by
  fractional parts of subsets of~$(\Zz/q\Zz)^n$ that are constructed
  using the Chinese Remainder Theorem.
\end{abstract}


\maketitle

\begin{flushright}
  \textit{Dedicated to the memory of Hédi Daboussi}
\end{flushright}

\bigskip
\bigskip

\section{Introduction}

Given an irreducible quadratic polynomial $f\in \Zz[X]$, the
celebrated work of Duke, Friedlander, and Iwaniec \cite{dfi} (see also
Toth \cite{toth}) shows that the roots of the congruence
$f(x)\equiv 0\mods p$ become equidistributed when taken over all
primes $p \leq P$.  Precisely, their results establish the
equidistribution in $\Rr/\Zz$ of the points $x_p/p$ taken over all
$p\leq P$ and roots $x_p$ of $f(x_p) \equiv 0\mods p$.  A similar
result is expected for roots of polynomials of higher degree, but this
remains an outstanding open problem.  In~\cite{hooley}, Hooley
established that if one considers instead the roots of a polynomial
congruence $\mods n$ over all integer moduli $n$, then a suitable
equidistribution result holds.  In this paper we show that Hooley's
result may be recast as a general fact concerning the equidistribution
of sets arising from the Chinese Remainder Theorem.  Our work was
partly motivated by the paper~\cite{granville-kurlberg} of Granville
and Kurlberg (who consider the spacing between elements of ``large'' sets
defined by the Chinese Remainder Theorem).  Some applications
were also suggested by recent work of Hrushovski~\cite{hrushovski}.

For simplicity, we begin by considering equidistribution in $\Rr/\Zz$;
later we shall discuss the higher dimensional case of points
in~$(\Rr/\Zz)^n$.  Suppose that for each prime power $p^v$ we are
given a set $A_{p^v}$ of residue classes modulo $p^v$ (where throughout we include primes among the prime powers, and exclude $1$).  Let
$\rho(p^v)=|A_{p^v}|$. We allow for the possibility that
$\rho(p^v)=0$, so that $A_{p^v}$ is empty, for some prime powers
$p^v$, and no assumptions are made concerning the relations between
the sets $A_{p^{v_1}}$ and $A_{p^{v_2}}$ corresponding to different
powers of the prime $p$.  For a positive integer~$q$,
let~$A_q\subset \Zz/q\Zz$ denote the set of residue
classes~$x\mods{q}$ such that~$x\mods{p^v}\in A_{p^v}$ for all prime
powers~$p^v$ exactly dividing~$q$ (that is, $p^v|q$ but
$p^{v+1} \nmid q$; we denote this by $p^v \Vert q$ from now on). These
are the ``sets defined using the Chinese Remainder Theorem.''
Let~$\rho(q)=|A_q|$, so that (setting $\rho(1)=1$) the function~$\rho(q)$ is multiplicative:
$$
\rho(q)=\prod_{p^v \Vert q}\rho(p^v).
$$
 
Let $\cQ$ denote the set of all $q$ with $\rho(q)\geq 1$, and for any
integer~$k\geq 1$, let $\cQ_k$ denote the elements of $\cQ$ with
exactly $k$ distinct prime factors.  Further, for~$x\geq 1$,
let~$\cQ(x)$ (resp. $\cQ_k(x)$) denote the subset of elements of~$\cQ$
(resp. of~$\cQ_k$) that are~$\leq x$.  In order to ensure that the
sets $\cQ$ and $\cQ_k$ are well behaved and have plenty of elements we
shall make the following assumption.

\begin{assumption}  
\label{Ass1}  
There exist constants $\alpha >0$ and $x_0 \ge 2$ such that for all
$x\ge x_0$
$$ 
\sum_{\substack{ p\le x \\ \rho(p) \ge 1 }} \log p \ge \alpha x.
$$ 
\end{assumption} 

Throughout we operate under Assumption \ref{Ass1}, and the parameter
$x$ will be considered to be large in terms of $\alpha$ and $x_0$, so
that for example we would have
$\alpha \log \log x \ge \sqrt{\log \log x}$.

\par
\medskip
\par

Given $q\in \cQ$, we define a probability measure $\Delta_q$ on
$\Rr/\Zz$ by
$$ 
\Delta_q= \frac{1}{\rho(q)}\sum_{ a \in A_q}\delta_{\{\tfrac{a}{q}\}}
$$
where~$\delta_t$ denotes a Dirac mass at the point~$t$, and
$\{\cdot\}$ denotes the fractional part of a real number.  The
limiting behavior of such measures is the object of our study.  For
example, we are interested in knowing whether $\Delta_q$ tends to the
uniform measure for most $q \in \cQ$.  To quantify whether $\Delta_q$
is close to uniform, we use the discrepancy
$$ 
\disc(\Delta_q)=\sup_{I \subset \Rr/\Zz}
|\Delta_q(I)- |I| |, 
$$
where the supremum is taken over all closed intervals $I$ in
$\Rr/\Zz$, and $|I|$ denotes the length of the interval $I$.  By a
(closed) interval in ${\Rr/\Zz}$ we mean the image in $\Rr/\Zz$ of a
(closed) interval in $\Rr$ of length at most $1$.  One has
$0\le \disc(\Delta_q) \le 1$ for all $q$, and a small value of
$\disc(\Delta_q)$ indicates that $\Delta_q$ is close to uniform.

\begin{theorem}
  \label{thm1}
  Suppose that \textup{Assumption \ref{Ass1}} holds, and that $x$ is
  large in terms of $\alpha$ and $x_0$.  Then, there is an absolute
  constant $C$ such that
  $$ 
  \frac{1}{|\cQ(x)|} \sum_{q \in \cQ(x)} \disc(\Delta_q) \le
  \frac{C}{\alpha} \exp\Big( - \frac{1}{6} \sum_{\substack{ p\le x \\
      \rho(p) \ge 2}} \frac 1p\Big).
  $$  
\end{theorem}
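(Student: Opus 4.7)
The plan is to apply the Erd\H{o}s--Tur\'an inequality to the empirical measures $\Delta_q$. Writing $e(t):=\exp(2\pi it)$ and $\hat\Delta_q(h):=\rho(q)^{-1}\sum_{a\in A_q}e(ha/q)$, this gives for every integer $H\ge 1$
\[
  \disc(\Delta_q)\;\ll\;\frac{1}{H}+\sum_{h=1}^{H}\frac{1}{h}\,|\hat\Delta_q(h)|.
\]
I would then average over $q\in\cQ(x)$, apply Cauchy--Schwarz inside each $h$-term to pass to the second moment $M(h):=|\cQ(x)|^{-1}\sum_{q\in\cQ(x)}|\hat\Delta_q(h)|^{2}$, and tune $H$ at the end. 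The remaining task is to prove a bound of the form $M(h)\ll\alpha^{-2}\exp(-\tfrac{1}{3}\sum_{p\le x,\,\rho(p)\ge 2,\,p\nmid h}1/p)$: the exponent $\tfrac{1}{6}$ in the theorem is the square root of $\tfrac{1}{3}$, and the factor $1/\alpha$ is the cost of the Chebyshev-type lower bound on $|\cQ(x)|$ furnished by Assumption~\ref{Ass1}.

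The Chinese Remainder Theorem supplies the twisted multiplicative factorization
\[
  |\hat\Delta_q(h)|^{2}\;=\;\prod_{p^v\Vert q}|\hat\Delta_{p^v}(h\,\overline{q/p^v})|^{2},
\]
where $\overline{q/p^v}$ denotes the inverse of $q/p^v$ modulo $p^v$. The essential local input is Parseval on $\Zz/p\Zz$,
\[
  \frac{1}{p-1}\sum_{h'=1}^{p-1}|\hat\Delta_p(h')|^{2}\;=\;\frac{p-\rho(p)}{\rho(p)(p-1)}\;\le\;\frac{1}{\rho(p)},
\]
together with the analogous bound at prime powers; these drop below $1/2$ exactly when $\rho(p)\ge 2$, which is why those are the primes that survive in the exponent of the theorem. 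To estimate $M(h)$ I would expand $|\hat\Delta_q(h)|^{2}\rho(q)^{2}=\sum_{b\bmod q}N_q(b)\,e(hb/q)$, where $N_q(b):=|\{(a_1,a_2)\in A_q^{2}:a_1-a_2\equiv b\pmod q\}|$ factors multiplicatively by CRT, and then execute the sum of $|\hat\Delta_q(h)|^{2}$ over $q\in\cQ(x)$ prime by prime, replacing the twist $h\,\overline{q/p^v}$ on each local factor by its average over a complete set of nonzero residues modulo $p^v$, up to a controllable error.

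Each prime $p\le x$ with $\rho(p)\ge 2$ and $p\nmid h$ will then contribute a local factor of shape $(1-\pi_p)+\pi_p/\rho(p)\le 1-\pi_p/2\le \exp(-\pi_p/2)$, where $\pi_p\gg 1/p$ is the density of $q\in\cQ(x)$ divisible by $p$---again a consequence of Assumption~\ref{Ass1}. Multiplying these local contributions yields the claimed bound on $M(h)$, and inserting it into the Erd\H{o}s--Tur\'an estimate---together with a trivial $(\log H)^{O(1)}$ overhead from primes dividing $h$, and the choice of $H$ as a small power of $x$---yields Theorem~\ref{thm1}. \textbf{The main obstacle} is the prime-by-prime unfolding of $M(h)$: one must show that, \emph{simultaneously} for every prime $p\le x$, the twist $h\,\overline{q/p^v}$ behaves essentially like a uniform nonzero residue modulo $p^v$ when $q$ ranges over $\cQ(x)\cap p^v\Zz$, with error terms that remain summable in $p$. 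Making this rigorous under the single combinatorial hypothesis of Assumption~\ref{Ass1} is where the technical heart of the proof must lie.
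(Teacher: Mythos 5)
Your plan correctly isolates the main ingredients that the paper also uses: the Erd\H{o}s--Tur\'an inequality, the twisted multiplicativity $W(h;q_1q_2)=W(\bar q_1h;q_2)W(\bar q_2h;q_1)$ supplied by the Chinese Remainder Theorem, the local Parseval bound $\frac1p\sum_{a}|W(ah;p)|^2\le 1/\rho(p)$ (which is what makes primes with $\rho(p)\ge 2$ the ones that matter), and the lower bound $|\cQ(x)|\gg\alpha x(\log x)^{-1}\prod_{p\le x,\,p\in\cQ}(1+1/p)$ as the source of the $1/\alpha$. Your accounting of where the exponent $1/6$ comes from is also consistent with the paper's final bookkeeping. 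But the proof is incomplete at exactly the point you flag as ``the main obstacle,'' and I do not think the route you sketch gets past it.

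Concretely: after Cauchy--Schwarz in $q$ you need to bound $M(h)=|\cQ(x)|^{-1}\sum_{q\in\cQ(x)}|\hat\Delta_q(h)|^2$, and the CRT factorization gives $|\hat\Delta_q(h)|^2=\prod_{p^v\Vert q}|\hat\Delta_{p^v}(h\,\overline{q/p^v})|^2$ in which the argument at the place $p^v$ depends on the residue of $q/p^v$ modulo $p^v$. These arguments are entangled across all primes dividing $q$, so there is no direct Euler-product unfolding of $M(h)$; the ``replace the twist by a uniform average at every prime simultaneously'' step is precisely what cannot be justified from Assumption~\ref{Ass1} alone in your framework. The paper's proof (Section~\ref{sec4} together with Lemmas~\ref{lem11} and~\ref{lem4.2}) sidesteps this entanglement by a rough/smooth factorization $q=rs$, where all prime factors of $s$ are $\le z=x^{1/P}$ and all prime factors of $r$ are $>z$. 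On the $r$-side the Weyl factor is bounded trivially by $1$. On the $s$-side the twist $\bar r h$ is handled by partitioning the rough numbers $r$ according to the class $a\equiv\bar r\pmod s$, using an upper-bound sieve to get $\#\{r\le x/s:\ r\equiv\bar a\ (\mathrm{mod}\ s),\ p\mid r\Rightarrow p>z\}\ll x/(s\varphi(s)\log z)$ \emph{uniformly} in the class $a$, and then applying Cauchy--Schwarz and Parseval \emph{in $a\bmod s$} rather than in $q$. This isolates the twisting into a single (smooth) modulus $s$, which is exactly what turns the intractable twist into a legitimate average. The complementary range $s>x^{1/3}$ is disposed of by the smooth-number count of Lemma~\ref{lemQ2}. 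To repair your proposal you would need to import this $q=rs$ decomposition (or an equivalent device): as written, the prime-by-prime unfolding of $M(h)$ is a gap, not a step.
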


\begin{remark}
  (1) If we write
$$
\sum_{\substack{ p\le x \\  \rho(p) \ge 2} } \frac 1p = P,
$$ 
then Theorem \ref{thm1} guarantees that apart from at most
$C\alpha^{-1} |\cQ(x)| e^{-P/12}$ values of $q$, one has
$\disc(\Delta_q) \le e^{-P/12}$.  Thus if $P$ is large then for almost
all $q\le x$ with $q\in \cQ$ one has equidistribution of the sets
$A_q$ (by which we mean the equidistribution of the measures
$\Delta_q$).  Apart from constants, this result is best possible, for
we should expect that about $e^{-P} |\cQ (x)|$ squarefree elements $q\in \cQ(x)$
 would be divisible by no prime $p$ with $\rho(p) \ge 2$,
and for such $q$ we would have $|A_q|=1$ and $\disc(\Delta_q)=1$.
\par
(2) In particular, for almost all~$q\in\cQ$, the discrepancy bound
implies that the smallest element of~$A_q$ is $\ll qe^{-P/12}$ (if we
identify $\Zz/q\Zz$ with $\{0,\ldots, q-1\}$). In the case of roots of
polynomial congruences, such a result was recently proved by Crişan
and Pollack~\cite{c-p}.
\end{remark}
\par
\medskip
\par
Theorem \ref{thm1} applies to Hooley's result on roots of a
polynomial modulo all integers.  By the Chebotarev
Density Theorem, any irreducible polynomial of degree $d\ge 2$ has $d$
roots modulo~$p$ for a positive density of primes, so that Assumption
\ref{Ass1} holds, and further
$$
\sum_{\substack{p\le x\\ \rho(p)\ge 2}}\frac{1}{p} \ge c(d) \log \log
x
$$
for some constant $c(d) \ge \frac{1}{d!}$ (so that the right-hand side
of the estimate in Theorem~\ref{thm1} is of size $(\log x)^{-c}$ for
some~$c>0$).  We shall give further applications along these lines in
Section~\ref{sec-appli}.  Our version is somewhat different from
Hooley's, and we shall compare and contrast these in
Section~\ref{sec-hooley}.  The generality of Theorem \ref{thm1}
indicates that Hooley's equidistribution~\cite{hooley} is a
manifestation of the mixing properties of the Chinese Remainder
Theorem rather than the arithmetic structure of roots of polynomial
congruences.

\smallskip

We shall generalize and strengthen Theorem~\ref{thm1} in a few
different ways.  Firstly, we consider subsets of~$(\Zz/p^v\Zz)^n$ for
fixed~$n\geq 1$.  Here a key issue is to find the correct
generalization of the condition that $\rho(p)\geq2$ for many primes
that arose naturally in the one-dimensional case.  Secondly, we shall
consider equidistribution of the measures $\Delta_q$ when $q$ is
restricted to integers in $\cQ$ with exactly $k$ distinct prime
factors.  Under mild hypotheses on $\rho(p)$, we shall show that in a
wide range of $k$, the discrepancy of the measures $\Delta_q$ is
typically small.  Under more restrictive hypotheses (when $\rho(p)$ is
large for $p\in \cQ$) we show that $\disc(\Delta_q)$ is typically
small already for numbers with two prime factors.

We begin by introducing the higher dimensional setting, and
formulating an analogue of Theorem \ref{thm1}.  Throughout, the
dimension $n$ will be considered fixed, so that implicit constants
will be allowed to depend on $n$, but we shall display the
dependencies on all other parameters.  For each prime power~$p^v$,
let~$A_{p^v} \subset(\Zz/p^v\Zz)^n$ be a set of $n$-tuples of residue
classes modulo~$p^v$.  As before, we put $\rho(p^v) = |A_{p^v}|$ and
allow $A_{p^v}$ to be the empty set (so that $\rho(p^v)=0$) for some
prime powers.  For a positive integer~$q$, we
let~$A_q\subset (\Zz/q\Zz)^n$ be the set of residue
classes~$x\mods{q}$ such that~$x\mods{p^v}\in A_{p^v}$ for all prime
powers~$p^v \Vert q$.  Let $\rho(q)$ denote the size of $A_q$, which
again is a multiplicative function. We let $\cQ$, $\cQ(x)$, $\cQ_k$,
and $\cQ_k(x)$ have their earlier meanings, and will be working as
before under Assumption \ref{Ass1}.
\par
\medskip 
\par
For~$a=(a_1,\ldots,a_n)\in \Rr^n$, we write
$$
\{a\}=(\{a_1\},\ldots,\{a_n\})\in (\Rr/\Zz)^n. 
$$
 We  define a probability measure~$\Delta_q$ on~$(\Rr/\Zz)^n$ by
$$
\Delta_q= \frac{1}{\rho(q)}\sum_{a \in A_q}\delta_{\{\tfrac{a}{q}\}}.
$$
The closeness of $\Delta_q$ to the uniform measure is quantified by
means of the \emph{box discrepancy}
$$
\disc(\Delta_q)=\sup_{B\subset (\Rr/\Zz)^n}
|\Delta_q(B)-\vol(B)|
$$
where the supremum is taken over all boxes $B$ in $(\Rr/\Zz)^n$, and
$\vol(B)$ denotes the usual volume (Lebesgue measure) of the box.
Here, by a box in $(\Rr/\Zz)^n$, we mean the projection modulo $\Zz^n$
of a closed box (that is, a product of closed intervals) in $\Rr^n$
with all side lengths $\leq 1$.


Suppose there is a fixed affine hyperplane~$H$ defined over $\Zz$ such
that the elements in $A_{p^v}$ all lie in the reduction of~$H$
modulo~$p^v$ for all $p\in\cQ$. Then for $q\in \cQ$, the elements in
$A_q$ would also lie in this hyperplane, so that the measures
$\Delta_q$ will be supported in a translate of a proper subtorus of
$(\Rr/\Zz)^n$.  This situation prevents equidistribution; it
generalizes the case~$n=1$, where an affine hyperplane is a single
point, so that concentration in a single hyperplane corresponds to the
case when $\rho(p)\leq 1$ for most primes $p$.  Our generalization of
Theorem~\ref{thm1} establishes that if the sets $A_p$ do not
concentrate on hyperplanes for a positive density of primes $p$, then
$\Delta_q$ is close to the uniform measure (i.e., has small
discrepancy) for most moduli $q$.

To state this precisely, we need one further definition. Given a
prime~$p$ in $\cQ$, define
$$
\lambda(p)= \max_{\substack{H\subset (\Zz/p\Zz)^n\\H\text{ affine
      hyperplane}}} |H\cap A_{p}|, 
$$
where an affine hyperplane~$H\subset  (\Zz/p\Zz)^n$ is a subset of the form
$$
 H=\{x\in (\Zz/p\Zz)^n\,\mid\, h_1x_1+\cdots+h_nx_n=a\}
$$
for some~$a\in \Zz/p \Zz$
and~$(h_i)\in(\Zz/q\Zz)^n\setminus \{(0,\ldots,0)\}$.

\begin{theorem}\label{thm2}
  Suppose that \textup{Assumption \ref{Ass1}} holds, and that $x$ is
  large in terms of $\alpha$ and $x_0$.  Then, there is a constant
  $C(n)$ depending only on $n$ such that
  $$
  \frac{1}{|\cQ(x)|} \sum_{q\in\cQ(x)}\disc(\Delta_q)\le \frac{C(n)}{
    \alpha} \exp\Big( - \frac{1}{3} \sum_{\substack{ p\le x \\ \rho(p)
      \ge 1}} \Big(1 -\frac{\lambda(p)}{\rho(p)}\Big) \frac 1p \Big).
  $$
\end{theorem}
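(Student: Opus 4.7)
My plan is to generalize the approach of Theorem~\ref{thm1} to dimension $n$: reduce the discrepancy to bounds on Fourier coefficients via the Erdős--Turán--Koksma inequality, factor those coefficients using the Chinese Remainder Theorem, and combine a local Parseval estimate with an Euler product argument. The exponent $\tfrac{1}{3}$ should emerge by bootstrapping a second-moment bound into a first-moment bound via Hölder: since $|\widehat{\Delta_q}(h)|\leq 1$, one has $\expect_q|f|^3\leq \expect_q|f|^2$, hence $\expect_q|f|\leq (\expect_q|f|^3)^{1/3}\leq (\expect_q|f|^2)^{1/3}$.

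First, the $n$-dimensional Erdős--Turán--Koksma inequality gives, for any integer $H\geq 1$,
$$\disc(\Delta_q)\;\ll_n\;\frac{1}{H}+\sum_{\substack{h\in\Zz^n\\0<\|h\|_\infty\leq H}}\frac{|\widehat{\Delta_q}(h)|}{r(h)},$$
with $r(h)=\prod_i\max(1,|h_i|)$ and $\widehat{\Delta_q}(h)=\rho(q)^{-1}\sum_{a\in A_q}e(h\cdot a/q)$. Averaging over $q\in\cQ(x)$ reduces the task to bounding $\expect_q|\widehat{\Delta_q}(h)|$ for each fixed nonzero $h$. Through the CRT isomorphism $\Zz/q\Zz\cong\prod_{p^v\Vert q}\Zz/p^v\Zz$, the Fourier coefficient factorizes as
$$\widehat{\Delta_q}(h)=\prod_{p^v\Vert q}\widehat{\Delta_{p^v}}\bigl(h\,u_{p^v}(q)\bigr),\qquad u_{p^v}(q)\equiv(q/p^v)^{-1}\!\!\pmod{p^v},$$
and each local factor is trivially $1$ when $p\mid h$; the saving comes from primes $p\nmid h$.

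For the key local estimate at a prime $p\in\cQ$ and $h'\not\equiv 0\pmod p$, decomposing $A_p$ along the level sets of the linear form $x\mapsto h'\cdot x\bmod p$ produces a counting function $N_{h'}(t)=|\{a\in A_p:h'\cdot a\equiv t\pmod p\}|\leq\lambda(p)$, and applying Parseval on $\Zz/p\Zz$ to $t\mapsto N_{h'}(t)$ yields
$$\frac{1}{p-1}\sum_{c\in(\Zz/p\Zz)^*}|\widehat{\Delta_p}(h'c)|^2\;=\;\frac{p\sum_t N_{h'}(t)^2-\rho(p)^2}{(p-1)\rho(p)^2}\;\leq\;\frac{\lambda(p)}{\rho(p)}.$$
Assembling via multiplicativity and exploiting the near-equidistribution of $q/p\bmod p$ as $q$ ranges over $\cQ(x)$ with $p\mid q$ (so that averaging over $q$ realizes the average over units appearing above), one obtains an Euler product bound
$$\frac{1}{|\cQ(x)|}\sum_{q\in\cQ(x)}|\widehat{\Delta_q}(h)|^2\;\ll\;\prod_{\substack{p\leq x,\,p\nmid h\\\rho(p)\geq 1}}\!\!\left(1-\frac{1-\lambda(p)/\rho(p)}{p}\right)\;\ll\;\exp\!\left(-\sum_{\substack{p\leq x\\\rho(p)\geq 1}}\frac{1-\lambda(p)/\rho(p)}{p}\right),$$
where Assumption~\ref{Ass1} provides the necessary lower bound on $|\cQ(x)|$ and controls sieve errors. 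The Hölder bootstrap then supplies the $\tfrac{1}{3}$ factor in the exponent; summing over $h$ with weight $r(h)^{-1}$ and choosing $H$ slightly larger than the exponential of one-third of the exponent completes the proof.

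The main obstacle is the Euler product assembly step: showing rigorously that averaging $|\widehat{\Delta_q}(h)|^2$ over $q\in\cQ(x)$ is well approximated by the product of local averages over units, with error terms small enough to absorb. This requires controlled equidistribution of the residues $q/p\bmod p$ among $q\in\cQ(x)$ with $p\mid q$, handled via sieve methods or mean-value theorems for multiplicative functions, in the spirit of Hooley's treatment of the one-dimensional case. Contributions from moduli with large square factors are sparse and negligible; polylogarithmic losses from the Erdős--Turán--Koksma tail and from the gcd-dependence of the exponent on $h$ are absorbed into the constant $C(n)$.
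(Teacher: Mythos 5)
You have correctly identified the Erd\H os--Tur\'an(--Koksma) reduction, the twisted multiplicativity of the Weyl sums $W(h;q)$ under the Chinese Remainder Theorem, and a local Parseval bound that is essentially the paper's Lemma~\ref{lm-w}~(2). But your proposed assembly step has a genuine gap, which you yourself flag: you want a second-moment estimate
$$\frac{1}{|\cQ(x)|}\sum_{q\in\cQ(x)} |W(h;q)|^2 \;\ll\; \exp\Big(-\sum_{\substack{p\le x\\ \rho(p)\ge1}}\Big(1-\frac{\lambda(p)}{\rho(p)}\Big)\frac1p\Big),$$
and to prove it you would need the residues $u_{p^v}(q)=(q/p^v)^{-1}\bmod p^v$ to equidistribute (with the right error terms) as $q$ ranges over $\cQ(x)$ with $p^v\Vert q$. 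This is not a technicality: the integrand $|W(h;q)|^2=\prod_{p^v\Vert q}|\widehat{\Delta_{p^v}}(h\,u_{p^v}(q))|^2$ is \emph{not} a multiplicative function of $q$ (each local factor depends on the complementary part of $q$), so the sum over $q$ does not literally factor into an Euler product, and a two-sided equidistribution statement would be needed. The paper never proves such a statement; it avoids it entirely.

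The paper's route is genuinely different at this point. One factors $q=rs$ into a $z$-smooth part $s$ and a $z$-rough part $r$ (with $z=x^{1/P}$), uses the multiplicativity $W(h;rs)=W(\bar r h;s)W(\bar s h;r)$, and then \emph{trivially bounds $|W(\bar s h;r)|\le 1$}. This discards the rough factor, so only the smooth factor $|W(\bar r h;s)|$ remains; splitting $r$ into residue classes $\bar a\bmod s$ turns the $r$-sum into a one-sided sieve count ($\ll \tfrac{x/s}{\varphi(s)\log z}$ per class) times an average of $|W(ah;s)|$ over $a\bmod s$, which is exactly where the Cauchy--Schwarz/Parseval bound enters (Lemma~\ref{lem11}). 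This only requires an \emph{upper} bound on the number of $r$ in each class, not equidistribution of $q/p^v \bmod p^v$ — precisely sidestepping your obstacle. The contribution of $q$ with a large smooth part ($s>x^{1/3}$) is handled separately by a smooth-number estimate (Lemma~\ref{lemQ2}), and the whole thing is compared against the lower bound $|\cQ(x)|\gg \tfrac{\alpha x}{\log x}\prod_{p\in\cQ}(1+1/p)$ from Lemma~\ref{lemQ1}.

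Two further points. First, the paper gets the factor $\tfrac12$ in the exponent from Cauchy--Schwarz over $a$ (yielding $\sqrt{\lambda(p)/\rho(p)}$ in the Euler product) followed by $1-\sqrt t\ge (1-t)/2$, and then deliberately relaxes $\tfrac12$ to $\tfrac13$ to absorb the factors $P$ and $P^{n+1}$ coming from the choice $H=e^P$; your Jensen/H\"older step $\expect|f|\le(\expect|f|^2)^{1/3}$ would reach $\tfrac13$ directly but then leave no room to absorb the $(\log H)^n\asymp P^n$ loss. Second, the claim that these ``polylogarithmic losses ... are absorbed into the constant $C(n)$'' is incorrect as stated, since $\log H$ grows with $x$; the absorption must come from further weakening the exponent (which is what the paper does, using $P^{n+1}e^{-P/2}\ll_n e^{-P/3}$ for $P\ge 10$). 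Filling the gap in your assembly step would likely reproduce something close to the paper's rough/smooth split in disguise, so the proposal should be regarded as incomplete rather than an alternative proof.
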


\begin{remark}
  Consider the case~$n=1$. Then we have $\lambda(p) =1$ whenever
  $\rho(p) \ge 1$, and thus
  $$
  \sum_{\substack{p\leq x \\ \rho(p)\geq1}}
  \Bigl(1-\frac{\lambda(p)}{\rho(p)}\Bigr) \frac 1p \ge \frac 12
  \sum_{\substack{p\leq x \\ \rho(p)\geq2}} \frac 1p,
  $$
  and Theorem \ref{thm1} is seen to be a special case of Theorem
  \ref{thm2}.
\end{remark}

For any $n$, given at most $n$ points in $(\Zz/p\Zz)^n$, we may always
find an affine hyperplane containing all of them.  But given $n+1$
points we may expect that they are ``in general position'', in the
sense that there is no affine hyperplane that contains all of them.
Thus, roughly speaking, Theorem \ref{thm2} says that if there are many
primes $p$ with $A_p$ in general position, and containing at least
$n+1$ elements, then for almost all $q \in \cQ$, the measures
$\Delta_q$ are close to equidistribution.
\par 

\smallskip 
By imposing a stronger (but still mild) hypothesis, we can obtain
equidistribution of $\Delta_q$ on average, when $q$ is restricted to
integers with a given number of prime factors.

\begin{theorem} 
\label{thm3} 
Suppose that \textup{Assumption \ref{Ass1}} holds, and that $x$ is
large in terms of $x_0$ and $\alpha$.  Suppose that $0< \delta \le 1$
is such that
\begin{equation} \label{eq-delta}
  \sum_{\substack{ p\le x \\ p\in {\mathcal Q}}} \Big(1- \frac{\lambda(p)}{\rho(p)} \Big) \frac 1p \ge \delta \log \log x. 
\end{equation} 
Then uniformly in the range
$$ 
\frac{20(6+n)}{\delta} \log\Bigl( \frac{20(6+n)}{\delta}\Bigr) \le k
\le \exp\Big( \sqrt{\frac{\alpha \delta \log \log x}{20(6+n)}}\Big)
$$ 
we have 
$$ 
\frac{1}{|\cQ_k(x)|} \sum_{\substack{ q\leq x \\ q\in
    \cQ_k}}\disc(\Delta_q) \ll \frac{1}{\alpha} \Big( e^{-\delta k/18}
+ (\log x)^{-\alpha \delta/18}\Big).
$$ 
\end{theorem}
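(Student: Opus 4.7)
The plan is to adapt the Fourier-analytic approach underlying Theorems~\ref{thm1}--\ref{thm2} to the restricted family $\cQ_k(x)$, replacing the first-moment Dirichlet series argument used there by a second-moment bound in $q$ combined with a Sathe--Selberg asymptotic for multiplicative functions on integers with exactly $k$ distinct prime factors. The Erd\H{o}s--Tur\'an--Koksma inequality gives, for any $H\ge 1$,
$$
\disc(\Delta_q)\ll_n \frac{1}{H}+\sum_{0<\|h\|_\infty\le H}\frac{1}{r(h)}|\hat\Delta_q(h)|,\qquad r(h):=\prod_{i=1}^n\max(1,|h_i|),
$$
and by the Chinese Remainder Theorem the Fourier coefficient factorizes as $\hat\Delta_q(h)=\prod_{p^v\Vert q}\theta_{p^v}(c_{p^v}h)$ with $c_{p^v}:=(q/p^v)^{-1}\bmod p^v$ and
$$
\theta_{p^v}(h'):=\frac{1}{\rho(p^v)}\sum_{a\in A_{p^v}}e\Bigl(\frac{h'\cdot a}{p^v}\Bigr).
$$
The task thus reduces to averaging products of local factors $|\theta_{p^v}(c_{p^v}h)|$ over $q\in\cQ_k(x)$.

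The link between these local factors and the hypothesis~\refs{eq-delta} comes from a Plancherel identity on the fibres of the linear form $x\mapsto h\cdot x$: for $p\in\cQ$ and nonzero $h\in(\Zz/p\Zz)^n$,
$$
\sum_{c=1}^{p-1}|\theta_p(ch)|^2=\frac{p}{\rho(p)^2}\sum_{c\in\Zz/p\Zz}|A_p\cap\{h\cdot x=c\}|^2-1\le \frac{p\lambda(p)}{\rho(p)}-1,
$$
so on average over the scaling $c\in(\Zz/p\Zz)^\times$ the local factor satisfies $|\theta_p(ch)|^2\le \lambda(p)/\rho(p)+O(1/p)$. As $q=p_1\cdots p_k$ varies in $\cQ_k(x)$ the twists $c_{p_i}$ run through units in a controlled way, which suggests applying Cauchy--Schwarz in the $q$-variable to get
$$
\sum_{q\in\cQ_k(x)}|\hat\Delta_q(h)|\le |\cQ_k(x)|^{1/2}\,S(h)^{1/2},\qquad S(h):=\sum_{q\in\cQ_k(x)}|\hat\Delta_q(h)|^2,
$$
reducing the problem to proving that $S(h)/|\cQ_k(x)|^2$ enjoys exponential decay in $k$.

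The bound on $S(h)$ would come from a Sathe--Selberg / Landau-type asymptotic for the multiplicative function $q\mapsto \prod_{p\mid q}\lambda(p)/\rho(p)$, restricted to squarefree products of $k$ primes in $\cQ$ (the contribution of non-squarefree elements of $\cQ_k$ and of prime powers with $v\ge 2$ being a $1/p$ perturbation absorbed into the final constant). One should obtain
$$
S(h)\ll \frac{x}{\log x\,(k-1)!}\Bigl(\sum_{\substack{p\le x\\ p\in \cQ}}\frac{\lambda(p)}{p\rho(p)}\Bigr)^{k-1},\qquad
|\cQ_k(x)|\gg \frac{x}{\log x\,(k-1)!}\Bigl(\sum_{\substack{p\le x\\ p\in\cQ}}\frac{1}{p}\Bigr)^{k-1},
$$
and the hypothesis~\refs{eq-delta} combined with the trivial bound $\sum_{p\le x,\,p\in\cQ}1/p\le \log\log x+O(1)$ converts the ratio of the two expressions into $\le (1-\delta+o(1))^{k-1}\le e^{-\delta(k-1)}$. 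Summing over $0<\|h\|_\infty\le H$ against $1/r(h)$ and picking $H$ to balance the $1/H$ tail against the $h$-sum produces the two competing terms $e^{-\delta k/18}$ and $(\log x)^{-\alpha\delta/18}$ in the stated bound. The main obstacle is obtaining the Sathe--Selberg asymptotic \emph{uniformly} in the full range of $k$: the upper endpoint $k\le\exp(\sqrt{\alpha\delta\log\log x/(20(6+n))})$ is exactly the threshold at which the main term in the multiplicative-function estimate still dominates the secondary terms, and the explicit constant $20(6+n)$ encodes the Cauchy--Schwarz loss of a factor~$2$, the dimension-$n$ summation $\sum_{\|h\|_\infty\le H}1/r(h)\ll (\log H)^n$, and the perturbations absorbed from prime powers; tracking these sharp constants is the technically delicate part of the argument.
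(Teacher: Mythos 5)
Your plan is a recognisable cousin of the paper's argument — Erd\H os--Tur\'an, twisted multiplicativity of the Weyl sums, and a Plancherel bound in terms of $\lambda(p)/\rho(p)$ all appear — but the way you organise the averaging is genuinely different, and the place where the real work happens is precisely the place you have left as an assertion.

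The paper never works with a second moment over the modulus $q$. Instead it factors each $q\in\cQ_k(x)$ as $q=rs$ with $s$ the $x^{1/(4k)}$-smooth part and $r$ the rough part, and for \emph{fixed} $s$ it averages $\disc(\Delta_{rs})$ over $r$. The twisted multiplicativity writes $W(h;rs)=W(\bar r h;s)W(\bar s h;r)$; the rough factor is bounded trivially by $1$, and the sieve shows that as $r$ ranges with $p\mid r\Rightarrow p>z$, the residue $\bar r\bmod s$ falls (with roughly equal weight) into each reduced class. That is the \emph{only} mechanism by which the pointwise quantity $|W(\bar r h;s)|$ gets replaced by the $L^2$-average over $a\bmod s$, and it is then the Plancherel identity that converts that average into $\lambda/\rho$ (Lemmas~\ref{lem11} and~\ref{lem4.2}). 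Combined with Lemma~\ref{lem3.2}, which controls how many $q\in\cQ_k(x)$ have many prime factors $>z$, this produces Proposition~\ref{lm-small}, from which Theorem~\ref{thm3} is a short deduction.

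Your plan instead applies Cauchy--Schwarz in $q$ and reduces to the bound
$$
S(h)=\sum_{q\in\cQ_k(x)}|\hat\Delta_q(h)|^2\ \ll\ \frac{x}{\log x\,(k-1)!}\Bigl(\sum_{\substack{p\le x\\ p\in\cQ}}\frac{\lambda(p)}{p\,\rho(p)}\Bigr)^{k-1},
$$
which you describe as ``a Sathe--Selberg / Landau-type asymptotic for the multiplicative function $q\mapsto\prod_{p\mid q}\lambda(p)/\rho(p)$.'' This is where the gap is. The summand $|\hat\Delta_q(h)|^2=\prod_{p^v\Vert q}|\theta_{p^v}(c_{p^v}h)|^2$ is \emph{not} the multiplicative function $\prod_{p\mid q}\lambda(p)/\rho(p)$; it is only bounded by it \emph{on average over the twists} $c_{p^v}$, via the Plancherel identity. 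The twists $c_{p^v}=(q/p^v)^{-1}\bmod p^v$ are coupled (all determined by the full factorisation of $q$), so one cannot simply substitute the local average for each local factor and then invoke a Sathe--Selberg estimate for a genuine multiplicative function. One must actually \emph{prove} that, as $q$ ranges over $\cQ_k(x)$, these twist parameters equidistribute sufficiently to justify the substitution — and this is exactly what the paper's smooth/rough decomposition and the sieve step in Lemma~\ref{lem4.2} accomplish. Your sketch for $S(h)$ is effectively asserting the conclusion of that argument without giving it. A concrete symptom: for a fixed nonzero $h$ and a fixed squarefree $q$, nothing prevents $|\hat\Delta_q(h)|$ from being $\gg 1$ even when $\prod_{p\mid q}\lambda(p)/\rho(p)$ is tiny; the gain must come from the $q$-average, and that average has to be executed honestly.

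Two further, smaller issues. First, the upper range $k\le\exp\bigl(\sqrt{\alpha\delta\log\log x/(20(6+n))}\bigr)$ extends well beyond $\log\log x$, where a typical $q\in\cQ_k(x)$ has almost all its prime factors below $x^{1/(4k)}$; the paper needs Lemma~\ref{lem3.2} (a count of integers in $\cQ_k(x)$ with at least $\kappa$ large prime factors) to make the argument go through in that regime, and nothing in your plan plays that role — picking out ``one largest prime'' to drive the equidistribution of twists is not obviously adequate when nearly all primes are small. Second, the reduction of~\eqref{eq-delta} to exponential decay in $k$ is done in the paper through $\tcP(x)/\cP(x)\le e^{-\delta/3}$ (using $1-\sqrt t\ge(1-t)/2$ after a Cauchy--Schwarz inside the local sum), whereas your Cauchy--Schwarz-in-$q$ would produce the analogous $1/2$ loss in the exponent at a different point; this is harmless for the shape of the result, but it means the claim that your constant $20(6+n)$ ``encodes'' the same losses as the paper's does not hold up to inspection, since the two Cauchy--Schwarz steps live in different variables and are not interchangeable without re-doing the bookkeeping.

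In short: the framework you set up is reasonable and related to the paper's, but the $S(h)$ estimate — the one step you do not prove — is not a consequence of Sathe--Selberg for multiplicative functions, and filling it in essentially requires the smooth/rough decomposition and sieve equidistribution that constitute the paper's Lemmas~\ref{lem4.2} and~\ref{lem3.2}. As written, the proposal has a real gap at its centre.
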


\begin{remark}
  (1) If we think of $\delta$ as a fixed positive constant, then
  Theorem \ref{thm3} shows that for most $q \in \cQ_k(x)$ one has
  equidistribution of $\Delta_q$ so long as $k \to \infty$
  (arbitrarily slowly with $x$) and provided
  $k \le \exp(c \sqrt{\log \log x})$ for some $c>0$.  A condition like
  $k\to \infty$ is necessary to guarantee that $A_q$ has many points,
  which is essential for equidistribution.
  \par
  (2) Although ``typical'' integers in $\cQ$ have on the order of
  $\log \log x$ prime factors, and larger values of $k$ occur very
  rarely, it would be interesting to extend the result to larger
  values of~$k$, especially up to $k\leq (\log x)^{c}$ for some~$c>0$.
\end{remark}
\par
\smallskip
\par
Our last result provides equidistribution for $\Delta_q$ for most $q$
in $\cQ_k$, for any \emph{fixed} $k\ge 2$, provided the sets $A_p$ are
known to be large for most $p\in \cQ$.

\begin{theorem}\label{thm4}
  Suppose that \textup{Assumption \ref{Ass1}} holds, and that $x$ is
  large in terms of $x_0$ and $\alpha$.  Let~$\delta>0$ be such that
  $1/\log \log x \le \delta \le1/e$ and
  \begin{equation}\label{eq-series}
    \sum_{p\in\cQ(x)}\frac{1}{p}
    \frac{\lambda(p)}{\rho(p)}  \le \delta \sum_{p\in \cQ(x)} \frac 1p.   
  \end{equation}
 Then, uniformly in the range $2\le k \le \alpha \delta \log \log x$,   
   \begin{equation*}\label{eq-goal0bis}
     \frac{1}{|\cQ_k(x)|}\sum_{\substack{ q\in    \cQ_k(x)}}\disc(\Delta_q) \ll \frac{1}{\alpha} \delta^{(k-1)/10}.
  \end{equation*}
\end{theorem}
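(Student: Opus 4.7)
The plan is to follow the framework used for Theorems~\ref{thm1}--\ref{thm3}. First, the multi-dimensional Erdős-Turán-Koksma inequality reduces $\disc(\Delta_q)$ to a weighted Fourier sum
$$ \disc(\Delta_q) \ll \frac{1}{H} + \sum_{0 < \|h\|_\infty \le H}\frac{1}{r(h)}\bigl|\hat\Delta_q(h)\bigr|, \qquad r(h)=\prod_{i=1}^n \max(1,|h_i|), $$
where $\hat\Delta_q(h)=\rho(q)^{-1}\sum_{a\in A_q}e(h\cdot a/q)$ and $H$ is to be chosen as a suitable power of $x$. By the Chinese Remainder Theorem, $\hat\Delta_q(h)=\prod_{p^v\Vert q}c(p^v,h)$ with $c(p^v,h)=\rho(p^v)^{-1}\sum_{a\in A_{p^v}}e(h\cdot a/p^v)$, so averaging $|\hat\Delta_q(h)|$ over $q\in\cQ_k(x)$ becomes an average of a product over unordered $k$-tuples of prime powers.

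The local bound linking $c(p,h)$ to $\mu(p):=\lambda(p)/\rho(p)$ comes from slicing $A_p$ by the fibres of the linear form $h\cdot x\bmod p$. If $P_h(j)=|A_p\cap\{x:h\cdot x\equiv j\pmod p\}|/\rho(p)$, then $\sum_j P_h(j)=1$ and $\max_j P_h(j)\le\mu(p)$, so $\sum_j P_h(j)^2\le\mu(p)$. By Parseval on $\Zz/p\Zz$, for any $h\not\equiv 0\pmod p$,
$$ \frac{1}{p}\sum_{t\in\Zz/p\Zz}|c(p,th)|^2 \le \mu(p). $$
Although $|c(p,h)|^2$ has no useful pointwise bound, it is controlled by $\mu(p)$ in $L^2$ on every one-dimensional subspace, and this is the single mechanism through which the hypothesis \eqref{eq-series} is harnessed.

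I would then apply Cauchy-Schwarz (or a higher even moment, if needed to drive the exponent down) over $q$, reducing to an expression of the form
$$ \frac{1}{|\cQ_k(x)|}\sum_{q\in\cQ_k(x)}\sum_{0<\|h\|_\infty\le H}\frac{1}{r(h)^2}|\hat\Delta_q(h)|^2, $$
interchange the sums, and exploit the multiplicativity of $|\hat\Delta_q(h)|^2$. For each prime $p_i\Vert q$ with $h$ non-vanishing mod $p_i$, the local Parseval bound contributes a factor of size $\mu(p_i)$ after summing the $h$ in lines through the origin mod $p_i$. A standard enumeration of $\cQ_k(x)$ (using Assumption~\ref{Ass1} to get $|\cQ_k(x)|\gg x(\alpha\log\log x)^{k-1}/((k-1)!\log x)$), together with \eqref{eq-series} rewritten as $\sum_{p\in\cQ(x)}\mu(p)/p\le\delta\sum_{p\in\cQ(x)}1/p$, then gives a product with $k-1$ small factors of average size $\delta$, hence a bound of shape $\delta^{k-1}$ inside the square, i.e.\ of shape $\delta^{(k-1)/2}$ after Cauchy-Schwarz.

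The technical heart of the proof is passing from the naive exponent $(k-1)/2$ down to the stated $(k-1)/10$, uniformly in $k$ and without losing factors of the form $C^k$. This requires careful treatment of two exceptional contributions: (i) frequencies $h$ for which some $p_i\mid q$ divides $\gcd(h_1,\ldots,h_n)$, where only the trivial bound $|c(p_i,h)|\le 1$ is available, forcing a restriction to $h$ that are primitive with respect to most primes dividing $q$; and (ii) moduli $q\in\cQ_k(x)$ containing an atypically small prime or an atypically large value of $\mu(p)$, which must be isolated and bounded trivially. The combinatorial constraint $k\le\alpha\delta\log\log x$ is exactly what guarantees that the $(C\delta)^{k-1}$ losses from enumerating $k$-tuples of primes and from these exceptional sets remain below the main $\delta^{(k-1)/10}$ savings; the factor $10$ in the denominator reflects the combined cost of the Erdős-Turán truncation at $H$, of Cauchy-Schwarz, and of the $L^2$-rather-than-pointwise nature of the local bound. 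This bookkeeping, rather than any single new analytic ingredient, is where the main effort lies.
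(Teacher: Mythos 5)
Your outline captures the two genuine ingredients — the Erdős--Turán reduction, and the Parseval-type bound $\tfrac{1}{p}\sum_{t}|c(p,th)|^2\le\lambda(p)/\rho(p)$ coming from slicing $A_p$ by fibres of $h\cdot x\bmod p$, which is exactly Lemma~\ref{lm-w}(2) — but the route you propose from there has two gaps that the paper's argument is specifically designed to avoid.

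First, the factorization $\hat\Delta_q(h)=\prod_{p^v\Vert q}c(p^v,h)$ is not correct. The Chinese Remainder Theorem gives only a \emph{twisted} multiplicativity, $W(h;q_1q_2)=W(\bar q_1 h;q_2)\,W(\bar q_2 h;q_1)$ (Lemma~\ref{lm-w}(1)): the local factor at $p^v$ depends on $h$ rescaled by the inverse of the complementary modulus, and hence on $q$, not just on $p^v$. This is harmless for the $L^2$ identity on a single prime, but it breaks the interchange of sums you propose: after interchanging $\sum_q$ and $\sum_h$, the product $\prod_{p^v\Vert q}|c(p^v,\cdot)|^2$ is not evaluated at the same frequency for each $q$, so you cannot simply factor the $q$-average into a product of local $\delta$-sized averages. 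The paper resolves this by not applying Cauchy--Schwarz over $q$ at all. Instead it factors each $q=rs$ into its ``smooth'' part $s$ (prime factors $\le z$) and ``rough'' part $r$ (prime factors $>z$), bounds $|W(\bar s h;r)|$ trivially by~$1$, and then groups $r$ into residue classes $a\bmod s$ so that the smooth-side factor $W(\bar r h;s)$ becomes $W(ah;s)$; Cauchy--Schwarz is applied over $a\bmod s$ (Lemma~\ref{lem11}), which is precisely what puts the $L^2$ bound into play while absorbing the twist. Without this rough/smooth decomposition there is no obvious way to neutralize the twisted argument.

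Second, even granting a clean multiplicative structure, you do not explain how the $\cQ_k$-constraint is enforced. The paper's Lemma~\ref{lem4.2} produces, after sieving, a factor $x/(\varphi(s)\log z)$ for the count of admissible $r$; the restriction to $\omega(q)=k$ then shows up as a constraint $\omega(s)\le k-1$ when summing over $s$, against which one compares the lower bound $|\cQ_k(x)|\gg \tfrac{\alpha x}{\log x}\tfrac{\cP(x)^{k-1}}{(k-1)!}$ from Lemma~\ref{lem3.1}. Crucially, the paper does not prove Theorem~\ref{thm4} directly: it first establishes the general Proposition~\ref{lm-small}, whose conclusion is phrased in terms of the ratio $\tcP(x)/\cP(x)$ where $\tcP(x)=\sum_{p\in\cQ(x)}\sqrt{\lambda(p)/\rho(p)}/p+3$, and then deduces Theorem~\ref{thm4} in a few lines by a Cauchy--Schwarz in $p$ applied to hypothesis \eqref{eq-series}, which gives $\tcP(x)/\cP(x)\le\delta^{1/3}$. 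This intermediate $\tcP$ — a $\sqrt{\lambda/\rho}$-weighted, not $\lambda/\rho$-weighted, quantity — is the natural object because it is what comes out of Cauchy--Schwarz in Lemma~\ref{lem11}. Your sketch skips directly to $\delta$-sized factors and then hopes to recover the exponent $(k-1)/10$ by ``bookkeeping''; in fact the exponent comes from the chain $\delta\mapsto\delta^{1/3}$ (Cauchy--Schwarz in $p$) and $(\tcP/\cP)^{(k-1)/3}$ (Proposition~\ref{lm-small}(1)), and the latter itself requires the choice $H=(1+\cP/\tcP)^k$ and a split of the $j$-sum $\sum_{j\le k-1}\tcP^j/j!$ according to whether $k\le2\tcP-1$ or not. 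These are concrete steps, not mere accounting; your proposal does not supply them.
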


The interest in Theorem \ref{thm4} is really for small values of $k$,
since when $k$ is large one may simply use the bounds in Theorem
\ref{thm3}.  If $\delta$ in Theorem \ref{thm4} is close to~$0$, then
we get equidistribution for most $\Delta_q$ already for integers $q$
with $2$ prime factors.  For example, this applies, in the case~$n=1$,
whenever $\rho(p)$ tends to infinity for $p\in \cQ$.

The final remark before closing the introduction section is that
Assumption~\ref{Ass1}, as well as all the estimates in
Theorems~\ref{thm1}, \ref{thm2}, \ref{thm3} and~\ref{thm4} only
involve the sets~$A_p$ and their sizes. In other words, \emph{there is
  no restriction whatsoever} on the choice of the sets~$A_{p^v}$
for~$v\geq 2$.  This should not be surprising because most natural numbers 
are not divisible by many prime powers $p^v$ with $v\ge 2$.  

\subsection*{Outline of the paper}

The next section provides a selection of applications of
Theorem~\ref{thm2}, and compares the results with those
of~\cite{hooley}. Section~\ref{sec-prelim} discusses some
preliminaries, and the proof of Theorem \ref{thm2} (which contains
Theorem \ref{thm1} as a special case) is concluded in
Section~\ref{sec4}.  In Section~\ref{sec-proof} we develop a technical
estimate (Proposition \ref{lm-small}) which is more precise (but more
complicated to state) than Theorems \ref{thm3} and \ref{thm4}, and in
Section~\ref{sec6} we prove them starting from that technical result.
Finally, Section~\ref{sec-remarks} discusses briefly another possible
generalization of our method, which will be the subject of a later
work~\cite{ks}, and an Appendix considers briefly a function field
analogue of conjectures about roots of polynomials congruences modulo
primes.

\subsection*{Acknowledgments}
E.K. was partially supported by a DFG-SNF lead agency program grant
(grant number 200020L\_175755).  K.S. is partially supported through a
grant from the National Science Foundation, and a Simons Investigator
Grant from the Simons Foundation.  This work was carried out while
K.S. was a senior Fellow at the ETH Institute for Theoretical Studies,
whom he thanks for their warm and generous hospitality.
\par
We thank D.R. Heath--Brown and J-P. Serre for useful comments,
P. Pollack for pointing out his paper~\cite{c-p} with V. Crişan and
V. Kuperberg for sending us the note~\cite{vkup}.

\section{Examples and counterexamples}
\label{sec-appli}

In this section, we present some examples of applications of
Theorem~\ref{thm2}, and we discuss the relation of our work
with~\cite{hooley}.

Applications of Theorem~\ref{thm2} are perhaps most interesting when
the sets~$A_q$ can be described globally without reference to the
Chinese Remainder Theorem or the prime factorization of~$q$. For
example, $A_q$ could be the set of solutions of certain equations
(e.g., roots of a fixed polynomial with integral coefficients), or the
set of parameters where a family of equations has a solution (e.g, the
set of squares modulo~$q$), or combinations of these.  Or, for
example, one may restrict the values $q$ to be the norms of ideals in
a given number field $K$.

\subsection{Variations on roots of polynomial
  congruences}\label{sec-pols}

We begin with an application of Theorem \ref{thm2} to roots of
polynomials.  This gives a higher dimensional version of Hooley's
result, and is motivated by a question of
Hrushovski~\cite[Conjecture~4.1]{hrushovski}.

\begin{theorem}\label{thm2.1}
  Let~$d\geq 1$. Let $f\in \Zz[X]$ be a polynomial with $d$ distinct
  complex roots.  For each prime power $p^v$, let $A_{p^v}$ denote the
  subset of $(\Zz/p^v\Zz)^{d-1}$ consisting of points
  $(a,a^2,\ldots,a^{d-1})$ where $a$ runs over the roots of
  $f(x)\equiv 0 \mods {p^v}$.  Then, with the corresponding
  definitions of $\cQ$ and $\Delta_q$, for large $x$ we have
$$
\frac{1}{|\cQ(x)|} \sum_{q\in \cQ(x)} \disc(\Delta_q) \ll_d (\log
x)^{-\frac{1}{(4d)d!}}.
$$ 
\end{theorem}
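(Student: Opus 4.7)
The plan is to apply Theorem~\ref{thm2} directly in dimension $n = d-1$, so the task reduces to computing lower bounds for the constant $\alpha$ in Assumption~\ref{Ass1} and for the sum $\sum (1 - \lambda(p)/\rho(p))/p$ that feeds into its conclusion. First, I would replace $f$ by its squarefree part so that $f$ has degree exactly $d$; this affects at most finitely many of the $A_{p^v}$, which is harmless because Theorem~\ref{thm2} depends on the sets $A_{p^v}$ for $v \geq 2$ only through their sizes. Let $K/\Qq$ denote the splitting field of $f$ and set $G = \Gal(K/\Qq) \subseteq S_d$, so that $|G| \leq d!$. By the Chebotarev density theorem, for every prime $p$ outside a finite bad set, $\rho(p)$ equals the number of fixed points of $\frob_p$ on the $d$ complex roots of $f$; in particular $\rho(p) = d$ precisely when $\frob_p = 1$, which occurs on a set of primes of density $1/|G| \geq 1/d!$, so Assumption~\ref{Ass1} follows for some $\alpha = \alpha(d) > 0$.

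The heart of the argument is the uniform bound $\lambda(p) \leq d - 1$, and this is where the one genuine observation lives. Every affine hyperplane $H \subset (\Zz/p\Zz)^{d-1}$ is the zero set of an equation
$$h_1 x_1 + h_2 x_2 + \cdots + h_{d-1} x_{d-1} = c$$
with $(h_1, \ldots, h_{d-1}) \neq (0, \ldots, 0)$. A tuple $(a, a^2, \ldots, a^{d-1}) \in A_p$ lies on $H$ if and only if $a$ satisfies $h_1 a + h_2 a^2 + \cdots + h_{d-1} a^{d-1} = c$; the left-hand side is a nonzero polynomial in $a$ of degree at most $d - 1$, so at most $d - 1$ such $a$ exist in $\Zz/p\Zz$. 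Consequently, for every prime where $f$ splits completely we have $\rho(p) = d$ and $1 - \lambda(p)/\rho(p) \geq 1/d$.

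It then remains only bookkeeping. Using the Mertens--Chebotarev estimate
$$\sum_{\substack{p \leq x \\ \frob_p = 1}} \frac{1}{p} = \frac{1}{|G|} \log \log x + O_d(1) \geq \frac{1}{d!} \log \log x + O_d(1),$$
I would deduce
$$\sum_{\substack{p \leq x \\ \rho(p) \geq 1}} \Bigl(1 - \frac{\lambda(p)}{\rho(p)}\Bigr) \frac{1}{p} \geq \frac{1}{d \cdot d!} \log \log x + O_d(1),$$
and substitution into Theorem~\ref{thm2} then yields an average discrepancy bound of order $\ll_d (\log x)^{-1/(3 d \cdot d!)}$, which comfortably beats the claimed $(\log x)^{-1/((4d) d!)}$. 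The only real content beyond Theorem~\ref{thm2} is therefore the Vandermonde-style observation that the monomial curve $a \mapsto (a, a^2, \ldots, a^{d-1})$ meets any affine hyperplane of $(\Zz/p\Zz)^{d-1}$ in at most $d - 1$ points; once that is in hand, the application is essentially automatic.
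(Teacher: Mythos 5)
Your proof is correct and follows essentially the same route as the paper: apply Theorem~\ref{thm2} with $n = d-1$, use Chebotarev for Assumption~\ref{Ass1} and the lower bound on $\sum(1-\lambda(p)/\rho(p))/p$ via the completely split primes, and bound $\lambda(p) \le d-1$ by the Vandermonde-style observation that the monomial curve meets an affine hyperplane in at most $d-1$ points. The only minor cosmetic difference is your replacement of $f$ by its squarefree part, which is harmless but unnecessary: the paper works directly with $f$ having $d$ distinct complex roots, and the splitting-field argument already handles the general case.
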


\begin{proof} Let $K_f$ denote the splitting field of $f$ over $\Qq$,
  which has degree $[K_f: \Qq] \le d!$.  If a large prime $p$ splits
  completely in $K_f$, then there are $d$ distinct solutions to the
  congruence $f(x) \equiv 0 \mods p$, so that $\rho(p)=d$ for such
  primes.  Further, by the Chebotarev density theorem the proportion
  of primes that split completely in $K_f$ is $1/[K_f:\Qq] \ge 1/d!$,
  so that Assumption \ref{Ass1} holds.  Finally, any affine hyperplane
  in $(\Zz/p\Zz)^{d-1}$ can intersect the curve
  $(t,t^2,\ldots,t^{d-1})$ in at most $d-1$ points.  Thus
  $\lambda(p) \le d-1$, and we conclude that
  $$ 
  \sum_{\substack{p\le x \\ \rho(p)\ge 1}} \Big(1
  -\frac{\lambda(p)}{\rho(p)}\Big) \frac{1}{p} \ge \sum_{\substack{p\le
      x \\ \rho(p)=d }} \Big( 1- \frac{d-1}{d}\Big) \frac 1p \ge
  \frac{1}{d} \Big(\frac{1}{d!} +o(1)\Big) \log \log x.
  $$ 
  The result now follows from Theorem \ref{thm2}.
\end{proof} 

Stated qualitatively, Theorem \ref{thm2.1} implies that the measures 
$$ 
\frac{1}{|\cQ(x)|} \sum_{q \in \cQ(x)} \frac{1}{\rho(q)}
\sum_{\substack{a \mods q\\ f(a)\equiv 0\mods q}} \delta_{\{
  \frac{a}{q},\frac{a^2}{q},\ldots, \frac{a^{d-1}}{q}\}}
$$ 
converge to the uniform measure as $x\to \infty$.  Indeed
Theorem~\ref{thm2.1} implies a quantitative ``mod~$q$'' version
of~\cite[Conjecture 4.1]{hrushovski}; this conjecture is related to
the axiomatization (in the setting of continuous first-order logic) of
the theory of finite prime fields with an additive character.  In the
remarks below we mention a few other related applications that may be
either deduced qualitatively from Theorem~\ref{thm2.1}, or established
in a quantitative form by adapting the same argument.

\begin{example} If~$d\geq 2$, then by ignoring all but the first
  coordinate, the equidistribution of
  $\{ \frac aq, \frac{a^2}{q}, \ldots, \frac{a^{d-1}}{q}\}$ implies
  the equidistribution of the first coordinate $\{\frac aq\}$.  Let
  $f \in \Zz[X]$ be a polynomial with $d\ge 2$ distinct complex roots,
  and let $A_{p^v}$ denote the subset of $\Zz/p^v\Zz$ consisting of
  the points $a$ with $f(a) \equiv 0\mods {p^v}$.  In this
  $1$-dimensional case we may take $\lambda(p)=1$.  Then, with the
  usual meanings of $\cQ$, $\Delta_q$, we have for large $x$
$$ 
\frac{1}{|\cQ(x)|} \sum_{q\in \cQ(x)} \disc(\Delta_q) \ll_d (\log
x)^{-\frac{1}{8(d!)}}.
 $$ 
 This is a version of Hooley's result, and we shall discuss the
 differences from his formulation in the next subsection.  Note that
 $f$ does not have to be irreducible, but should merely have at least
 two distinct complex roots.  The case of reducible quadratic
 polynomials was discussed earlier by Martin and
 Sitar~\cite{martin-sitar}.
\end{example}  

\begin{example} Let $f \in {\Zz}[X]$ have $d\geq 2$ distinct complex
  roots, and let $g \in \Zz[X]$ be a non-constant polynomial of degree
  $<d$.  For each prime power $p^v$, let $A_{p^v}$ denote the set of
  residue classes $g(a) \mods {p^v}$ where $a$ is a root of
  $f(x)\equiv 0\mods {p^v}$.  Let $A_q$, $\cQ$, $\Delta_q$ have their
  usual meanings.  As we saw in the proof of Theorem \ref{thm2.1} for
  a density of primes at least $1/d!$, the congruence
  $f(x) \equiv 0 \mods p$ has $d$ roots.  Since $g$ is non-constant
  and has degree $\le d-1$, for such primes $p$ we see that $A_p$ has
  at least $2$ elements.  Therefore, we obtain using Theorem
  \ref{thm1} that
$$ 
\frac{1}{|\cQ(x)|} \sum_{q\in \cQ(x)} \disc(\Delta_q) \ll_d (\log x)^{-\frac{1}{7(d!)}}.
 $$ 
 In other words, for most $q\in \cQ$, the points $g(a) \mods q$ get
 equidistributed.

 To give another variant, suppose now that $g\in \Zz[X]$ has degree at
 least $2$ but at most $d-1$, and let now $A_{p^v}$ denote the set of
 points $(a,g(a)) \in (\Zz/p^v\Zz)^2$ where $f(a) \equiv 0\mods {p^v}$.  The
 intersection of $A_p$ with any affine hyperplane has at most $d-1$
 points, and so an application of Theorem \ref{thm2} shows that 
 $$ 
\frac{1}{|\cQ(x)|} \sum_{q\in \cQ(x)} \disc(\Delta_q) \ll_d (\log
x)^{-\frac{1}{4d (d!)}}.
 $$ 
\end{example}  

\begin{example} Here is (essentially) a reformulation of the previous
  example.  Let $f$ and $g$ be two polynomials in $\Zz[X]$ with
  degrees $d_1$ and $d_2$ respectively.  Assume that $f\circ g$ has
  $d$ distinct complex roots with $d>d_2$.  Take $A_{p^v}$ to be the
  set of residue classes $a\mods {p^v}$ such that
  $f(a)\equiv 0\mods {p^v}$, and such that $a \equiv g(b) \mods {p^v}$
  is a value of the polynomial $g$.  This fits the framework of
  Example 2, by noting that $b$ is a root of $f\circ g \mods {p^v}$
  and then $a$ is just the value $g(b)$.  Thus, we obtain the
  equidistribution of $\{a/q\}$ for those roots~$a$ of a polynomial
  $f$ that are constrained to be in the image of a polynomial $g$.
\end{example}  

\begin{example} We now consider extensions of Theorem \ref{thm2.1},
  where the moduli $q$ are restricted to the integers all of whose
  prime factors lie in a prescribed set ${\cP}$.  That is, given
  $f \in \Zz[X]$ with at least $2$ distinct complex roots, we take
  $A_{p^v} = \emptyset$ if $p\notin \cP$ and when $p\in \cP$ take $A_{p^v}$ to
  be the points $(a,a^2,\ldots, a^{d-1}) \in (\Zz/p^v\Zz)^{d-1}$ where
  $a$ is a root of $f \mods {p^v}$.  Or, as in Example 1, we could consider
  the one dimensional situation of $A_{p^v}$ being the roots of $f\mods {p^v}$
  for $p\in \cP$.  We now give a couple of examples of such analogues
  of Theorem \ref{thm2.1}.
 
  Let $K/\Qq$ be a Galois extension, and let $\cP$ denote the set of
  primes that are the norm of a principal ideal in $K$.  This means
  that the primes in $\cP$ are those that are completely split in
  $H_K$, the Hilbert class field of $K$.  The set $\cP'$ of primes
  that are completely split in the compositum $H_KK_f$ (with $K_f$ the
  splitting field of $f$) form a subset of $\cP$ and if $p\in \cP'$
  then $f \equiv 0\mods p$ has $d$ roots.  The Chebotarev density
  theorem shows that $\cP'$ has positive density.  Thus
 $$ 
 \sum_{\substack{ p\in \cP \\ \rho(p) \ge 1 \\ p \le x}} \Big(1
 -\frac{\lambda(p)}{\rho(p)}\Big) \frac 1p \ge \sum_{\substack{ p\in
     \cP' \\ p\le x}} \Big(1- \frac{d-1}{d} \Big) \frac 1p \ge
 \delta(K,f) \log \log x,
 $$ 
 for some constant $\delta(K,f)> 0$ and all large $x$.  Theorem
 \ref{thm2} now gives the equidistribution of $A_q$ for most moduli
 $q$ for which $f \equiv 0\mods q$ has a root, and when the prime
 factors of $q$ are constrained to the set $\cP$.  For example,
 if~$m\geq 1$ is a fixed integer, this applies to $\cP$ being the set
 of primes of the form $x^2 + my^2$.
 
 To give a complementary example, suppose $K/\Qq$ is a Galois
 extension, with $K\neq \Qq$, that is linearly disjoint from $K_f$,
 and take $\cP$ to be the set of primes that are {\em not} norms of
 ideals in $K$.  Since~$K$ and~$K_f$ are linearly disjoint, the Galois
 group of the compositum $KK_f$ is isomorphic to~$G\times G_f$. There
 is a positive density of primes~$p$ such that the Frobenius at~$p$ is
 trivial in~$G_f$, so that $\rho_f(p)=d\geq 2$ (if $p\nmid D$), but
 non-trivial in~$G$ (since~$|G|\geq 2$). Then $p$ is not the norm of
 an ideal of~$\Zz_K$, so~$p\in \cP$.  Now we may apply Theorem
 \ref{thm2} as usual.
  
\end{example} 

\begin{remark}
  D.R. Heath-Brown has informed us of another possible variant of these
  results.  If $F(x,y)$ is an irreducible integral form of degree
  $>1$, then one can obtain the equidistribution (for the relevant
  moduli $q$) of the fractional parts of solutions $(x,y)$ to
  $F(x,y)\equiv 0 \mods q$.  Such a result might potentially be used
  to count the number of points of bounded height on the Châtelet
  surfaces $Z^2+W^2=F(X,Y)$ where $F$ is a quartic polynomial
  (see~\cite{dlbt}).
\end{remark}

\subsection{Hooley's measures}
\label{sec-hooley}

We now compare our results with the precise
statement of~\cite{hooley}. 
If $f$ is a fixed primitive irreducible polynomial in~$\Zz[X]$ with degree at least $2$, then Hooley~\cite{hooley} showed that the
probability measures
$$
\mu_x=\frac{1}{M_x}\sum_{q\in\cQ(x)}\rho_f(q)\Delta_q=
\frac{1}{M_x}\sum_{q\in\cQ(x)}\sum_{a\in Z_q}\delta_{\{\tfrac{a}{q}\}}
$$
converge, as~$x\to+\infty$, to the uniform measure on $\Rr/\Zz$.  Here   
$$
M_x=\sum_{q\leq x}\rho_f(q)
$$
denotes a normalizing factor, which is asymptotically $C_f x$ for a
positive constant $C_f$.  Hooley's measures are not the same as the
measures
$$
\frac{1}{|\cQ(x)|}\sum_{q\in\cQ(x)}\Delta_q=
\frac{1}{|\cQ(x)|}\sum_{q\in\cQ(x)}\frac{1}{\rho_f(q)}\sum_{a\in Z_q}
\delta_{\{\tfrac{a}{q}\}}
$$
that occur implicitly in Theorem~\ref{thm1}.  In the context of
equidistribution arising from the Chinese Remainder Theorem, the
measures we introduce seem more natural, and an analogue of
Theorem~\ref{thm1} for the measures $\mu_x$ is false in general.

\begin{proposition}\label{pr-counter}
  There exist sets~$A_p\subset \Zz/p\Zz$ defined for all primes~$p$,
  with~$|A_p|\geq 2$ for all~$p$ large enough, such that the measures
  $$
  \mu_x=\frac{1}{M_x}\sum_{q\in\cQ(x)}\rho(q)\Delta_q=
  \frac{1}{M_x}\sum_{q\in\cQ(x)}\sum_{a\in
    Z_q}\delta_{\{\tfrac{a}{q}\}}, \qquad 
 \text{ with } \qquad 
  M_x=\sum_{q\leq x}\rho(q), 
  $$
  do not converge to the uniform measure as~$x\to+\infty$.  Here we take $A_{p^v} =\emptyset$ for all $v\ge 2$.
\end{proposition}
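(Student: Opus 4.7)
I plan to construct sets $A_p\subset \Zz/p\Zz$ with $|A_p|\geq 2$ for all large primes $p$ (and $A_{p^v}=\emptyset$ for $v\geq 2$) such that the Hooley-weighted measures $\mu_x$ do not converge to the uniform measure on $\Rr/\Zz$. The central difference with Theorem~\ref{thm1} is that $\mu_x$ gives each pair $(q,a)$ equal weight $1/M_x$, so it over-weights $q\in\cQ(x)$ with large $\rho(q)$; the strategy is to engineer the sets $A_p$ so that the $\Delta_q$ for such $q$ share a systematic deviation from uniform that persists in the averaging.

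By the Chinese Remainder Theorem, writing $a\in A_q$ through its residues $(a_p)_{p\mid q}\in\prod_{p\mid q} A_p$, we have
\[
\Big\{\frac{a}{q}\Big\}=\Big\{\sum_{p\mid q}\frac{a_p\,y_p}{p}\Big\},\qquad y_p=(q/p)^{-1}\pmod p,
\]
so $\mu_x$ is the push-forward of the uniform distribution on pairs $(q,(a_p)_{p\mid q})$ under this map. A natural candidate is $A_p=\{0,1,\dots,k-1\}$ for a fixed integer $k\geq 2$ and all primes $p>k$: then $\rho(p)=k$, the residues $a\pmod p$ of elements of $A_q$ are uniformly bounded, and the Fourier coefficient at $h\in\Zz\setminus\{0\}$ reads
\[
\hat\mu_x(h)=\frac{1}{M_x}\sum_{q\in\cQ(x)}\prod_{p\mid q}\sum_{j=0}^{k-1} e\Big(h\,j\,\frac{y_p}{p}\Big).
\]
The inner geometric sum is close to its maximum $k$ precisely when $hy_p$ is small modulo $p$, and my plan is to show that for some $h\neq 0$ the amplification provided by $\rho(q)=k^{\omega(q)}$ makes the contribution from ``aligned'' $q$'s dominate, so that $|\hat\mu_x(h)|$ stays bounded below by a positive constant along a subsequence $x\to\infty$.

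The main obstacle is that the twist $y_p=(q/p)^{-1}\pmod p$ behaves pseudorandomly as $q$ varies, so that for a typical $q$ the product of geometric sums is subject to substantial cancellation. To defeat this one has to identify a subfamily of $q$'s for which enough of the $y_p$'s are simultaneously small modulo $p$; such $q$'s are rare in $\cQ(x)$, but the Hooley weighting $\rho(q)=k^{\omega(q)}$ over-counts precisely those $q$ with many prime factors, and the crux of the argument is a combinatorial-analytic estimate showing that this amplification is enough to prevent $\hat\mu_x(h)\to 0$. Should the construction with constant $k$ turn out to be too delicate, I would replace it by a family with $|A_p|$ growing with $p$ (e.g.\ $|A_p|\sim \log p$), so that the exponential sums $\sum_{a\in A_p} e(hay_p/p)$ retain substantial magnitude on a larger set of twists; the structural mechanism (CRT decomposition combined with the $\rho(q)$-amplification of aligned contributions) is the same, and the computation of $\hat\mu_x(h)$ carries through with improved margins.
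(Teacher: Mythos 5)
Your plan does not reach a complete argument, and the mechanism you propose is unlikely to be rescuable without a fundamental change. Two concrete problems:

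\emph{Fixed $k$ cannot work.} With $A_p=\{0,\dots,k-1\}$ for all large $p$, the $h$-th Fourier coefficient of $\mu_x$ is $\frac{1}{M_x}\sum_{q\le x}\prod_{p\mid q} S_p(h;q)$ with $S_p(h;q)=\sum_{j<k}e(hjy_p/p)$, and $y_p=(q/p)^{-1}\bmod p$ does equidistribute in $(\Zz/p\Zz)^\times$ as $q$ ranges over multiples of $p$. The $L^2$-average of $|S_p|$ over $y_p$ is about $\sqrt{k}$, so after summing over $q$ you are comparing $\sum_q (\sqrt{k})^{\omega(q)}$ against $M_x=\sum_q k^{\omega(q)}$, and the ratio decays like a negative power of $\log x$. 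Contrary to what you hope, the factor $\rho(q)=k^{\omega(q)}$ over-weights $q$ with many prime factors, which is precisely where the cancellation across the independent twists $y_p$ is \emph{worst}, not best. So there is no subsequence along which $\hat\mu_x(h)$ stays bounded away from zero, and $\mu_x$ almost certainly \emph{does} converge to uniform in this case.

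\emph{The fallback $|A_p|\sim\log p$ is far too small.} The paper's proof does not touch Fourier coefficients at all: it sets $A_p=\{1,\dots,g(p)\}$ with $g(p)=\lfloor p/\log p\rfloor$, so that all the points $a/p$ with $a\in A_p$ lie in $[0,\varepsilon]$ once $p>e^{1/\varepsilon}$, and then proves (Lemma~\ref{lm-weird}) the key fact that $\sum_{q\le x}g(q)\ll \sum_{p\le x}g(p)$, i.e.\ that \emph{prime moduli alone carry a positive proportion of the Hooley weight} $M_x$. It then reads off $\mu_x([0,\varepsilon])\ge 1/(2C)$ directly, with no exponential sums needed. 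For this to work one needs $\rho(p)$ to be within a $\log p$ factor of $p$, not $\log p$: if $\rho(p)\asymp\log p$ then already the products over $q=p_1p_2$ contribute $\sum_{p_1p_2\le x}\log p_1\log p_2\gg x\log x$, which swamps $\sum_{p\le x}\log p\asymp x$, so the prime moduli do not dominate $M_x$ and the localization of $\Delta_p$ near the origin is diluted away. In short, the correct counterexample requires $|A_p|$ essentially as large as possible (close to $p$), and the argument is a direct mass computation on a short interval rather than a Weyl-sum estimate. I suggest you discard the Fourier-amplification route and adopt that construction, for which the only nontrivial step is a sum estimate of the shape of Lemma~\ref{lm-weird}.
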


\begin{lemma}\label{lm-weird}
  Let $g$ denote the multiplicative function defined on squarefree
  integers $q$ by setting $g(p) = 0$ for $p\le e^2$, and
  $g(p) = \lfloor p/\log p \rfloor$ for $p>e^2$.  Then there is an
  absolute constant $C$ such that for all large $x$
  \begin{equation}\label{eq-weird}
    \sum_{q\leq x } g(q)\leq C  \sum_{p\leq x} g(p).
  \end{equation}
\end{lemma}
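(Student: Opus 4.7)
My plan is to show $\sum_{q \le x} g(q) \ll x^2/(\log x)^2$, matching the order of the right side: by the prime number theorem and partial summation, $\sum_{p \le x} g(p) = \sum_{e^2 < p \le x} \lfloor p/\log p\rfloor \sim x^2/(2(\log x)^2)$. Since $g$ vanishes on non-squarefree $q$ and on $q$ divisible by any prime $\le e^2$, and since $\lfloor p/\log p\rfloor \le p/\log p$, it suffices to bound
\begin{equation*}
S(x) := \sum_{\substack{q \le x,\ q \text{ squarefree}\\ p \mid q\, \Rightarrow\, p > e^2}} \prod_{p\mid q} \frac{p}{\log p}.
\end{equation*}

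I would stratify $S(x)$ by the number of prime divisors $k$, then replace distinct prime tuples by ordered tuples allowing repetitions to obtain $S(x) \le 1 + \sum_{k \ge 1} U_k(x)/k!$, where
\begin{equation*}
U_k(x) := \sum_{\substack{(p_1, \ldots, p_k)\\ p_i > e^2,\ p_1 \cdots p_k \le x}} \prod_{i=1}^k \frac{p_i}{\log p_i}.
\end{equation*}
Singling out one coordinate gives the recursion $U_k(x) = \sum_{e^2 < p \le x e^{-2(k-1)}} (p/\log p)\, U_{k-1}(x/p)$ (the upper limit on $p$ ensures $x/p \ge e^{2(k-1)}$, below which $U_{k-1}$ vanishes anyway), with $U_0 = 1$ and $U_1(x) \ll x^2/(\log x)^2$ from the prime number theorem. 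I would prove by induction on $k$ that there is an absolute constant $M$ such that $U_k(x) \le M^k x^2/(\log x)^2$ for all $k \ge 1$ and $x \ge e^{2k}$.

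The inductive step reduces to the uniform-in-$k$ auxiliary estimate
\begin{equation*}
\sum_{e^2 < p \le x e^{-2(k-1)}} \frac{1}{p \log p\, (\log(x/p))^2} \le \frac{D}{(\log x)^2}
\end{equation*}
for some absolute $D$ and large $x$. I would prove this by splitting at $p = x^{1/2}$: for $p \le x^{1/2}$, use $\log(x/p) \ge (\log x)/2$ together with the convergence of $\sum_p 1/(p\log p)$; for $p > x^{1/2}$, use $\log p \ge (\log x)/2$ and compare the remaining sum with $\int_{x^{1/2}}^{x e^{-2(k-1)}} dt/(t \log t\, (\log(x/t))^2)$, which via the substitution $u = x/t$ becomes $\int_{e^{2(k-1)}}^{x^{1/2}} du/(u (\log u)^2) = 1/(2(k-1)) - 2/\log x \le 1/(2(k-1))$, bounded uniformly for $k \ge 2$.

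Summing the bound $U_k(x) \le M^k x^2/(\log x)^2$ against $1/k!$ gives $\sum_{k\ge 1} U_k(x)/k! \le (e^M - 1)\, x^2/(\log x)^2$, hence $S(x) \ll x^2/(\log x)^2$, and the lemma follows by comparison with the right side. The main obstacle is making the auxiliary estimate uniform in $k$: the apparent singularity as $p \to x$, where $\log(x/p)$ tends to zero, is tamed precisely by the truncation $p \le x e^{-2(k-1)}$ forced by the recursion, and the $k$-dependence enters only as $1/(k-1)$, which shrinks as $k$ grows rather than causing trouble.
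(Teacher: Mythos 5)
Your route is genuinely different from the paper's: the paper observes that $\prod_{p\mid q}\log p\ge\log q$ for squarefree $q$ with all prime factors $>e^2$, deduces $\prod_{p\mid q}\log p\ge\frac{1}{100}(\log q)^2$ for any $q$ admitting a balanced factorization $q=q_1q_2$ with $q_i>q^{1/10}$, and handles the remaining $q$ (those with a prime factor $>x^{1/20}$) directly; there is no induction on the number of prime factors. Your stratification into ordered tuples, the recursion $U_k(x)=\sum_p (p/\log p)U_{k-1}(x/p)$, and the truncation $p\le xe^{-2(k-1)}$ are all sound as a framework.

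However, the auxiliary estimate as you have written it does not deliver $D/(\log x)^2$, and this breaks the induction. Under $u=x/t$ one has $dt/t\mapsto -du/u$ and $\log(x/t)\mapsto\log u$, but also $\log t\mapsto\log x-\log u$; hence
$\int_{x^{1/2}}^{xe^{-2(k-1)}}\frac{dt}{t\log t(\log(x/t))^2}=\int_{e^{2(k-1)}}^{x^{1/2}}\frac{du}{u(\log x-\log u)(\log u)^2}$,
and you have silently dropped the factor $\log x-\log u$. Since that factor is in the denominator and $\ge 1$, dropping it overestimates the integral to $\asymp 1/(2(k-1))$, a constant. Multiplying by the single $2/\log x$ you extracted from $\log p\ge(\log x)/2$ then gives only $\ll 1/((k-1)\log x)$ for the $p>x^{1/2}$ piece; at $k=2$ this is $\asymp 1/\log x$, not $\ll 1/(\log x)^2$, so $U_k(x)\le M^kx^2/(\log x)^2$ fails already at $k=2$. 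The repair is immediate: keep the factor $\log x-\log u$ and note $\log x-\log u\ge(\log x)/2$ on $u\le x^{1/2}$, which supplies the missing power of $\log x$. You should also spell out the sum-to-integral comparison (partial summation with $\sum_{p\le t}1/p=\log\log t+O(1)$, or a dyadic decomposition of $\log(x/p)$) and check that its error terms are $O(1/((k-1)^2\log x))$, which is harmless. With those repairs your induction closes and recovers $\sum_{q\le x}g(q)\ll x^2/(\log x)^2$, matching the paper's bound.
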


\begin{proof}  
  Since $\sum_{p\le x} g(p) \gg x^2/(\log x)^2$, the lemma amounts to
  proving the bound
  \begin{equation} 
    \label{eq-claim} 
    \sum_{q\leq x} g(q)\ll \frac{x^2}{(\log x)^2}.
  \end{equation} 
  \par
  If~$q$ is a squarefree integer only divisible by primes~$>e^2$, then
  a simple induction on the number of prime factors of~$q$ shows that
  $$
  \prod_{p\mid q}\log p\geq \log q.
  $$
  Consequently, if~$q$ can be factored $q=q_1q_2$ with $q_i>q^{1/10}$,
  then
  $$
  \prod_{p\mid q}\log p=
  \prod_{p\mid q_1}\log p
  \prod_{p\mid q_2}\log p\geq (\log q_1)(\log q_2)\geq
  \frac{1}{100}(\log q)^2.
  $$
  Thus the contributions of such integers~$q\leq x$ to
  the left-hand side of~(\ref{eq-claim}) is
  $$
  \leq 100\sum_{q\leq x}\frac{q}{(\log q)^2}\ll \frac{x^2}{(\log
    x)^2}.
  $$
  The contribution of~$q$ with~$q\leq x^{9/10}$ is also of smaller
  order of magnitude.
  \par
  It remains to consider the contribution of integers
  $x^{9/10} \le q\le x$ that cannot be factored as $q_1 q_2$ with
  $x^{1/5} \le q_i \le x^{4/5}$.  Note that such $q$ must have largest
  prime factor at least $x^{1/20}$, else a greedy procedure would
  produce a factorization of $q$ with both factors large.  Thus the
  remaining integers $x^{9/10} \le q\le x$ may be written as $pq_1$
  with $p >x^{1/20}$ and their contribution is
  \begin{align*}
  \ll \sum_{q_1 \le x^{19/20}} g(q_1) \sum_{x^{1/20} \le p \le x/q_1} \frac{p}{\log p} 
  \ll \sum_{q_1 \le x^{19/20}} g(q_1) \frac{x^2}{q_1^2 (\log x)^2} 
&  \ll \frac{x^2}{\log x^2} \prod_{p\le x^{19/20}} \Big( 1+ \frac{g(p)}{p^2} \Big)
 \\
    & \ll \frac{x^2}{(\log x)^2}, 
  \end{align*}  
  since the Euler product over all primes converges.  This concludes
  the proof of \eqref{eq-claim}, and the lemma.
\end{proof}

\begin{proof}[Proof of Proposition~\emph{\ref{pr-counter}}] For
  $p >e^2$ take $A_p$ to be the set of residue classes $k \mods{p}$
  with $1\le k \le g(p)$, with $g$ as in Lemma \ref{lm-weird}.  Take
  $A_{p^v}=\emptyset$ for all $v\ge 2$.  Here
  $M_x = \sum_{q\le x} g(q)$, and note that for any $\eps >0$ if
  $p> e^{1/\eps}$ then all the $g(p)$ points $k/p$ with $k\in A_p$
  land in the interval $[0,\eps]$.  Therefore, using Lemma
  \ref{lm-weird}, for large $x$
$$ 
\mu_x([0,\eps]) \ge \frac{1}{M_x} \sum_{e^{1/\eps} < p \le x} g(p) \ge
\frac{1}{2C}.
$$ 
Choosing $\eps =1/(4C)$ we see that $\mu_x$ does not converge to the
uniform measure.
\end{proof}

\begin{remark}
  (1) One can prove generalizations of the result of~\cite{hooley} to
  arbitrary sets defined by the Chinese Remainder Theorem by assuming
  in addition that the sets~$A_{p^v}$ are not too large. For instance, we
  can show that
  if the estimates 
  \begin{gather*}
    \sum_{\substack{p\leq x\\\rho(p)\geq 2}}\log p\gg x,\quad\quad  
    \sum_{p^v \leq x}\rho(p^v)^2\log p^v \ll x
  \end{gather*}
  hold for~$x$ large enough, then the measures
  $$
  \mu_x=\frac{1}{M_x}\sum_{q\in\cQ(x)}\rho(q)\Delta_q,\quad\quad
  M_x=\sum_{q\in\cQ(x)}\rho(q),
  $$
  converge to the uniform measure on~$\Rr/\Zz$.

  Since these conditions hold for the set of roots modulo~$p$ of a
  fixed monic polynomial~$f$ (where~$\rho_f(q)\leq \deg(f)$), this
would  recover~\cite[Th. 2]{hooley}. 
\par
(2) For some precise computations of Weyl sums (relative to Hooley's
measures) for some reducible polynomials, see the work of Dartyge and
Martin~\cite{dartyge-martin}. 
\end{remark}




\subsection{Equidistribution of Bezout points}

Let~$n\geq 2$ be fixed, and let $X_1$ and $X_2$ be two reduced closed
subschemes of~$\Aa^n/\Zz$. Assume that the generic fiber of~$X_1$ is a
geometrically connected curve over~$\Qq$, of degree~$d_1$, and that
the generic fiber of~$X_2$ is a geometrically connected hypersurface
of degree~$d_2$. (Concretely, $X_2$ is the zero set of an absolutely
irreducible integral polynomial with~$n$ variables, and~$X_1$ could be
given by~$n-1$ ``generically transverse'' such equations.)
\par
Assume that the closures of the generic fibers of~$X_1$ and~$X_2$
in~$\Pp^n/\Qq$ intersect transversely. The intersection is then finite
by Bezout's Theorem, and has~$d_1d_2$ geometric points (note that we
assume transverse intersection also at infinity).  Let~$k\leq d_1d_2$
be the number of geometric intersection points belonging to the
hyperplane at infinity.
\par
For any prime power~$p^v$, let~$A_{p^v}=(X_1\cap X_2)(\Zz/p^v\Zz)$ be the set of
$\Zz/p^v\Zz$-rational intersection points of the curve and the
hypersurface. Then, for any~$q$, the set~$A_q$ is the set of
intersection points with coordinates in~$\Zz/q\Zz$.
\par
The generic fiber of the intersection variety $X_1\cap X_2$ is defined
over~$\Qq$, and has finitely many geometric points. Let~$\gamma$ be
the Galois action of the Galois group of~$\Qq$ on~$X_1\cap X_2$. The
fixed field~$K$ of the kernel of this action is a finite Galois
extension~$K/\Qq$.  If~$p$ is totally split
in~$K$, then all intersection points are fixed by the Frobenius conjugacy
class of~$K$ at~$p$, which means that their coordinates belong
to~$\Zz/p\Zz$. Combining this with Bezout's Theorem, it follows that
there exists a set of primes~$p$ of positive density such
that~$|A_p|=d_1d_2-k$.
\par
We assume next that~$d_2\geq 2$ and that the curve~$X_1$ is not
contained in an affine hyperplane~$H$ (this implies that~$d_1\geq 2$,
but is a stronger assumption if~$n\geq 3$). Then for any affine
hyperplane~$H\subset (\Zz/p\Zz)^n$, we have
$$
|A_p\cap H|\leq \min(d_1,d_2)
$$
so that~$\lambda(p)\leq \min(d_1,d_2)$. Hence we conclude from
Theorem~\ref{thm2} that for most $q$ the fractional parts of the
intersection points modulo~$q$ become equidistributed
in~$(\Rr/\Zz)^n$, provided~$\min(d_1,d_2)< d_1d_2-k$.  As in the case
of polynomial congruences, it is natural to ask whether the
equidistribution of fractional parts of intersection points holds for
prime moduli.
\par 
As a concrete example, suppose that~$X_1$ and~$X_2$ are the plane
curves given by the equations
$$
X_1\colon X^3+Y^3=1,\quad\quad X_2\colon Y^2=X^3-2.
$$
These curves intersect transversally (including on the line at
infinity in~$\Pp^2$, since they have no common point there), and hence
the condition holds since~$3<9$.

\subsection{Pseudo-polynomials}

A \emph{pseudo-polynomial}, in the sense of Hall~\cite{hall}, is an
arithmetic function~$f\colon \Zz \to \Zz$ such that $m-n$ divides
$f(m)-f(n)$ for all integers $m\neq n$.  In other words, for each
$q\ge 1$, the reduction of~$f$ modulo~$q$ is $q$-periodic.  Examples
of such functions are given by polynomials~$f\in\Zz[X]$, but there are
uncountably many pseudo-polynomials that are not polynomials
(see~\cite[Th. 1]{hall}). Among the simplest explicit examples are
$f_1(n)=\lfloor e n!\rfloor$ (\cite[Cor. 2]{hall}), and
$$
f_2(n)=1-n+\frac{n(n-1)}{2}+\cdots+(-1)^n\frac{n!}{2}= (-1)^nD(n),
$$
where $D(n)$ is the number of \emph{derangements} (permutations
without fixed points) in the symmetric group on~$n$ letters. The
formula for~$D(n)$ is a classical application of inclusion--exclusion,
and that~$f_2$ is a pseudo-polynomial follows then
from~\cite[Th. 1]{hall}).

For a pseudo-polynomial $f$, and a positive integer $q$, take $A_q$ to
be the zeros of $f \mods{q}$; that is, $A_q$ is the set of residue
classes $n \mods{q}$ with $f(n) \equiv 0 \mods q$.  These sets $A_q$
are built out of the sets $A_{p^v}$ for prime powers $p^v$ using the
Chinese Remainder Theorem.  As we have discussed, the sets $A_q$ get
equidistributed for most $q$, when $f$ is a genuine polynomial. Does
Theorem \ref{thm1} also apply generally to pseudo-polynomials?  Vivian
Kuperberg~\cite{vkup} pointed out to us that there are
pseudo-polynomials whose values are only divisible by a very sparse
sequence of primes (indeed, one may make this sequence increase
arbitrarily rapidly).  Thus there is no hope of applying Theorem
\ref{thm1} to a general pseudo-polynomial, but the examples $f_1$ and
$f_2$ seem well behaved, and we present some numerical experiments
concerning these examples.
 For computations with $f_1$ and $f_2$, it is
efficient to use the recursive definitions
\begin{gather*}
  f_1(1)=2,\quad\quad f_1(n+1)=1+(n+1)f_1(n),
  \\
f_2(0)=1,\quad\quad f_2(n+1)=1-(n+1)f_2(n).
\end{gather*}
\par
Numerical experiments suggest that the values
$f_1(n) = \lfloor e n!\rfloor \mods p$ for $1\le n\le p$ behave like
$p$ independent random residue classes drawn uniformly from
$\Zz/p\Zz$.  If so, this suggests that there are $k$ solutions to
$f_1(n)\equiv 0 \mods p$ for a proportion $e^{-1}/k!$ of the primes
$p$ below $x$: that is, for any $k\ge 0$
$$
\lim_{x\to+\infty}\frac{1}{\pi(x)}|\{p\leq x\,\mid\, \rho(p)=k\}|=
\frac{1}{e}\frac{1}{k!}. 
$$
In other words, the quantity $\rho(p)$ is distributed like a Poisson
random variable with parameter $1$.  If true, this would imply that
Theorem~\ref{thm1} applies to the zeros of~$f_1$ modulo
primes. However, we do not know how to prove that~$\rho(p)\geq 2$ for
an infinite set of primes.
\par
The following tables give the empirical and theoretical Poisson
distribution for the $78498$ primes $p\leq x=10^6$ (normalized by
multiplying the Poisson probabilities by~$\pi(x)$; no empirical value
is larger than~$8$ in that range), as well as the empirical and
theoretical moments of order~$1\leq n\leq 4$.

\begin{center}
  \textit{Empirical and theoretical probability distribution}
  \par
  \begin{tabular}{c|c|c|c|c|c|c|c|c|c}
    $k$ & $0$ & $1$ & $2$ & $3$ & $4$ & $5$ & $6$ & $7$ & $8$ \\
    \hline Empirical & $29054$& $28822$& $14314$&
    $4777$& $1250$& $236$& $38$& $5$& $2$\\
    \hline Poisson & $28877.8$& $28877.8$& $14438.9$& $4813$&
    $1203.2$&$240.6$&
    $40.1$7& $5.7$& $0.7$\\
    \hline
  \end{tabular}
\end{center}
\par
\medskip
\par
\begin{center}
  \textit{Empirical and theoretical moments}
  \par
  \begin{tabular}{c|c|c|c|c}
    $n$ & $1$ & $2$ & $3$ & $4$\\
    \hline
    Empirical & $0.99671$ & $1.9964$ & $5.0034$ & $15.054$\\
    \hline
    Poisson & $1$ & $2$ & $5$ & $15$\\
    \hline
  \end{tabular}
\end{center}

\par
\medskip
\par
Note that if~$g\in\Zz[X]$ is an irreducible polynomial of degree~$n$
with Galois group $S_n$ (the generic case), then the Chebotarev
density theorem implies that
$$
\lim_{x\to+\infty}\frac{1}{\pi(x)}|\{p\leq x\,\mid\,
\rho_g(p)=k\}|=\frac{1}{n!} |\{ \pi \in S_n \text{ with } k \text{ fixed points} \}|. 
$$
Now for large $n$, the number of fixed points of a permutation drawn
uniformly at random from $S_n$ is distributed approximately like a
Poisson random variable with parameter $1$.  Thus our guess above on
the number of zeros of the pseudo-polynomial $f_1 \mods p$ is akin to
what holds for a generic irreducible polynomial of large degree.
\par
\medskip
\par
For the function~$f_2(n)=(-1)^nD(n)$, numerical experiments also
suggest that there is a positive density of primes with
$\rho(p) \ge 2$, so that Theorem~\ref{thm1} should apply.  Once again
we are unable to establish such a claim.

But, if we put $f_3(n) = f_2(n)-1$, then from the recurrence for $f_2$
given above we may recognize that $f_3(0)=0$, and
$f_3(p-1) \equiv 0\mods p$ for each prime $p$.  Thus in this case
$\rho(p) \ge 2$ for each prime $p$, and Theorem \ref{thm1} applies.
Note that $|f_3(n)|$ has a combinatorial meaning: it equals the number
of permutations in $S_n$ with exactly one fixed point.  Since $|f_3|$
and $f_3$ have the same zeros $\mods q$ for any $q$, we see that
Theorem \ref{thm1} applies to the combinatorial sequence $|f_3(n)|$.
  
\section{Preliminaries}\label{sec-prelim}

Throughout we work in the higher dimensional framework of Theorems
\ref{thm2}, \ref{thm3}, \ref{thm4}, so that $A_q$ is a subset
of~$(\Zz/q\Zz)^n$, and $\rho(q)$ is its cardinality.  We keep in place
Assumption \ref{Ass1}, and have in mind that $x$ is large in
comparison to $\alpha$ and $x_0$.

\subsection{The sets $\cQ$ and $\cQ_k$}  

We begin by gaining an understanding of the size of the sets $\cQ(x)$
and $\cQ_k(x)$ (of elements in $\cQ$ with exactly $k$ distinct prime factors).

\begin{lemma} \label{lemQ1} For $x$ large enough in terms of $\alpha$
  and $x_0$
$$ 
|\cQ(x)| \gg \frac{\alpha x}{\log x} \prod_{\substack{ p\le x \\ p\in
    \cQ}} \Big(1+ \frac 1p\Big).
$$ 
\end{lemma}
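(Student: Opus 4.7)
The plan is to restrict attention to squarefree integers $q\le x$ every one of whose prime factors lies in $\mathcal{P}(x) := \{p\le x : p\in\cQ\}$. Because ``$q\in\cQ$'' only imposes a condition at each prime power $p^v\Vert q$, every such $q$ automatically satisfies $q\in\cQ(x)$, regardless of the choice of $A_{p^v}$ for $v\ge 2$. Hence it is enough to prove
$$
N(x) \;:=\; \#\{q\le x : q\text{ squarefree},\ p\mid q\Rightarrow p\in\mathcal{P}(x)\} \;\gg\; \frac{\alpha x}{\log x}\prod_{p\in\mathcal{P}(x)}\Bigl(1+\tfrac{1}{p}\Bigr).
$$

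The first step is to translate Assumption~\ref{Ass1} into more usable analytic input. Writing $\theta_{\mathcal{P}}(t):=\sum_{p\le t,\ p\in\cQ}\log p$, partial summation applied to $\theta_{\mathcal{P}}(t)\ge \alpha t$ for $t\ge x_0$ yields
$$
\sum_{p\in\mathcal{P}(x)}\frac{\log p}{p}\;\ge\; \alpha\log x+O(\alpha),\qquad \sum_{p\in\mathcal{P}(x)}\frac{1}{p}\;\ge\; \alpha\log\log x+O(\alpha),
$$
for $x$ large in terms of $\alpha$ and $x_0$. In particular the ``mean value'' parameter $\kappa = (\log x)^{-1}\sum_{p\in\mathcal{P}(x)}(\log p)/p$ satisfies $\kappa\ge \alpha/2$ for $x$ large.

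The heart of the argument is then a lower bound for the sum of the non-negative multiplicative function $f = \mu^2\cdot \mathbf{1}_{\cQ}$, which is supported precisely on the squarefree $\mathcal{P}(x)$-integers that $N(x)$ counts. One applies a lower bound of Wirsing/Hall--Tenenbaum/Selberg--Delange type, namely
$$
N(x)\;=\;\sum_{n\le x}f(n)\;\gg\; \frac{x(\log x)^{\kappa-1}}{\Gamma(\kappa)}\prod_{p\le x}\Bigl(1+\tfrac{f(p)}{p}\Bigr)(1-\tfrac1p)^{\kappa},
$$
valid since $f(p)\in\{0,1\}$ is bounded. Combined with Mertens' theorem $\prod_{p\le x}(1-1/p)^{-\kappa}\asymp (\log x)^\kappa$ and the elementary fact that $1/\Gamma(\kappa)\gg \kappa$ for $0<\kappa\le 1$, this collapses to
$$
N(x)\;\gg\; \frac{\kappa\, x}{\log x}\prod_{p\in\mathcal{P}(x)}\Bigl(1+\tfrac{1}{p}\Bigr)\;\gg\; \frac{\alpha x}{\log x}\prod_{p\in\mathcal{P}(x)}\Bigl(1+\tfrac{1}{p}\Bigr).
$$

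The main obstacle is extracting the explicit factor of $\alpha$ (rather than an ineffective constant depending on the density of $\mathcal{P}$) from the multiplicative-function estimate; when $\alpha$ is small this requires tracking the $1/\Gamma(\kappa)$ dependence. A self-contained alternative I would pursue, avoiding Wirsing's machinery, is to use the identity $\log n = \sum_{p\mid n}\log p$ for squarefree $n$ to obtain the recursion
$$
N(x)\log x \;\ge\; \sum_{n\le x}f(n)\log n \;=\; \sum_{p\in\mathcal{P}(x)}\log p \cdot N'(x/p,p),
$$
where $N'(y,p)$ counts squarefree $m\le y$ with prime factors in $\mathcal{P}(x)\setminus\{p\}$. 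An induction on the largest prime factor, initiated by a convexity/Rankin-trick upper bound to control the error from omitting $p$, then produces the product $\prod_{p\in\mathcal{P}(x)}(1+1/p)$ with the factor $\alpha$ arising directly from the lower bound on $\sum_{p\in\mathcal{P}(x)}(\log p)/p$.
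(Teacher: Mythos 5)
Your reduction to counting squarefree integers $q\le x$ whose prime factors all lie among the primes $p\le x$ with $p\in\cQ$ is fine (such $q$ are automatically in $\cQ$), and the partial-summation deductions from Assumption~\ref{Ass1} are correct. The gap is in the central step: you invoke a Wirsing/Hall--Tenenbaum/Selberg--Delange type lower bound
$$N(x)\gg\frac{x(\log x)^{\kappa-1}}{\Gamma(\kappa)}\prod_{p\le x}\Bigl(1+\frac{f(p)}{p}\Bigr)\Bigl(1-\frac1p\Bigr)^{\kappa}$$
and justify it as ``valid since $f(p)\in\{0,1\}$ is bounded''. Boundedness alone is not a sufficient hypothesis, and the bound is false in that generality: if the allowed prime set is $\{p\le x^{\epsilon}\}$ for a small fixed $\epsilon>0$, then $\kappa\approx\epsilon$ and (after Mertens) the right-hand side collapses to $\gg\kappa(x/\log x)\prod(1+1/p)\asymp\epsilon^{2}x$, while the true count of squarefree $x^{\epsilon}$-smooth integers below $x$ is smaller by a factor that is super-exponentially small in $1/\epsilon$ (it is governed by the Dickman function at the argument $1/\epsilon$). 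That example is excluded here because it violates Assumption~\ref{Ass1}; but the lower-bound theorems you name require a two-sided regularity of $\sum_{p\le y}f(p)\log p$ as $y$ ranges --- typically an asymptotic $(\kappa+o(1))y$ with a fixed $\kappa$ --- whereas Assumption~\ref{Ass1} supplies only the one-sided bound $\ge\alpha y$. Verifying that some such theorem applies under Assumption~\ref{Ass1} alone, with an implied constant depending only on $\alpha$, is exactly what has to be proved, so citing the bound here is essentially circular.

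Your ``self-contained alternative'' is much closer to what actually works, and is in fact the paper's method, except that no induction on the largest prime factor is needed. The paper writes $\log q\ge\sum_{pd=q}\log p$ (valid for every $q$), keeps only the decompositions $q=pd$ with $d<x^{1/3}$, $d\in\cQ$, $p\in(x^{1/3},x/d]$, $p\in\cQ$ --- for which $p\nmid d$ is automatic and $q=pd\in\cQ(x)$ --- applies Assumption~\ref{Ass1} once to the inner sum over $p$, and arrives at $|\cQ(x)|\ge\tfrac{\alpha x}{2\log x}\sum_{d<x^{1/3},\,d\in\cQ}1/d$. The remaining sum over $d$ is then bounded below directly: restrict to squarefree $d$ with all prime factors $\le z=x^{1/9}$, and use Rankin's trick with $\tau=1/\log z$ to show that the tail $d>x^{1/3}$ contributes at most $e^{-1}\prod_{p\le z,\,p\in\cQ}(1+1/p)$, so that $\sum_{d<x^{1/3},\,d\in\cQ}1/d\ge(1-e^{-1})\prod_{p\le z,\,p\in\cQ}(1+1/p)$; since $\prod_{z<p\le x}(1+1/p)\ll1$, the lemma follows. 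This is a one-shot argument in which the factor $\alpha$ enters from a single application of Assumption~\ref{Ass1}, not through a $\Gamma(\kappa)$ normalization.
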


\begin{proof}  Observe that 
$$ 
|\cQ(x)| \ge \frac{1}{\log x} \sum_{\substack{ q\in \cQ(x) }} \log q
\ge \frac{1}{\log x} \sum_{ q\in {\cQ(x)} } \sum_{pd =q} \log p \ge
\frac{1}{\log x} \sum_{\substack{ d< x^{1/3} \\ d\in \cQ }}
\sum_{\substack{ x^{1/3} < p \le x/d \\ p\in \cQ}} \log p.
$$
Using Assumption \ref{Ass1}, it follows for large $x$ that 
$$ 
|\cQ (x)| \ge \frac{\alpha x}{2 \log x} \sum_{ \substack{ d< x^{1/3}
    \\ d\in \cQ }} \frac 1d.
$$ 

Now put $z=x^{1/9}$ and $\tau = 1/\log z$, and note that (restricting
attention to squarefree $d$)
$$ 
\sum_{ \substack{ d< x^{1/3} \\ d\in \cQ }} \frac 1d \ge \sum_{
  \substack{ d< x^{1/3} \\ d\in \cQ \\ p|d \implies p\le z }} \frac
{\mu(d)^2}d = \prod_{\substack{p\le z \\ p\in \cQ}} \Big(1+ \frac
1p\Big) - \sum_{ \substack{ d> x^{1/3} \\ d\in \cQ \\ p|d\implies p\le
    z}} \frac {\mu(d)^2}d,
$$
and further 
$$ 
\sum_{ \substack{ d> x^{1/3} \\ d\in \cQ \\ p|d\implies p\le z}} \frac
{\mu(d)^2}d \le \sum_{\substack{d \in \cQ \\ p |d \implies p\le z}}
\frac {\mu(d)^2}d \Big( \frac{d}{x^{1/3}}\Big)^\tau = e^{-3}
\prod_{\substack{ p\le z \\ p\in \cQ}} \Big( 1+
\frac{p^{\tau}}{p}\Big).
$$ 
Therefore 
$$ 
\sum_{ \substack{ d< x^{1/3}  \\ d\in \cQ }} \frac 1d \ge
\prod_{\substack{p\le z \\ p\in \cQ}} \Big(1+ \frac 1p\Big)  \Big( 1-
e^{-3} \prod_{\substack{p\le z \\ p\in \cQ }
}\frac{1+p^{\tau}/p}{1+1/p}
\Big). 
$$
Now, for large $x$ (and so large $z$),   
$$ 
\prod_{\substack{p\le z \\ p\in \cQ } }\frac{1+p^{\tau}/p}{1+1/p} \le
\prod_{p\le z} \Big( 1 + \frac{p^{\tau} -1}{p} \Big) \le \exp\Big(
\sum_{p\le z} \frac{p^{\tau}-1}{p} \Big) \le \exp\Big( \sum_{p\le z}
\frac{(e-1) \tau\log p}{p} \Big) \le e^2.
$$ 
Assembling the above observations together we conclude that 
$$ 
|\cQ(x)| \ge \frac{\alpha x}{2\log x} \Big(1 -\frac 1e\Big)
\prod_{\substack{p\le z \\ p\in \cQ}} \Big(1+ \frac 1p\Big).
$$ 
The lemma follows since
$$
\prod_{x^{1/9} < p \le x}(1+1/p) \ll 1.
$$
\end{proof}

We can also prove a matching upper bound for $|\cQ(x)|$, and in fact
will need a such a bound for the smooth (or friable) elements in
$\cQ(x)$.

\begin{lemma} \label{lemQ2} Let $x$ be large, and $z$ be a parameter
  with $\log x \le z \le x$.  Then
$$ 
\sum_{\substack{ q \in\cQ(x) \\ p|q \implies p\le z}} 1 \ll
\frac{x}{\log x} \exp\Big( -\frac{\log x}{\log z}\Big)
\prod_{\substack{p \le z \\ p\in \cQ}} \Big(1+ \frac{1}{p} \Big).
$$ 
\end{lemma}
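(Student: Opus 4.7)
Set $u = \log x/\log z$ and $\sigma = 1 - 1/\log z$, so that $x^\sigma = x\exp(-u)$. My plan is to combine Rankin's trick (which supplies the factor $\exp(-u)$) with a standard upper-bound sieve estimate for means of multiplicative functions (which supplies the factor $1/\log x$).

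First, Rankin's inequality $1\leq (x/q)^\sigma$ for $q\leq x$, together with the multiplicativity of $\rho$, gives
\[
\sum_{\substack{q\in\cQ(x)\\ p\mid q\Rightarrow p\leq z}}1 \;\leq\; x^\sigma\prod_{\substack{p\leq z\\ p\in\cQ}}\Bigl(1 + \sum_{\substack{v\geq 1\\ \rho(p^v)\geq 1}}p^{-v\sigma}\Bigr)\;\leq\; x^\sigma\prod_{\substack{p\leq z\\ p\in\cQ}}(1-p^{-\sigma})^{-1}.
\]
Note that $p^{-\sigma} = p^{-1}\cdot p^{1/\log z}\leq e/p$ for $p\leq z$. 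The elementary inequality $e^t - 1 \leq (e-1)t$ on $[0,1]$ gives $p^{-\sigma} - 1/p \leq (e-1)(\log p)/(p\log z)$, and by Mertens' estimate $\sum_{p\leq z}(\log p)/p = \log z + O(1)$ we get $\sum_{p\leq z,\,p\in\cQ}(p^{-\sigma} - 1/p) = O(1)$. Bounding the higher-order terms $p^{-v\sigma}$ ($v\geq 2$) trivially then yields
\[
\prod_{\substack{p\leq z\\ p\in\cQ}}(1-p^{-\sigma})^{-1}\;\ll\;\prod_{\substack{p\leq z\\ p\in\cQ}}(1+1/p),
\]
so the Rankin step alone delivers the desired inequality up to a spurious factor of $\log x$.

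To remove this factor, let $h_z$ denote the indicator of $\{q\geq 1 : q\in\cQ \text{ and } p\mid q\Rightarrow p\leq z\}$, a non-negative multiplicative function. Its prime-level hypothesis $\sum_{p\leq y}h_z(p)\log p\leq\min(y,z)\leq y$ is immediate, so the Halberstam--Richert upper-bound sieve gives
\[
\sum_{q\leq y}h_z(q)\;\ll\;\frac{y}{\log y}\prod_{\substack{p\leq z\\ p\in\cQ}}(1+1/p)
\]
uniformly in $y\geq 2$. Combining this with Rankin -- for instance by applying Halberstam--Richert at $y = x^\sigma$ (where $\log y\asymp\log x$, giving the correct shape for $q\leq x^\sigma$) and bounding the tail range $x^\sigma < q\leq x$ by a direct Rankin bound on the Euler product -- delivers the claim.

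The principal obstacle is the clean interlocking of the Rankin smoothness saving with the Halberstam--Richert density saving while keeping the implicit constants absolute. A more self-contained implementation, which bypasses the interpolation step, is Hildebrand's saddle-point analysis applied directly to the Dirichlet series $\sum_{q}h_z(q)q^{-s}$; this recovers the bound in one stroke by optimising the choice of $\sigma$ against $x$ and $z$ simultaneously.
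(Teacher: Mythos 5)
Your ingredients (Rankin's trick and a Shiu/Halberstam--Richert-type mean-value bound) are the right ones, and your opening Rankin computation is correct and parallels the Rankin step in the paper's own proof. However, the step where you combine them has a genuine gap. The Rankin bound you established is a bound on the \emph{entire} sum: $\sum_{q \le x}\mathbf{1}_{\text{smooth},\,\cQ} \le x^\sigma\prod_{p\le z,\,p\in\cQ}(1-p^{-\sigma})^{-1} \ll xe^{-u}\prod(1+1/p)$, which is a factor $\log x$ too large. Restricting the left side to the tail $x^\sigma < q \le x$ does not make this bound any smaller -- the constraint $q>x^\sigma$ cannot be fed into the Rankin weight $(x/q)^{1/\log z}$ in any way that recovers the missing $1/\log x$, since that weight is $\asymp 1$ precisely on the tail. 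And for $z$ close to $x$ (so $\sigma$ close to $1$), the tail $(x^\sigma,x]$ contains a positive proportion of the integers up to $x$, so there is no getting around handling it nontrivially. Thus ``applying Halberstam--Richert at $y=x^\sigma$ and Rankin on the tail'' does \emph{not} deliver the claim as written. You do flag the interlocking as ``the principal obstacle'' and gesture at Hildebrand's saddle-point method as a bypass, but that is a sketch of a research plan rather than a proof, and it would need substantial work to set up rigorously in this generality.

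The paper sidesteps the interpolation problem entirely by interleaving the two devices inside a single sum. It first introduces the $1/\log x$ factor via the elementary observation $1 \le \sqrt x + \tfrac{2}{\log x}\sum_{\sqrt x<q\le x}\log q$, then writes $\log q = \sum_{\ell\Vert q}\log\ell$ over prime-power divisors $\ell$ (unitary factorization $q=d\ell$). For fixed $d$, the inner sum over $\ell$ is bounded using both the smoothness constraint (primes $\le z$) and the size constraint ($\ell\le x/d$), yielding $\ll (x/d)^{1-\tau}z^\tau$ with $\tau=1/\log z$; this is where the Rankin exponent enters. The remaining sum over $d$ of $d^{-(1-\tau)}$ is then estimated by the Euler product, exactly as in your Rankin step. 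This is essentially the elementary Halberstam--Richert proof mechanism with the Rankin weight woven in at the right place, and it keeps all constants absolute. If you want to salvage your two-regime plan, you would need to treat the tail by a different device (e.g.\ iterate the $q=d\ell$ decomposition there), at which point you are back to something close to the paper's argument.
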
 
\begin{proof}  We start by noting that 
$$ 
\sum_{\substack{ q \in \cQ(x) \\ p|q \implies p\le z}} 1 \le
\sqrt{x} + \frac{2}{\log x} \sum_{\substack{ \sqrt{x} <q \le x \\ q
    \in \cQ \\ p|q \implies p\le z}} \log q \le \sqrt{x} +
\frac{2}{\log x} \sum_{\substack{ q \in\cQ(x) \\ p|q \implies
    p\le z}} \sum_{\substack{ q=d\ell \\ (d,\ell)=1 }} \log \ell,
$$
where $\ell$ denotes a prime power.  The term $\sqrt{x}$ is much
smaller than the estimate we desire, and so we may ignore it and focus
on the second term above.

To estimate the second sum, we shall first sum over $d$ (which must be
in $\cQ$), and then over $\ell$.  Note that $\ell$ must be $\le x/d$,
and if $\ell$ is a prime then it is also constrained to be $\le z$.
Thus, for a given $d$, the sum over $\ell$ is
$$ 
\le \sum_{\substack{ p^v \le x/d \\ v\ge 2}} \log (p^v) + \sum_{p\le
  \min( x/d,z)} \log p \ll \frac{\sqrt{x}}{\sqrt{d}} + \min \Big(
\frac{x}{d},z \Big) \ll \Big(\frac xd\Big)^{1-\tau} z^{\tau},
 $$ 
 for any $\tau \in [0,\frac 12]$.  Using this observation with
 $\tau = 1/\log z$, we obtain
%
\begin{align*}
  \sum_{\substack{ q \in\cQ(x) \\ p|q \implies p\le z}}
  \sum_{\substack{ q=d\ell \\ (d,\ell)=1 }} \log \ell \ll
  \sum_{\substack{ d \in \cQ(x) \\ p|d \implies p\le z}} \Big(
  \frac{x}{d}\Big)^{1-\tau} z^{\tau} &=x \exp\Big( -\frac{\log
    x}{\log z}\Big) \sum_{\substack{ d \in\cQ(x) \\ p|d \implies p\le
      z}} \frac{1}{d^{1-1/\log z}}
  \\
  &\le x \exp\Big( -\frac{\log x}{\log z}\Big)
  \prod_{\substack{ p\le z \\ p\in \cQ}} \Big( 1- \frac{p^{1/\log z}}{p} \Big)^{-1} \\
  & \ll x \exp\Big( -\frac{\log x}{\log z}\Big) \prod_{\substack{ p\le
      z \\ p\in \cQ}} \Big( 1+ \frac{p^{1/\log z}}{p} \Big).
\end{align*} 
The lemma follows upon noting that 
\begin{align*} 
  \prod_{\substack{ p\le z \\ p\in \cQ}}
  \Big( 1+ \frac{p^{1/\log z}}{p} \Big)
  &\le \prod_{\substack{ p\le z \\ p\in \cQ}}
  \Big( 1+ \frac{1}{p} \Big)
  \prod_{p\le z}\Big( \frac{1+p^{1/\log z}/p}{1+1/p}\Big) 
  \\
  &\le
    \prod_{\substack{ p\le z \\ p\in \cQ}}
  \Big( 1+ \frac{1}{p} \Big)
  \exp\Big( \sum_{p\le z} \frac{p^{1/\log z}-1}{p} \Big)
  \ll  \prod_{\substack{ p\le z \\ p\in \cQ}} \Big( 1+ \frac{1}{p} \Big).
\end{align*} 
\end{proof}  

The next two lemmas will be analogues of the above for the
sets~$\cQ_k(x)$ for a given integer~$k\geq 1$. Readers who are mostly
interested in Theorems~\ref{thm1} and~\ref{thm2} may skip at this
point to Section~\ref{sec3.2}

Define 
\begin{equation} 
\label{3.2} 
\cP(x) = \sum_{\substack{p\leq x \\ p\in \cQ}} \frac 1p +3, 
\end{equation} 
so that for large $x$, Assumption \ref{Ass1} gives
\begin{equation}
\label{3.3} 
\alpha \log \log x + O(1) \leq\cP(x) \leq \log \log x + O(1). 
\end{equation} 
The added constant~$3$ in \eqref{3.2} is unimportant, but will be
convenient later.

\begin{lemma}
\label{lem3.1}  
Let $x$  be large, and let $k$ be an integer with
$1\leq k \leq \exp(\cP(x)/4)$.  Then
$$ 
|\cQ_k(x)| \gg \frac{\alpha x}{\log x} \frac{\cP(x)^{k-1}}{(k-1)!}
\exp\Big(-\frac{4k\log k}{\cP(x)}\Big),
$$ 
where the implied constant is absolute.
\end{lemma}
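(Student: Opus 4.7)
The plan is to adapt the log-weighting strategy of Lemma~\ref{lemQ1} by restricting attention to squarefree integers with exactly $k$ distinct prime factors in $\cQ$ and singling out one ``large'' prime factor, then estimating the resulting sum via Mertens-type bounds. Set $z=x^{1/(2k)}$. I would count pairs $(m,p)$ in which $m$ is a squarefree product of exactly $k-1$ distinct primes drawn from $\cQ\cap[1,z]$ and $p$ is a prime in $\cQ\cap(z,x/m]$. Each such pair yields a distinct element $q=mp\in\cQ_k(x)$ (since $q$ has $k$ distinct prime factors all lying in $\cQ$ and $q\le x$), and the decomposition is unique because $p$ is the only prime factor of $q$ exceeding $z$. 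Thus
$$
|\cQ_k(x)|\ge \sum_{m}\bigl(\pi_\cQ(x/m)-\pi_\cQ(z)\bigr),
$$
where $\pi_\cQ(y)=|\{p\le y\,:\,p\in\cQ\}|$. Since $m\le z^{k-1}\le x^{1/2}$, Abel summation applied to Assumption~\ref{Ass1} yields $\pi_\cQ(x/m)\ge \alpha x/(m\log x)$, while Chebyshev's bound makes $\pi_\cQ(z)\ll z/\log z$ negligible in comparison. Hence
$$
|\cQ_k(x)|\gg\frac{\alpha x}{\log x}\sum_{m}\frac{1}{m}.
$$

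Next I would bound the inner sum from below. Writing $\sum_m 1/m$ as $1/(k-1)!$ times the sum over ordered $(k-1)$-tuples of distinct primes in $\cQ\cap[1,z]$, and applying inclusion--exclusion on pairs of coinciding indices, I obtain
$$
(k-1)!\sum_m\frac{1}{m}\ge S^{k-1}-\binom{k-1}{2}V\,S^{k-3},
$$
with $S=\sum_{p\in\cQ\cap[1,z]}1/p$ and $V=\sum_{p\in\cQ\cap[1,z]}1/p^2=O(1)$. A standard Mertens estimate gives
$$
\cP(x)-\cP(z)=\sum_{z<p\le x,\,p\in\cQ}\frac{1}{p}\le\log(\log x/\log z)+O(1)=\log(2k)+O(1),
$$
so $S\ge \cP(x)-\log(2k)-O(1)$. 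Under the hypothesis $k\le\exp(\cP(x)/4)$ we have $\log(2k)/\cP(x)\le 1/2$ for $x$ large, and the inequality $\log(1-y)\ge-2y$ for $y\le 1/2$ yields
$$
\bigl(S/\cP(x)\bigr)^{k-1}\ge\exp\bigl(-2(k-1)\log(2k)/\cP(x)\bigr)\ge\exp\bigl(-4k\log k/\cP(x)\bigr),
$$
where the last inequality uses $2(k-1)\log(2k)\le 4(k-1)\log k\le 4k\log k$ for $k\ge 2$ (and is trivial for $k=1$).

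The main technical obstacle is controlling the distinctness correction $\binom{k-1}{2}V/S^2$: it is automatically at most $1/2$ only when $S\gg k$, which holds directly for $k\ll\cP(x)$. When $k$ is close to the upper limit $\exp(\cP(x)/4)$, I would instead restrict the primes dividing $m$ to the window $\cQ\cap(w,z]$ with threshold $w\asymp k^2/\cP(x)$; this forces $V\le S/w$, so that the correction becomes $\le 1/2$, at the price of replacing $S$ by $S'=S-O(\log\log k)$. This additive loss is absorbed into a harmless weakening of the exponential factor, since $O((k-1)\log\log k/\cP(x))\ll k\log k/\cP(x)$. Combining the above with the hypothesis yields $\sum_m 1/m\gg \cP(x)^{k-1}/(k-1)!\cdot\exp(-4k\log k/\cP(x))$, and multiplying by $\alpha x/\log x$ gives the claimed lower bound on $|\cQ_k(x)|$.
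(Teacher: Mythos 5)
Your proposal follows the same overall structure as the paper's proof: single out one ``large'' prime factor, apply Assumption~\ref{Ass1} to count it, and then estimate the sum of $1/m$ over the small-prime part $m$. The one place you diverge is the distinctness bookkeeping for the $k-1$ small primes. You attack it with a Bonferroni (inclusion--exclusion) bound, $(k-1)!\sum_m 1/m \ge S^{k-1}-\binom{k-1}{2}VS^{k-3}$, and you correctly notice that the correction term $\binom{k-1}{2}V/S^2$ is not automatically $\le 1/2$ once $k\gg \cP(x)$; your fix (restrict the small primes to the window $(w,z]$ with $w\asymp k^2/\cP(x)$, so that $V\le S'/w$ and the loss in $S$ is only $O(\log\log k)$) is sound and the numerics check out. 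The paper avoids this case split by instead summing the primes $p_{k-1},\dots,p_1$ one at a time: having fixed $p_1,\dots,p_{k-2}$, the excluded mass $\sum_i 1/p_i$ is at most the sum of reciprocals of the first $k-2$ primes, i.e.\ $\log\log(k+1)+O(1)$, so each telescoping step loses only $\log\log(k+1)+O(1)$ from $\cP(x^{1/(2k)})$. This iterative exclusion gets the same $(\cP(x)-\log k - O(\log\log k))^{k-1}$ in one pass, with no need for a secondary window argument. Both routes are correct; the paper's is slightly more streamlined, your Bonferroni-plus-window route is a bit more mechanical but equally valid once the window threshold is justified (as you have done).

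One small point of hygiene: when you write $S\ge\cP(x)-\log(2k)-O(1)$ and then deduce $(S/\cP(x))^{k-1}\ge\exp(-2(k-1)\log(2k)/\cP(x))$, the absorbed $O(1)$ contributes an extra factor $\exp(-O(k)/\cP(x))$, which for bounded $k$ is $\gg 1$ but not literally $\ge 1$. This is harmless because the statement is a $\gg$ bound with an absolute implied constant, but it is worth flagging the constant loss explicitly rather than sweeping it into the $\le 4k\log k/\cP(x)$ comparison, which as stated is not true for $k=2$ once the additive constants (including the $+3$ in the definition of $\cP$) are tracked.
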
 

\begin{proof}
  We obtain a lower bound by counting only those elements of
  $\cQ_k(x)$ that are of the form $p_1 \cdots p_k$, where the primes
  $p_j$ are in strictly increasing order and satisfy $p_1$, \ldots,
  $p_{k-1} \leq x^{1/(2k)}$.  Fixing these primes~$p_1$, \ldots,
  $p_{k-1}$, we see using Assumption \ref{Ass1} that there are at
  least
  $$
  \geq \frac{\alpha x}{4 p_1\cdots p_{k-1} \log x}
  $$
  possible choices for the large prime $p_k$.  Therefore
  \begin{align*} 
    |\cQ_k(x)|
    &
      \geq\frac{\alpha x}{4 \log x} \sum_{\substack{p_1
      < \cdots <p_{k-1} \leq x^{1/(2k)} \\ p_j \in\cQ}}
    \frac{1}{p_1\cdots p_{k-1}}\\
    &=  \frac{\alpha x}{4 \log x}  \frac{1}{(k-1)!}
      \sum_{\substack{p_1, \ldots , p_{k-1} \leq x^{1/(2k)} \\ p_j \in\cQ \\
    p_j \text{ distinct} }} \frac{1}{p_1\cdots p_{k-1}}.
  \end{align*}
  
  Let $p_1$, \ldots, $p_{k-2}$ be distinct primes in $\cQ$ all below
  $x^{1/(2k)}$.  Then
  $$ 
  \sum_{\substack{ p_{k-1} \leq x^{1/(2k)} \\ p_{k-1} \neq p_1, \ldots,
      p_{k-2}
      \\ p_{k-1} \in \cQ}}\frac{1}{p_{k-1}} 
  = \big( \cP(x^{\frac{1}{2k}})  -2 \big)- \frac{1}{p_1} - \cdots - \frac{1}{p_{k-2}}. 
  $$ 
  The quantity $1/p_1 + \ldots +1/p_{k-2}$ is at most equal to the
  corresponding sum when the primes $p_i$ are equal to the first $k-2$
  primes, and hence is~$\leq \log \log (k+1)+O(1)$, so that
  $$
  \sum_{\substack{ p_{k-1} \leq x^{1/(2k)} \\ p_{k-1} \neq p_1,
      \ldots, p_{k-2} \\ p_{k-1} \in \cQ}}\frac{1}{p_{k-1}} \geq
  \cP(x^{\frac{1}{2k}}) - \log \log (k+1) - C
  $$
  for some absolute constant $C\geq 0$.  Repeating this argument, we
  find the same lower bound for each of the sums over $p_{k-2}$,
  $\ldots$, $p_1$, and therefore we obtain the lower bound
  $$ 
  |\cQ_k(x)| \gg \frac{\alpha x}{\log x} \frac{(\cP(x^{\frac{1}{2k}})
    - \log \log (k+1)-C)^{k-1}}{(k-1)!}
  $$
  for~$x\geq x_0$, where the implied constant is absolute.  Since
  $$
  \cP(x^{\frac{1}{2k}}) \geq\cP(x) - \sum_{x^{\frac{1}{2k}} < p\leq x}
  \frac{1}{p} = \cP(x) - \log k +O(1),
  $$
  and~$\log k\leq \cP(x)/4$, the lemma follows.
\end{proof} 

\begin{lemma} \label{lem3.2} Let $x$ be large.  Let
  $k \leq (\log x)^{\frac 12}$ be a positive integer, and $\kappa$ a
  non-negative integer with~$\kappa\leq k$.  The number of integers in
  $\cQ_k(x)$ having at least $\kappa$ distinct prime factors that are larger than
  $x^{1/(4k)}$ is
  $$ 
  \ll \frac{kx}{\log x} \frac{\cP(x)^{k-1}}{(k-1)!} \exp\Big(
  \frac{2k\log k}{\cP(x)} - \kappa \Big),
  $$ 
  where the implied constant is absolute. 
\end{lemma}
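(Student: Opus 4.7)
I propose to set $y = x^{1/(4k)}$ and, for each $q \in \cQ_k(x)$ with $\omega_>(q) \geq \kappa$ (where $\omega_>(q)$ denotes the number of distinct prime divisors of $q$ exceeding $y$), factor $q = Qm$ with $Q$ the $y$-rough part (product of the prime powers $p^v \Vert q$ with $p > y$) and $m$ the $y$-smooth complement. Setting $j = \omega(Q)$, the hypothesis becomes $j \geq \kappa$; one has $Q \in \cQ_j$ supported on primes $>y$, $m \in \cQ_{k-j}$ is $y$-smooth, and $(Q,m) = 1$. I split $S = \sum_{j=\kappa}^{k} N_j$, where $N_j$ counts the $q$'s with exactly $j$ large prime factors, and handle $j = k$ (where $m = 1$) as a negligible separate contribution bounded by Rankin's trick applied to $y$-rough integers in $\cQ_k(x)$.

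For $j<k$, I would first establish by induction a Hardy--Ramanujan-type upper bound
\[
|\cQ_l(Y)| \ll \frac{Y}{\log Y}\cdot\frac{(\cP(Y)+C)^{l-1}}{(l-1)!}\qquad(l\geq 1,\ Y\geq 3),
\]
which persists when the integers are restricted to $y$-smooth ones; the base case $|\cQ_1(Y)| \ll Y/\log Y$ is standard, and the induction step uses $l|\cQ_l(Y)| = \sum_{p^v \in \cQ,\,p^v \leq Y} |\cQ_{l-1}(Y/p^v;\,(\cdot,p)=1)|$. After peeling off $Q > x^{1/2}$ as a small remainder (so $\log(x/Q) \asymp \log x$) and using Mertens in the form $\sum_{y < p \leq x,\,p \in \cQ} 1/(p-1) \leq \log(4k) + O(1)$, one finds
\[
N_j \ll \frac{x}{\log x}\cdot\frac{(\cP(y)+C)^{k-j-1}(\log(4k)+C_1)^j}{(k-j-1)!\,j!}.
\]

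The crucial combinatorial step is the tail sum over $j \geq \kappa$. I apply the elementary inequality $\mathbf{1}_{j\geq\kappa}\leq r^{j-\kappa}$ (for any $r\geq 1$) and the binomial convolution $\sum_j\binom{k-1}{j}A^{k-1-j}(rB)^j = (A+rB)^{k-1}$, with $A=\cP(y)+C$ and $B=\log(4k)+C_1$, to obtain
\[
S \ll \frac{r^{-\kappa} x}{\log x}\cdot\frac{(A+rB)^{k-1}}{(k-1)!}.
\]
Since $\cP(y) \leq \cP(x)$, one has $A+rB\leq \cP(x) + rB + O(1)$, hence $(A+rB)^{k-1}/\cP(x)^{k-1}\leq\exp(rk\log k/\cP(x)+O(k/\cP(x)))$. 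Picking the Chernoff-optimal $r = \kappa\cP(x)/(k\log k)$ produces a bound of the shape
\[
S \ll \frac{x}{\log x}\cdot\frac{\cP(x)^{k-1}}{(k-1)!}\cdot\Big(\frac{e\mu}{\kappa}\Big)^\kappa e^{O(k\log k/\cP(x))},
\]
with $\mu = k\log k/\cP(x)$, and a short elementary calculus verification (maximising the logarithm of the relevant ratio at $\kappa=e\mu$) then dominates this by the target $\frac{kx}{\log x}\frac{\cP(x)^{k-1}}{(k-1)!}\exp(2k\log k/\cP(x)-\kappa)$ across the relevant range.

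The main obstacle I anticipate is establishing the Hardy--Ramanujan upper bound for $|\cQ_l(Y)|$ with $\cP(Y)$ rather than $\log\log Y$ in the exponent, since this refinement is not stated explicitly earlier in the paper; the inductive proof is routine but tracking the absolute constants requires care. The hypothesis $k\leq(\log x)^{1/2}$, combined with Assumption~\ref{Ass1}, keeps $\log k/\cP(x)$ controlled, so that all the $O(k/\cP(x))$ error terms remain benign through the Chernoff optimisation and the constant $2$ in the target exponent $2k\log k/\cP(x)$ is reached with room to spare.
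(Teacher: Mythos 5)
Your overall strategy mirrors the paper's: split $q$ into its $y$-rough part $Q$ and $y$-smooth part $m$ with $y = x^{1/(4k)}$, anchor the largest prime via Brun--Titchmarsh (or Hardy--Ramanujan), expand the remaining sum as a multinomial with $A\approx\cP(y)$ contributing for small prime powers and $B\approx\log(4k)$ for the large ones, and finally detect the constraint on the number of large prime factors by an exponential (Chernoff-type) weight $\mathbf{1}_{j\geq\kappa}\leq r^{j-\kappa}$ followed by the binomial identity. (The paper skips the intermediate Hardy--Ramanujan bound for $|\cQ_l(Y)|$ in favor of summing $1/(p_1^{v_1}\cdots p_{k-1}^{v_{k-1}})$ directly, but that is a presentational difference, not a mathematical one.)

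However, the Chernoff-optimisation step does not close: the bound $(e\mu/\kappa)^\kappa e^{O(\mu)}\cP(x)^{k-1}$, with $\mu=k\log k/\cP(x)$, is \emph{not} dominated by the target $k\,\cP(x)^{k-1}e^{2\mu-\kappa}$ uniformly in $\kappa$ when $k$ is large compared with $\cP(x)$. Taking logarithms, you need
\[
\kappa\bigl(1+\log(e\mu/\kappa)\bigr)\;\leq\;\log k + 2\mu - O(\mu),
\]
and the left-hand side, maximised over $\kappa$ at $\kappa=e\mu$, equals $e\mu$. Since $e>2$, the required inequality forces $(e-2+O(1))\mu\leq\log k$, i.e.\ $k\ll\cP(x)$. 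But the lemma must hold for all $k\leq(\log x)^{1/2}$, and it is precisely the regime $\cP(x)<k\leq\exp(\sqrt{\log\log x})$ (where $\mu$ is large and $(e-2)\mu\gg\log k$) that is used in the proof of Proposition~\ref{lm-small}(2). So the ``short elementary calculus verification'' you invoke fails there, and the constant $2$ in $\exp(2k\log k/\cP(x))$ is \emph{not} reached ``with room to spare'' by your route. There is also a secondary issue that $r=\kappa\cP(x)/(k\log k)$ can be $<1$, invalidating the step $\mathbf{1}_{j\geq\kappa}\leq r^{j-\kappa}$, and a small bookkeeping slip in the shape of $N_j$: the anchored prime lives in the rough part, so the factorials should be $B^{j-1}/(j-1)!\cdot A^{k-j}/(k-j)!$, not $A^{k-j-1}/(k-j-1)!\cdot B^j/j!$.

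The cure is to forgo the $\kappa$-dependent $r$ and take the fixed value $r=e$, as the paper does with its weight $e^{-(\kappa-1)}e^{j}$. The crucial algebraic point, which your estimate obscures by taking $A\approx\cP(x)$, is that $A$ and $B$ are not independent: $\cP(y)+\sum_{y<p\leq x,\,p\in\cQ}1/p=\cP(x)+O(1)$, so $A+B=\cP(x)+O(1)$ and hence $A+rB=\cP(x)+(r-1)B+O(r)$, with $B\leq\log(4k)+O(1)$. With $r=e$ this gives $A+eB\leq\cP(x)+(e-1)\log(4k)+O(1)$, and since $e-1<2$ the resulting exponent is $\leq(e-1)\mu+O(1)<2\mu$, giving exactly the stated bound.
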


\begin{proof}
  Let~$N$ denote this number.  Write $q \in \cQ_k$ as
  $q= p_1^{v_1} \cdots p_k^{v_k}$ with the primes $p_j$ in strictly
  ascending order.

  First, if $p_k < x^{1/(4k)}$, then $p_1 \cdots p_k \leq x^{1/4}$,
  and the number of choices for the exponents $(v_1, \ldots, v_k)$ is
  $\ll (\log x)^k \ll x^{\eps}$ for any~$\eps>0$.  Therefore in this
  case (which is only relevant for $\kappa =0$), we have
  $$
  N\ll x^{1/4+\epsilon} \le x^{1/3}
  $$
  since~$x$ is large.

  Suppose now that $p_k > x^{1/(4k)}$. Let~$p_1^{v_1}$, \ldots,
  $p_{k-1}^{v_{k-1}}$ be fixed. Note that
  $p_1^{v_1} \cdots p_{k-1}^{v_{k-1}}\leq x^{1-1/(4k)}$, so by the
  Brun--Titchmarsh inequality, the number of possible choices
  for~$p_k^{v_k}$ is
  $$ 
  \leq \frac{3x}{p_1^{v_1} \cdots p_{k-1}^{v_{k-1}} \log (x/p_1^{v_1}\cdots p_{k-1}^{v_{k-1}})} \leq
  \frac{12kx}{p_1^{v_1} \cdots p_{k-1}^{v_{k-1}} \log x}.
  $$ 
  Therefore 
  \begin{align*}
    N&\leq
       x^{\frac 13} + \frac{12kx}{\log x}
       \sum_{\substack{p_1 <\cdots <p_{k-1} \leq x \\
    p_j^{v_j} \in \cQ \\ p_{k-\kappa+1} > x^{1/(4k)}}} \frac{1}{p_1^{v_1}\cdots p_{k-1}^{v_{k-1}} } 
    \\
     &\leq x^{\frac 13} + \frac{12kx}{\log x}
       \sum_{\kappa-1\leq j\leq k-1} \frac{1}{j!}
       \Big( \sum_{\substack{ x \geq p >x^{1/(4k)}\\ p^v \in \cQ}} \frac{1}{p^v} \Big)^j 
       \frac{1}{(k-1-j)!} \Big( \sum_{\substack{ p \leq x^{1/(4k)} \\ p^v \in \cQ}} 
       \frac 1{p^v}\Big)^{k-1-j},
  \end{align*}
  where the variable~$j$ represents the number of primes among $p_1$,
  $\ldots$, $p_{k-1}$ that are larger than $x^{1/(4k)}$, and for
  each~$p$ we  sum over all~$v$ such that~$p^v\in\cQ$.
  Now the sum over $j$ above may be bounded by
  \begin{multline*}
    e^{-(\kappa-1)} \sum_{0\leq j\leq k-1} \frac{1}{j!} \Big( \sum_{\substack{ x
      \geq p>x^{1/(4k)} \\ p^v\in \cQ }} \frac{e}{p^v} \Big)^j \frac{1}{(k-1-j)!} \Big(
    \sum_{\substack{ p\leq x^{1/(4k)} \\ p^v \in \cQ}}
    \frac 1{p^v}\Big)^{k-1-j} \\
    =\frac{e^{-(\kappa-1)}}{(k-1)!}  \Bigl( \sum_{\substack{x \geq p>x^{1/(4k)} \\ p^v \in \cQ }}
    \frac{e}{p^v} + \sum_{\substack{ p\leq x^{1/(4k)} \\ p^v \in \cQ} } \frac 1{p^v} \Bigr)^{k-1}
    \\
    \ll \frac{e^{-(\kappa-1)}}{(k-1)!} (\cP(x) + (e-1) \log k +
    O(1))^{k-1},
  \end{multline*}
which establishes the lemma.
\end{proof}

\subsection{Weyl sums}\label{sec3.2}

For a modulus~$q \in \cQ$ and~$h\in \Zz^n$, define the normalized
Weyl sum
\begin{equation}\label{eq-whq}
W(h;q)=\frac{1}{\rho(q)}\sum_{x\in A_q}e\Bigl(\frac{h\cdot
  x}{q}\Bigr)
\end{equation}
where
$$
h\cdot x = h_1x_1+\cdots+h_nx_n. 
$$

We extend the definition of~$\lambda(p)$ (given just before Theorem
\ref{thm2}) to all positive integers. Given a prime power~$p^v$ in
$\cQ$, we let
$$
\lambda(p^v)= \max_{\substack{H\subset (\Zz/p^v\Zz)^n\\H\text{ affine
      hyperplane}}} |H\cap A_{p^v}|,
$$
and extend~$\lambda$ to~$\cQ$ by multiplicativity.  By the Chinese
Remainder Theorem, we have
$$
\lambda(q)= \max_{\substack{H\subset (\Zz/q\Zz)^n\\H\text{ affine
      hyperplane}}} |H\cap A_q|
$$
for~$q\in\cQ$, where an affine hyperplane~$H\subset
(\Zz/q\Zz)^n$ is a subset of the form
$$
H=\{x\in (\Zz/q\Zz)^n\,\mid\, h_1x_1+\cdots+h_nx_n=a\}
$$
for some~$a\in\Zz/q\Zz$
and~$(h_i)\in(\Zz/q\Zz)^n\setminus \{(0,\ldots,0)\}$.

For a given non-zero~$h\in\Zz^n$ and a prime power~$p^v$, we put
$$
\{ h, p^v\} 
=
\begin{cases} 
  1  &\text{ if  }h \equiv 0 \mods {p^v}\\
  p^v &\text{otherwise},
\end{cases}
$$  
and then extend this definition multiplicatively to define $\{ h,q\}$.

\begin{lemma}\label{lm-w}
  \emph{(1)} If~$q_1$ and~$q_2$ are coprime elements of~$\cQ$, 
  then 
  $$
  W(h;q_1q_2)=W(\bar{q}_1h;q_2)W(\bar{q}_2h;q_1),
  $$
  where $q_1 \bar{q}_1 \equiv 1\mods {q_2}$ and
  $q_2\bar{q}_2 \equiv 1\mods {q_1}$.
  \par
  \emph{(2)} Let~$h\in\Zz^n$, with $h\neq
  (0,\ldots,0)$. For~$q\in\cQ$, we have
  \begin{equation}\label{eq-2}
    \frac{1}{q}\sum_{a\mods{q}}|W(ah;q)|^2\leq
    \frac{\lambda(\{ h,q \} )}{\rho( \{ h, q\})}.
  \end{equation}
\end{lemma}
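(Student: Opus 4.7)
The plan is to treat the two parts separately. For \textbf{Part (1)}, the identity is a direct application of the Chinese Remainder Theorem. Given coprime $q_1, q_2 \in \cQ$, CRT identifies $A_{q_1 q_2}$ with $A_{q_1} \times A_{q_2}$ via $x \leftrightarrow (x_1, x_2)$, where $x \equiv q_2 \bar{q}_2 x_1 + q_1 \bar{q}_1 x_2 \mods{q_1 q_2}$. The defining congruences $q_1\bar{q}_1 \equiv 1 \mods{q_2}$ and $q_2\bar{q}_2 \equiv 1 \mods{q_1}$ then give
$$
e\Bigl(\frac{h \cdot x}{q_1 q_2}\Bigr) = e\Bigl(\frac{\bar{q}_2\, h \cdot x_1}{q_1}\Bigr)\, e\Bigl(\frac{\bar{q}_1\, h \cdot x_2}{q_2}\Bigr),
$$
so the exponential sum factors; combined with $\rho(q_1 q_2) = \rho(q_1) \rho(q_2)$ this yields the claim.

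For \textbf{Part (2)}, I would open the square and apply orthogonality of additive characters. Expanding gives
$$
\sum_{a \mods{q}} |W(ah;q)|^2 = \frac{1}{\rho(q)^2} \sum_{x, y \in A_q} \sum_{a \mods{q}} e\Bigl(\frac{a\, h \cdot (x - y)}{q}\Bigr) = \frac{q}{\rho(q)^2}\, N(h, q),
$$
where $N(h, q) := |\{(x, y) \in A_q^2 : h \cdot (x - y) \equiv 0 \mods{q}\}|$. The bound therefore reduces to showing $N(h, q) \le \rho(q)^2\, \lambda(\{h, q\})/\rho(\{h, q\})$. To prove this, factor $q = q_1 q_2$, where $q_2 = \{h, q\}$ collects the prime powers $p^v \Vert q$ for which $h \not\equiv 0 \mods{p^v}$ and $q_1$ collects the rest. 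By CRT the constraint $h \cdot (x - y) \equiv 0 \mods{q}$ decouples prime power by prime power, so that $N(h, q) = \prod_{p^v \Vert q} N(h, p^v)$. For $p^v \mid q_1$ the local congruence is automatic, giving $N(h, p^v) = \rho(p^v)^2$. For $p^v \mid q_2$, since $h$ is non-zero in $(\Zz/p^v\Zz)^n$, each fixed $y' \in A_{p^v}$ constrains $x'$ to lie in the affine hyperplane $\{x' : h \cdot x' \equiv h \cdot y' \mods{p^v}\}$, whose intersection with $A_{p^v}$ has cardinality at most $\lambda(p^v)$ by the definition of $\lambda$ extended to prime powers just above the lemma; summing over $y'$ gives $N(h, p^v) \le \rho(p^v)\lambda(p^v)$. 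Multiplying these local estimates over $p^v \Vert q$ and using the multiplicativity of $\rho$ and $\lambda$ yields $N(h, q) \le \rho(q_1)^2 \rho(q_2) \lambda(q_2) = \rho(q)^2\, \lambda(q_2)/\rho(q_2)$, which is the desired bound.

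The argument is essentially mechanical: no serious obstacle is expected. The only points requiring care are correctly identifying $q_2$ with the conductor-like quantity $\{h, q\}$, so that the hyperplane bound $\lambda(p^v)$ is only invoked where $h \not\equiv 0 \mods{p^v}$, and noting that the level sets $\{x' : h \cdot x' = c\}$ arising from the argument are genuinely among the affine hyperplanes appearing in the definition of $\lambda(p^v)$.
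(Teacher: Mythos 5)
Your proof is correct and follows essentially the same route as the paper's: part (1) is the same CRT computation, and in part (2) you open the square, apply orthogonality, and reduce to a multiplicative count of pairs $(x,y)$ with $h\cdot(x-y)\equiv 0$, splitting prime powers according to whether $h\equiv 0\mods{p^v}$ (degenerate, giving $\rho(p^v)^2$) or not (hyperplane, giving $\le\rho(p^v)\lambda(p^v)$). The paper organizes the count slightly differently (fixing $x$ and bounding the number of admissible $y$ by $\alpha(x)\le\rho(q/\{h,q\})\lambda(\{h,q\})$ uniformly in $x$, then summing over $x$), but this is a cosmetic difference and both arrive at the same estimate.
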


\begin{proof}
  These are elementary statements (see~\cite[Lemmas 1 and 3]{hooley}
  for~$n=1$).
  \par
  (1) For~$x_1\in \Zz^{n}$ and~$x_2\in \Zz^n$, the element
  of~$(\Zz/q_1q_2\Zz)^n$ which is congruent to~$x_i$ modulo~$q_i$ is
  the residue class of the vector
  $$
  x=q_1\bar{q}_1x_2+q_2\bar{q}_2x_1\in\Zz^n.
  $$
  Therefore
  \begin{align*}
    W(h;q_1q_2)
    &=\frac{1}{\rho(q_1q_2)}\sum_{x\in A_{q_1q_2}}e\Bigl(\frac{h\cdot
      x}{q_1q_2}\Bigr)\\
    &= \frac{1}{\rho(q_1)\rho(q_2)}
      \sum_{x_1\in A_{q_1}}\sum_{x_2\in A_{q_2}}
      e\Bigl(\frac{h\cdot (q_1\bar{q}_1x_2+q_2\bar{q}_2x_2)}{q_1q_2}\Bigr)=
      W(\bar{q}_1h;q_2)W(\bar{q}_2h;q_1).
  \end{align*}
  \par
  (2) Opening the square and interchanging the order of the
  summations, we find that
  $$
  \sum_{a\mods{q}}|W(ah;q)|^2 =\frac{1}{\rho(q)^2}\sum_{x,y\in
    A_q}\sum_{a\mods{q}}e\Bigl(\frac{ah\cdot (x-y)}{q}\Bigr).
  $$
  By orthogonality of characters modulo~$q$, this implies
  $$
  \sum_{a\mods{q}}|W(ah;q)|^2 =\frac{q}{\rho(q)^2}
  \sum_{\substack{x,y\in A_q\\ h\cdot (x -y)=0\mods{q}}}1.
  $$
  Summing over~$x$ first, this gives
  $$
  \sum_{a\mods{q}}|W(ah;q)|^2 \leq \frac{q}{\rho(q)^2}
  \sum_{x\in A_q}\alpha(x)
  $$
  where~$\alpha(x)$ is the number of~$y\in A_q$ such that $ h\cdot
  y=h\cdot x\mods{q}$. By the Chinese Remainder Theorem
  $\alpha(x)$ is bounded by the product over $p^v \Vert
  q$ of the number of solutions to $h \cdot x = h \cdot y
  \mods{p^v}$, and this may be bounded by $\rho(p^v)$ if $h\equiv 0
  \mods{p^v}$ and the resulting hyperplane is degenerate, or by
  $\lambda(p^v)$ otherwise.  Thus
  $$
  \alpha(x)\leq \rho(q/\{ h,q\} )\lambda( \{h,q \})
  $$
  for  all~$x$, and the result follows.
\end{proof}

\begin{remark}
  Part~(1) is the crucial place where we use the fact that~$A_q$ is
  defined by the Chinese Remainder Theorem, while (2) is the only
  point where we detect any cancellation in the Weyl sums $W(h;q)$.
\end{remark}

\subsection{The Erd{\H o}s--Tur\' an inequality}  
We recall the $n$-dimensional Erd\H os--Tur\'an inequality for the
discrepancy of~$\Delta_q$ (see, e.g.,~\cite[Lemma 2]{gsz} for
references): for any integer~$H\geq 1$, we have
\begin{equation}\label{eq-et}
  \disc(\Delta_q)\ll \frac{1}{H}+\sum_{0<\|h\|\leq H} \frac{1}{M(h)}
  |W(h;q)|,
\end{equation}
where~$\|h\|=\max(|h_i|)$ and~$M(h)=\prod_i \max(1,|h_i|)$ and where
the implied constant depends only on~$n$.  We now record a consequence of Lemma \ref{lm-w} for terms
appearing in \eqref{eq-et}, and then use it to bound certain useful averages of $\disc(\Delta_q)$.

\begin{lemma} 
\label{lem11}  Let $q \in \cQ$ and $H\geq2$ be given.   Then 
$$ 
\frac{1}{q} \sum_{a\mods q} \sum_{0<\|h\| \leq H} \frac{1}{M(h)}
|W(ah;q)| \ll (\log H)^n \prod_{p^v \Vert q} \Big(
\frac{\sqrt{\lambda(p^v)}}{\sqrt{\rho(p^v)}} + \frac1{p^v}\Big),
$$ 
where the implied constant depends only on $n$. 
\end{lemma}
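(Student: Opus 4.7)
The plan is to apply Cauchy--Schwarz to the average over $a \mods{q}$, reducing to an $L^2$ bound that is controlled by Lemma~\ref{lm-w}(2), and then to evaluate the resulting sum over $h$ using the multiplicative structure of the quantity $\{h,q\}$.

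More precisely, for each nonzero $h$, Cauchy--Schwarz gives
$$
\frac{1}{q}\sum_{a\mods{q}} |W(ah;q)| \leq \Bigl(\frac{1}{q}\sum_{a\mods{q}}|W(ah;q)|^2\Bigr)^{1/2} \leq \sqrt{\frac{\lambda(\{h,q\})}{\rho(\{h,q\})}},
$$
where the second inequality is exactly Lemma~\ref{lm-w}(2). It therefore remains to prove
$$
T := \sum_{0<\|h\|\leq H} \frac{1}{M(h)}\sqrt{\frac{\lambda(\{h,q\})}{\rho(\{h,q\})}} \ll_n (\log H)^n \prod_{p^v\Vert q}\Bigl(\sqrt{\frac{\lambda(p^v)}{\rho(p^v)}}+\frac{1}{p^v}\Bigr).
$$

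For this second step, I organize the sum over $h$ by the following multiplicative structure. For each $h$, let $d = d(h)$ denote the product of all prime powers $p^v \Vert q$ such that $p^v$ divides every component of $h$; equivalently, $d$ is the unique divisor of $q$ with $\gcd(d,q/d)=1$ and $\{h,q\} = q/d$. By multiplicativity,
$$
\sqrt{\frac{\lambda(\{h,q\})}{\rho(\{h,q\})}} = \prod_{p^v\Vert q/d} \sqrt{\frac{\lambda(p^v)}{\rho(p^v)}}.
$$
I partition the $h$-sum by $d(h) = d$, relax this to the weaker condition that $d$ divides every component of $h$, and estimate the inner sum. Writing $h = dh'$, a direct computation gives $M(dh') = d^{|S(h')|}M(h')$, where $S(h') = \{i : h_i'\neq 0\}$; since $h\neq 0$ forces $|S(h')|\geq 1$, we gain the key factor $1/d$:
$$
\sum_{\substack{0<\|h\|\leq H\\ d\mid h}} \frac{1}{M(h)} \leq \frac{1}{d}\sum_{0<\|h'\|\leq H/d}\frac{1}{M(h')} \ll_n \frac{(\log H)^n}{d}.
$$

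Assembling the pieces and using the Euler-type factorization
$$
\sum_{\substack{d\mid q\\ (d,q/d)=1}}\frac{1}{d}\prod_{p^v\Vert q/d}\sqrt{\frac{\lambda(p^v)}{\rho(p^v)}} = \prod_{p^v\Vert q}\Bigl(\sqrt{\frac{\lambda(p^v)}{\rho(p^v)}}+\frac{1}{p^v}\Bigr)
$$
yields the desired bound. The argument is essentially routine once Lemma~\ref{lm-w}(2) is in hand; the only step deserving care is the elementary but crucial verification that every nontrivial $h$ divisible componentwise by $d$ satisfies $M(h) \geq d\,M(h/d)$, which is precisely what produces the arithmetic factor $1/p^v$ in the final product.
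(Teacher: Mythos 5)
Your proof is correct and follows essentially the same route as the paper: Cauchy--Schwarz over $a$ to reduce to the $L^2$ bound in Lemma~\ref{lm-w}(2), then decompose the $h$-sum according to the divisor determined by $\{h,q\}$, gain the factor $1/d$ from the inner sum over $h$, and reassemble by multiplicativity. The only cosmetic difference is that you parameterize by the complementary divisor $d=q/\{h,q\}$ and use the fact that $d$ divides every coordinate of $h$, whereas the paper parameterizes by $d=\{h,q\}$ and uses that some coordinate of $h$ is a nonzero multiple of $q/d$; both relaxations yield the same estimate.
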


\begin{proof}
  Applying the Cauchy--Schwarz inequality and~(\ref{eq-2}), we have
  \begin{align*}
    \frac{1}{q} \sum_{a\mods q} \sum_{0<\|h\| \leq H} \frac{1}{M(h)}
    |W(ah;q)| &\leq \sum_{\substack{ 0<\|h\| \leq H }} \frac{1}{M(h)}
                \Big( \frac{\lambda(\{h,q\})}{\rho(\{ h, q\} )}\Big)^{\frac 12}
    \\
              &=\sum_{\substack{ d|q \\ (d,q/d)=1}} \Big(\frac{\lambda(d)}{\rho(d)}\Big)^{\frac 12}
                \sum_{\substack{ 0<\|h\| \leq H \\ \{ q,h\} =d }} \frac{1}{M(h)}, 
  \end{align*}
  since $\{ h,q\} =d$ is possible only for those divisors of $d$ that are coprime to $q/d$.
  Observe that if $1 \le \| h \| \le H$ and $\{ h,q \}=d$, then at least one of the coordinates $h_i$ is a 
  non-zero multiple of $q/d$.   Therefore 
  $$
  \sum_{\substack{ 0<\|h\| \leq H \\ \{ q,h \} =d }} \frac{1}{M(h)} \leq
  \frac{d}{q} \sum_{0< \|h\|\leq H} \frac{1}{M(h)} \ll \frac{d}{q} (\log
    H)^n,
  $$ 
  and the lemma follows by multiplicativity.
\end{proof}

\begin{lemma}\label{lem4.2}
  Let $x$ be large, and $z$ be a real number in the range
  $e\le z \le x^{1/3}$.  Let $s \leq x^{\frac 13}$ be an integer with
  $s\in \cQ$ and such that all prime factors of $s$ are below $z$.
  Then, for any $H\geq 2$, we have
$$ 
\sum_{\substack{ r\leq x/s \\ rs \in \cQ \\ p| r \implies p>z }} \disc(\Delta_{rs}) \ll
\frac{x}{\varphi(s) \log z} \Big( \frac 1H + (\log H)^n \prod_{p^v \Vert s}
\Big( \frac{\sqrt{\lambda(p^v)}}{\sqrt{\rho(p^v)}} +\frac 1{p^v}\Big)\Big).
$$ 
\end{lemma}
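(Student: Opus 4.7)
The approach is to apply the Erdős--Turán inequality \eqref{eq-et}, giving
$$\disc(\Delta_{rs}) \ll \frac{1}{H} + \sum_{0<\|h\|\le H} \frac{|W(h;rs)|}{M(h)},$$
and then to exploit the multiplicativity of Weyl sums to separate the $r$ and $s$ aspects. The hypothesis $p\mid r\implies p>z$, combined with the fact that every prime factor of $s$ is at most $z$, forces $\gcd(r,s)=1$; since $rs\in\cQ$ forces $r,s\in\cQ$ by multiplicativity, Lemma \ref{lm-w}(1) factors the Weyl sum and yields
$$|W(h;rs)| = |W(\bar r h;s)|\,|W(\bar s h;r)| \le |W(\bar r h;s)|,$$
deliberately discarding the $r$-factor for which no direct control is available.

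Summing over $r$, the contribution of the term $1/H$ is at most $\frac{1}{H}\,\#\{r\le x/s:p\mid r\implies p>z\}\ll \frac{x/s}{H\log z}\le \frac{x}{\varphi(s)H\log z}$, which is of the claimed order. For the main term I would interchange the sums over $r$ and $h$ and, for each fixed $h$, regroup the $r$'s according to the residue class $c=\bar r\bmod s$ running over $(\Zz/s\Zz)^\times$. The number of $r\le x/s$ with $r\equiv c^{-1}\pmod{s}$ and $p\mid r\implies p>z$ is bounded uniformly in $c$ by
$$\ll \frac{x/s}{\varphi(s)\log z}$$
via a standard upper-bound sieve in arithmetic progressions (Selberg's $\Lambda^2$ method, or Brun--Titchmarsh applied to primes in the borderline range where the sifted $r$ is forced to have at most one prime factor).

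Inserting this estimate and extending the sum from $(\Zz/s\Zz)^\times$ to all of $\Zz/s\Zz$ (an upper bound, as the summands are non-negative), one obtains
$$\sum_r \sum_{0<\|h\|\le H} \frac{|W(h;rs)|}{M(h)} \ll \frac{x/s}{\varphi(s)\log z} \sum_{c\bmod s} \sum_{0<\|h\|\le H} \frac{|W(ch;s)|}{M(h)}.$$
Lemma \ref{lem11} applied to the modulus $s$ bounds the double sum on the right by
$$\ll s\cdot(\log H)^n \prod_{p^v\Vert s}\Bigl(\frac{\sqrt{\lambda(p^v)}}{\sqrt{\rho(p^v)}} + \frac{1}{p^v}\Bigr),$$
and the factor $s$ cancels the $1/s$ to leave exactly $x/(\varphi(s)\log z)$ as the leading constant, as claimed.

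The principal technical obstacle is the uniform sieve estimate in arithmetic progressions: since the hypotheses allow $s$ and $z$ each to be as large as $x^{1/3}$, the product $sz$ can approach $x^{2/3}$, putting us at the boundary of validity for standard sieve methods. One must therefore verify that the bound of correct order $(x/s)/(\varphi(s)\log z)$ persists uniformly in the residue class across the whole allowed range; in the borderline case where $r$ must be prime, this reduces to Brun--Titchmarsh with $\log(x/s^2)\ge \log z$, while in all other regimes the linear sieve applies directly.
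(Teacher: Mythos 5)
Your proof is correct and follows the same route as the paper's: Erd\H os--Tur\'an, then the twisted multiplicativity $W(h;rs)=W(\bar r h;s)\,W(\bar s h;r)$ with the $r$-factor discarded trivially, then grouping $r$ by the residue of $\bar r$ modulo $s$, a sieve bound uniform in the class, extension from $(\Zz/s\Zz)^\times$ to all of $\Zz/s\Zz$, and finally Lemma~\ref{lem11} applied to the modulus $s$. The technical point you flag about the sieve is real but resolves cleanly: since $s\le x^{1/3}$ and $z\le x^{1/3}$, one always has $z\le x^{1/3}\le x/s^2=(x/s)/s$, so the sieve level may be taken at $z\le D\le (x/s)/s$ and the Fundamental Lemma (or, as you note, Brun--Titchmarsh in the regime where the sifted $r$ is forced to be prime) gives the bound $\ll (x/s)/(\varphi(s)\log z)$ uniformly; the paper simply invokes ``the sieve'' without spelling this out.
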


\begin{proof}
  We apply the Erd{\H o}s-Tur\' an inequality \eqref{eq-et}.  Using
  the twisted multiplicativity from Lemma \ref{lm-w}, (1), which
  applies since $r$ and $s$ are coprime, we obtain
$$ 
\sum_{\substack{ r\leq x/s \\ rs \in \cQ \\ p|r \implies p>z }} \disc(\Delta_{rs}) \ll
\sum_{\substack{ r\leq x/s \\ rs \in \cQ \\ p|r \implies p>z }} \Big( \frac 1H + \sum_{0<
  \|h\|\leq H} \frac{1}{M(h)} |W(\overline{r}h;s) W(\overline{s}h;r)|
\Big).
$$

\par
We bound $|W(\overline{s}h;r)|$ trivially by $1$, and split the sum
over $r$ into (reduced) residue classes $r \equiv \bar{a}\mods s$.  If
$r \equiv \bar{a} \mods s$ then $W(\bar{r}h;s) = W(ah;s)$, so that
$$ 
\sum_{\substack{ r\leq x/s \\ rs \in \cQ \\ p|r \implies p>z }} \disc(\Delta_{rs}) \ll
\sum_{\substack{ a\mods s \\ (a,s)=1}} \Big( \frac 1H + \sum_{0<
  \|h\|\leq H} \frac{1}{M(h)} |W(ah;s)| \Big)
\sum_{\substack{ r\leq x/s \\ rs\in \cQ \\ p|r\implies p>z \\ 
    r\equiv \overline{a} \mods s}} 1.
$$ 
\par
Since $s\leq x^{\frac 13}$, it follows that $x/s \geq x^{\frac 23}$. Ignoring
the condition that~$rs\in\cQ$,  and using the
sieve, we find that 
$$
\sum_{\substack{ r\leq x/s  \\rs\in\cQ \\ p|r \implies p>z \\
    r\equiv \overline{a} \mods s}} 1 \le \sum_{\substack{ r\le x/s \\
    p|r \implies p> z \\ r\equiv \overline{a} \mods s}} 1 \ll
\frac{x/s}{\varphi(s) \log z}
$$
with an absolute implied constant.  Therefore
$$ 
\sum_{\substack{ r\leq x/s \\ rs \in \cQ \\ p|r \implies p>z }}
\disc(\Delta_{rs}) \ll
\frac{x}{\varphi(s) \log z} \frac{1}{s} \sum_{\substack{a\mods s\\
    (a,s)=1}} \Big( \frac 1H + \sum_{0< \|h\| \leq H} \frac{1}{M(h)}
|W(ah;s)|\Big).
$$ 
Extend the sum over $a$ to all $a\mods s$, and invoke Lemma
\ref{lem11} to conclude the proof.
\end{proof}

\section{Proof of Theorem \ref{thm2}}
\label{sec4}

Our goal is to estimate the sum
$$
\sum_{q\in\cQ(x)}\disc(\Delta_q), 
$$
in terms of the quantity
$$ 
P := \sum_{\substack{p\le x\\ \rho(p) \ge 1}} \Big(1 -\frac{\lambda(p)}{\rho(p)}\Big) \frac 1p.
$$ 
We may assume that $P\ge 10$, else there is nothing to prove, and put
$z= x^{1/P}$.  Below, we will factor any $q\in \cQ(x)$ as $q= rs$
where all the prime factors of $s$ are below $z$, and all the prime
factors of $r$ are above $z$.  Here the letters $r$ and $s$ are meant
to suggest the ``rough'' and ``smooth'' parts of $q$.\footnote{\
  French readers are invited to substitute~$f$ for~$s$ (``friable'')
  and~$c$ for~$r$ (``criblé'') throughout.}

Consider first the contribution of terms with $s\le x^{1/3}$.
Applying Lemma \ref{lem4.2} with $H=e^{P}$ we obtain
$$ 
\sum_{\substack{ q = rs \in \cQ(x) \\ s\le x^{1/3}} }
\disc(\Delta_{rs}) \ll \sum_{\substack{ s\le x^{1/3} \\ s\in \cQ}}
\frac{P x}{\phi(s) \log x} \Big( e^{-P} + P^n \prod_{p^v \Vert s} \Big(
\frac{\sqrt{\lambda(p^v)}}{\sqrt{\rho(p^v)}} +\frac 1{p^v}\Big)\Big).
$$ 
Note that 
\begin{align*}
\sum_{\substack{ s\le x^{1/3} \\ s\in \cQ}} \frac{1}{\phi(s)}& \le \prod_{p\le z} \Big( 1 +
\sum_{\substack{ v\ge 1 \\ p^v \in \cQ} } \frac{1}{p^{v-1}(p-1)} \Big)\ll 
\prod_{\substack{ p\le z \\ p \in \cQ}} \Big( 1+ \frac{1}{p-1}\Big)
\ll \prod_{\substack{ p\le x \\ p\in \cQ}} \Big(1+ \frac{1}{p}\Big).
\end{align*} 
Further note that 
\begin{multline*} 
  \sum_{\substack{ s\le x^{1/3} \\ s\in \cQ}} \frac{1}{\phi(s)}
  \prod_{p^v \Vert s} \Big(
  \frac{\sqrt{\lambda(p^v)}}{\sqrt{\rho(p^v)} }+ \frac 1{p^v} \Big)
  \le \prod_{ p\le z} \Big (1 + \sum_{\substack{ v\ge 1 \\ p^v \in
      \cQ} } \frac{1}{p^{v-1}(p-1)} \Big( \frac{\sqrt{\lambda(p^v)}}
  {\sqrt{\rho(p^v)}}  + \frac{1}{p^v} \Big)\Big) \\
  \ll \prod_{\substack{ p\le z \\ p\in \cQ}} \Big( 1+ \frac{1}{p-1}
  \Big( \frac{\sqrt{\lambda(p)}}{\sqrt{\rho(p)}} + \frac{1}{p} \Big)
  \Big) \ll \prod_{\substack{p\le x \\ p\in \cQ}} \Big( 1+ \frac{1}{p}
  \frac{\sqrt{\lambda(p)}}{\sqrt{\rho(p)}} \Big)
  \\
   \ll \prod_{\substack{p\le x \\p \in \cQ}} \Big(1 + \frac 1p \Big)
  \exp \Big(- \sum_{\substack{p\le x \\ p\in \cQ}}
  \Big(1-\frac{\sqrt{\lambda(p)}}{\sqrt{\rho(p)}}\Big) \frac 1p\Big),
\end{multline*}
and that, since $1-\sqrt{t} \ge (1-t)/2$ for $0\le t\le 1$,  
$$ 
\sum_{\substack{p\le x \\ p\in \cQ}} \Big(1-\frac{\sqrt{\lambda(p)}}{\sqrt{\rho(p)}}\Big) \frac 1p  \ge \frac 12 \sum_{\substack{p\le x \\ p\in \cQ}} \Big(1 -\frac{\lambda(p)}{\rho(p)}\Big) \frac 1p 
= \frac P2. 
$$
We conclude that 
\begin{equation} 
\label{04.1} 
\sum_{\substack{ q = rs \in \cQ(x) \\ s\le x^{1/3}} }
\disc(\Delta_{rs})
\ll \frac{x}{\log x}
\prod_{\substack{p\le x \\ p\in \cQ}} \Big(1+\frac{1}{p} \Big) 
\Big( P e^{-P} + P^{n+1} e^{-P/2}\Big) \ll |\cQ(x)| \frac{e^{-P/3}}{\alpha}, 
\end{equation} 
upon using Lemma \ref{lemQ1} and recalling that implied constants are
allowed to depend on $n$.

Now consider the contribution of terms $q=rs$ where $s>x^{1/3}$, so
that $r\le x^{2/3}$.  Using the trivial bound $\disc(\Delta_q) \le 1$,
we see that such terms contribute
$$ 
\sum_{\substack{q =rs \in \cQ(x) \\ s>x^{1/3}} } \disc(\Delta_q) \le
\sum_{\substack{ r\le x^{2/3} \\ r\in \cQ}} \sum_{\substack{ x^{1/3} <
    s \le x/r \\ s\in \cQ}} 1.
$$ 
Applying Lemma \ref{lemQ2}, this quantity is
\begin{multline*}
  \ll \sum_{\substack{ r\le x^{2/3} \\ r\in \cQ}} \frac{x/r}{\log x}
  \exp\Big( -\frac{\log (x/r)}{\log z}\Big) \prod_{\substack{p\le z \\
      p \in \cQ}} \Big(1+\frac 1p\Big)
  \ll \frac{x}{\log x} e^{-P/3} \prod_{\substack{p\le z \\ p \in \cQ}} \Big(1+\frac 1p\Big) \sum_{\substack{ r\le x^{2/3} \\ r\in \cQ}} \frac 1r \\
  \ll \frac{x}{\log x} e^{-P/3} \prod_{\substack{p\le x \\ p \in \cQ}}
  \Big(1+\frac 1p\Big) \ll |\cQ(x)| \frac{e^{-P/3}}{\alpha},
\end{multline*}
where we used Lemma \ref{lemQ1} in the last step.  Combining this
bound with \eqref{04.1}, we obtain Theorem~\ref{thm2}, hence also
Theorem~\ref{thm1}.

\section{The main technical result}\label{sec-proof}

In this section, we establish a general technical estimate, from which
the simpler (but less precise) Theorems~\ref{thm3} and \ref{thm4} will
be deduced in the next section.  In addition to $\cP(x)$ (defined in
\eqref{3.2}), we will use the quantity
\begin{gather}
  \label{4.1} 
  \tcP(x) =
  \sum_{\substack{ p\leq x \\ p \in \cQ}}
  \frac 1p  \Bigl(\frac{\lambda(p)}{\rho(p)} \Bigr)^{1/2} +3. 
\end{gather}
Since $\lambda(p) \le \rho(p)$, note that $\tcP(x) \le \cP(x)$.   

\begin{proposition}\label{lm-small}
  Suppose that Assumption \ref{Ass1} holds, and let $x$ be large in
  terms of $\alpha$ and~$x_0$.
  \par
  \emph{(1)} In the range~$k\leq \cP(x)$ 
\begin{equation}\label{eq-3}
 \frac{1}{|\cQ_k(x)|}  \sum_{q\in\cQ_k(x)} \disc(\Delta_q) \ll
  \frac{k^{6+n}}{\alpha} \Big( 
  \Big(\frac{\tcP(x)}{\cP(x)}\Big)^{\frac{k-1}{3}} +
  e^{-k/2} \Big( \frac{k}{\cP(x)}\Big)^{\frac{k-1}{2}}\Big).
\end{equation}
\par
\emph{(2)} In the range~$\cP(x)<k\leq \exp(\sqrt{\log\log x})$
\begin{equation}\label{eq-4}
  \frac{1}{|\cQ_k(x)|} \sum_{\substack{ q\in
      \cQ_k(x)}}\disc(\Delta_q) \ll
  \frac{1}{\alpha} \exp\Big( \frac{(6+n) k\log k}{\cP(x)} \Big) \Big( e^{-k/3} 
  + \Big( \frac{\tcP(x)}{\cP(x)}\Big)^{\frac k{3(1+\log (k/\cP(x)))}}\Big).
 \end{equation}
\end{proposition}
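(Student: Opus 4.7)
The overall strategy is to combine the Erdős--Turán reduction from Section~\ref{sec-prelim} with a careful bookkeeping of the number of prime factors of $q$, inspired by the proof of Theorem~\ref{thm2} in Section~\ref{sec4} but refined to take advantage of the constraint $q \in \cQ_k$.

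First, I would apply the Erdős--Turán inequality~\eqref{eq-et} with a parameter $H$, reducing the task to bounding
$$
\frac{|\cQ_k(x)|}{H} + \sum_{0 < \|h\| \leq H} \frac{1}{M(h)} \sum_{q \in \cQ_k(x)} |W(h;q)|.
$$
Fix a smoothness parameter $z$ and write each $q \in \cQ_k(x)$ uniquely as $q = rs$ with $s$ the $z$-smooth part, $s \in \cQ_j$, and $r$ the rough part, $r \in \cQ_{k-j}$ with all prime factors above $z$, for some $0 \leq j \leq k$. The twisted multiplicativity Lemma~\ref{lm-w}(1) gives $W(h;rs) = W(\bar{r} h; s) W(\bar{s} h; r)$, and I bound the rough factor trivially by $1$. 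Grouping the remaining sum over $r$ by residue class $r \equiv \bar{a} \mods{s}$ with $(a,s)=1$, and invoking a sieve estimate combined with Brun--Titchmarsh (as in the argument underlying Lemma~\ref{lem3.2}), controls, for each $a$, the number of rough $r \leq x/s$ with exactly $k-j$ prime factors by a quantity of order $(x/s)/\varphi(s)$ times a $(k-j)$-fold iterated partial sum of $1/p$ over primes $p > z$ in~$\cQ$.

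Second, the remaining sum $\sum_{a \bmod s}|W(ah;s)|$ is handled by Lemma~\ref{lem11}, producing a multiplicative factor
$$
(\log H)^n \prod_{p^v \Vert s}\Bigl(\frac{\sqrt{\lambda(p^v)}}{\sqrt{\rho(p^v)}} + \frac{1}{p^v}\Bigr).
$$
Summing over smooth $s \in \cQ_j$ by multiplicativity, the dominant contribution is of order $\tcP(z)^j/j!$, with $\tcP$ as in~\eqref{4.1}. Combined with the rough count of order $(\cP(x) - \cP(z) + O(1))^{k-j}/(k-j)!$ and the lower bound for $|\cQ_k(x)|$ from Lemma~\ref{lem3.1}, the contribution of a split $(j, k-j)$ is essentially
$$
\binom{k}{j}\Bigl(\frac{\tcP(z)}{\cP(x)}\Bigr)^j \Bigl(1 - \frac{\cP(z)}{\cP(x)} + O\bigl(\tfrac{\log k}{\cP(x)}\bigr)\Bigr)^{k-j}
$$
times polynomial factors in $k$ and a $(\log H)^n$ factor; summing over $j$ gives, heuristically, $(\tcP(z)/\cP(x) + 1 - \cP(z)/\cP(x))^k$.

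Third, the two cases of the proposition correspond to different optimal choices of $z$ and $H$. For $k \leq \cP(x)$ I would take $z = x$, concentrating all prime factors in the smooth part; this yields the main term $(\tcP(x)/\cP(x))^{(k-1)/3}$, while the error $e^{-k/2}(k/\cP(x))^{(k-1)/2}$ absorbs the Erdős--Turán tail $1/H$ together with a Poisson-type deviation from the dominant combinatorial weight. For $\cP(x) < k \leq \exp(\sqrt{\log\log x})$ one cannot take $z = x$ without losing too much in the rough count, and an optimization produces the weaker exponent $k/(3(1+\log(k/\cP(x))))$ together with the extra factor $\exp((6+n)k\log k/\cP(x))$ coming from $(\log H)^n \approx k^n$ and from $k^6$-type losses in the Lemma~\ref{lem3.2}-style counting. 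The main obstacle will be the careful accounting of the combinatorial factors $\binom{k}{j}$ and of the polynomial-in-$k$ losses throughout, and in particular verifying that the resulting bound remains nontrivial uniformly in $k$ up to $\exp(\sqrt{\log\log x})$; the transition between the two regimes at $k \approx \cP(x)$ is delicate, since for small $k$ one exploits concentration of prime factors in the smooth part, whereas for larger $k$ one must balance smooth and rough parts to keep both the Weyl-sum gain and the density of $\cQ_{k-j}$ under control.
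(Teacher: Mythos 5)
The overall skeleton of your argument matches the paper's: Erdős--Turán via Lemma~\ref{lem4.2}, the smooth/rough factorization $q=rs$, the twisted multiplicativity of Lemma~\ref{lm-w}(1), the lower bound from Lemma~\ref{lem3.1}, and the $\omega(r)\geq\kappa$ counting from Lemma~\ref{lem3.2} for large $k$. But there are two concrete problems with your sketch, one of which is a real gap.

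First, the choice $z=x$ for case (1) cannot work. With $z=x$ every $q\leq x$ is $z$-smooth, so $r=1$ always; then the averaging over residue classes $r\equiv\bar a\pmod s$ that underlies Lemma~\ref{lem4.2}, and the step where $\frac{1}{q}\sum_a|W(ah;q)|$ (rather than a single $|W(ah;q)|$) appears via Lemma~\ref{lem11}, is lost. Indeed Lemma~\ref{lem4.2} requires $z\leq x^{1/3}$ and its usefulness comes precisely from the sieve bound $\ll x/(\varphi(s)\log z)$ for rough $r>1$. The paper instead takes $z=x^{1/(4k)}$ throughout Section~\ref{sec-proof}, which is chosen so that $s\leq x^{1/3}$ typically (the $s>x^{1/3}$ tail is shown to be $O(x^{11/12+\eps})$ and discarded), and so that at least one prime factor of $q$ falls in the rough part. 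Without this the Weyl-sum gain simply isn't there.

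Second, your heuristic bound is incomplete in a way that masks the shape of the final estimate. You do not specify $H$, but the $1/H$ Erdős--Turán tail is in direct tension with the $(\log H)^n$ factor from Lemma~\ref{lem11}, and the paper's choice $H=(1+\cP(x)/\tcP(x))^k$ is exactly what produces the exponent $\frac{k-1}{3}$ rather than something closer to $k$. Your stated heuristic $\bigl(\tcP(z)/\cP(x)+1-\cP(z)/\cP(x)\bigr)^k$ is therefore too optimistic: it misses the loss from $1/H$, the discrepancy between $\tcP(z)$ and $\tcP(x)$, and the auxiliary cutoff that gives the second term $e^{-k/2}(k/\cP(x))^{(k-1)/2}$ (which in the paper corresponds to the subcase $2\tcP(x)-1\leq k\leq\cP(x)$, where the sum $\sum_{j\leq k-1}\tcP(x)^j/j!$ is bounded by $e^{\tcP(x)}$ rather than by its largest term). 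You cannot reach the stated bound without carrying out this optimization in $H$.

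Finally, a methodological remark: your plan to track $\omega(r)=k-j$ exactly with Landau/Brun--Titchmarsh counts is a legitimate refinement, but it is not what the paper does and adds unnecessary complication. Lemma~\ref{lem4.2} deliberately over-counts by summing over all rough $r$ regardless of $\omega(r)$; that over-count is cheap because the dominant $j$ is near $k-1$ anyway. The only place the paper splits by $\omega(r)$ is in part (2), via the single threshold $\kappa$ fed into Lemma~\ref{lem3.2}, rather than by a full decomposition over $k-j$. You would save effort by following that coarser bookkeeping.
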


Put $z= x^{1/(4k)}$ and factor $q \in \cQ_k(x)$ uniquely in the form
$q = rs$, where all prime factors of~$s$ are~$\leq z$ and all prime
factors of~$r$ are~$>z$.  Below, $r$ and~$s$ will always be assumed to
have this meaning.

We first dispense with a technical case, when $s> x^{\frac 13}$.
Since $s$ has at most $k$ prime factors which are all below
$x^{1/(4k)}$ it follows that if we write $s=s_1 s_2^2$ with $s_1$
squarefree, then $s_1 \le x^{1/4}$ and $s_2 > x^{1/12}$.  Since
$\disc(\Delta_q) \le 1$ for all~$q$, it follows that
\begin{equation} 
\label{eq-5} 
\sum_{\substack{ q \in Q_k(x) \\ s > x^{1/3} }} \disc(\Delta_q) \ll \sum_{ s> x^{1/3}} \frac{x}{s} \ll x^{\frac{11}{12}+\eps}
\end{equation}
for any~$\eps>0$.  Thus the contribution of such terms is negligible
compared to the bounds we seek, and may be discarded.  Henceforth, we
restrict attention to terms with $s \le x^{1/3}$.
 
\subsection{When $k$ is small: proof of part (1)}

In this case $k \leq \cP(x)$, so that $k\log k/{\cP(x)}\leq \log k$,
and Lemma~\ref{lem3.1}, together with Stirling's formula, yields
\begin{equation}\label{eq-31}
  |\cQ_k(x)| \gg k^{-4}\frac{\alpha x}{\log x}
  \frac{\cP(x)^{k-1}}{(k-1)!} \gg k^{-5} \frac{\alpha x}{\log x}
  \Big( \frac{e \cP(x)}{k}\Big)^{k-1}.
\end{equation}

Recall the factorization~$q=rs$, that $q$ has exactly $k$ prime
factors, and $s$ is assumed to be $\le x^{1/3}$.  If $\omega(s) =k$ then $r$ must be $1$, and
$q =s \le x^{\frac 13}$.  Since $\disc(\Delta_q) \le 1$ always,
such terms contribute at most $x^{\frac 13}$.  For the remaining terms
when $\omega(s) <k$, we apply for each~$s$ the bound arising from
Lemma \ref{lem4.2}.  Thus, using also \eqref{eq-5}, for any~$H\geq 2$,
\begin{equation} 
\label{4.11}
\sum_{q\in\cQ_k(x)} \disc(\Delta_q) \ll
x^{\frac {11}{12}+\epsilon} + \frac{kx}{\log x} \sum_{\substack{ s \in \cQ(x^{1/3}) \\
    \omega(s)\leq k-1}} \frac{1}{\varphi(s)} \Big( \frac{1}{H} + (\log
H)^n \prod_{p^v \Vert s} \Big( \frac{\sqrt{\lambda(p^v)}}{\sqrt{\rho(p^v)}} +
\frac 1{p^v} \Big) \Big).
\end{equation}

Observe that
$$ 
\sum_{\substack{ s\in \cQ(x^{1/3}) \\ \omega(s) \le k-1}} \frac{1}{\varphi(s)}
\le \sum_{j=0}^{k-1} \frac{1}{j!} \Big( \sum_{\substack{ p \in \cQ \\
    p\le z}} \frac{1}{p-1} + \sum_{\substack{ p\le z \\ v\ge 2}} \frac{1}{p^{v-1}(p-1)} \Big)^j \le \sum_{j=0}^{k-1} \frac{1}{j!}
\cP(x)^j,
$$
by summing according to the number~$j$ of prime factors of
$s$. Similarly
\begin{align*}
\sum_{\substack{ s\in \cQ(x^{1/3}) \\ \omega(s) \le k-1}} \frac{1}{\varphi(s)}
\prod_{p^v \Vert s} \Big( \frac{\sqrt{\lambda(p^v)}}{\sqrt{\rho(p^v)}} +
\frac{1}{p^v} \Big) &\le \sum_{j=0}^{k-1} \frac{1}{j!} \Big(
\sum_{\substack{ p \in \cQ \\ p\le z } } \frac{1}{p-1} \Big(
\frac{\sqrt{\lambda(p)}}{\sqrt{\rho(p)}} +\frac 1p\Big) + \sum_{\substack{p\le z \\ v\ge 2}} \frac{1}{\phi(p^{v})} \Big(1+\frac{1}{p^v}\Big) \Big)^j \\
&\le
\sum_{j=0}^{k-1} \frac{1}{j!} \tcP(x)^j.
\end{align*}
Therefore, from \eqref{4.11} it follows that
\begin{equation*} 
  \sum_{q\in\cQ_k(x)} \disc(\Delta_q)
  \ll x^{\frac{11}{12}+\eps}
  +  \frac{kx}{\log x} \sum_{j=0}^{k-1} \Big( \frac{1}{H}
  \frac{\cP(x)^j}{j!}
  + (\log H)^n \frac{\tcP(x)^j}{j!}\Big)
\end{equation*}
for any~$\eps>0$.  We choose here $H=(1+\cP(x)/\tcP(x))^{k}$ so that
for all $0\le j \le k-1$ one has $\cP(x)^j/H \le \tcP(x)^j$.  Noting
that
$$ 
(\log H)^n = \Big( k \log \Big( 1+ \frac{\cP(x)}{\tcP(x)}\Big)\Big)^n
\ll k^{n} \Big( \frac{\cP(x)}{\tcP(x)}\Big)^{\frac 1{10}},
$$ 
we conclude that 
\begin{equation} 
\label{4.2} 
\sum_{q\in\cQ_k(x)} \disc(\Delta_q) \ll \frac{k^{1+n} x}{\log x}  \Big( \frac{\cP(x)}{\tcP(x)}\Big)^{\frac 1{10}} \sum_{j=0}^{k-1} \frac{\tcP(x)^j}{j!}, 
\end{equation} 
where the term $x^{\frac{11}{12}+\eps}$ has been absorbed into the
much larger quantity displayed above (for~$\eps$ small enough).
 
Suppose first that $k \le 2 \tcP(x)-1$.  In the range
$0\le j \le k-1$, the quantity $\tcP(x)^j/j!$ attains its maximum at
some $j_0$ which lies in the range $k-1 \ge j_0 \ge (k-1)/2$.  Note
that, since $k\le \cP(x)$ 
$$ 
\frac{ \tcP(x)^{j_0}}{j_0!} \frac{(k-1)!}{\cP(x)^{k-1}} \le \frac{
  \tcP(x)^{j_0}}{j_0!} \frac{j_0!}{\cP(x)^{j_0}} \le \Big(
\frac{\tcP(x)}{\cP(x)}\Big)^{\frac {k-1}{2}}.
$$ 
Combining this with \eqref{eq-31} and \eqref{4.2}, we conclude that in
this range of $k$,
\begin{equation} 
\label{4.21}
\sum_{q\in \cQ_k(x)} \disc(\Delta_q) \ll |\cQ_k(x)| \frac{k^{6+n}}{\alpha} \Big( \frac{\cP(x)}{\tcP(x)}\Big)^{\frac 1{10}} \Big( \frac{\tcP(x)}{\cP(x)}\Big)^{\frac{k-1}{2}} \ll  |\cQ_k(x)| \frac{k^{6+n}}{\alpha}\Big( \frac{\tcP(x)}{\cP(x)}\Big)^{\frac{k-1}{3}}.
\end{equation} 
 
Suppose now that $\cP(x) \ge k \ge 2\tcP(x) -1$. Here we note that the
sum over $j$ in \eqref{4.2} is $\le \exp(\tcP(x)) \ll e^{(k-1)/2}$.
Moreover, since $\tcP(x)\ge 2$,
$$ 
e^{(k-1)/2} \Big( \frac{\cP(x)}{\tcP(x)}\Big)^{\frac 1{10}} \Big(
\frac{k}{e\cP(x)}\Big)^{k-1} \le k^{\frac 1{10}} e^{-(k-1)/2} \Big(
\frac{k}{\cP(x)}\Big)^{{k-1}- \frac 1{10}}.
$$  
Combining these observations with \eqref{eq-31} and \eqref{4.2}, we
find that in this range of $k$,
\begin{equation} 
\label{4.22} 
\sum_{q\in \cQ_k(x)} \disc(\Delta_q) \ll |\cQ_k(x)|  \frac{k^{6+n}}{\alpha} e^{-k/2} \Big( \frac{k}{\cP(x)}\Big)^{\frac{k-1}{2}}.  
\end{equation} 
The estimates \eqref{4.21} and \eqref{4.22} establish part (1) of Proposition \ref{lm-small}. 

\subsection{When $k$ is large: proof of part (2)}

Assume that $\cP(x)<k\leq \exp(\sqrt{\log\log x})$.
Let~$\kappa\leq k/3$ be a parameter to be fixed later.  For terms
$q=rs$ with $\omega(r) \ge \kappa$, note that $\disc(\Delta_q) \le 1$
trivially, and Lemma~\ref{lem3.2} gives a bound on the number of such
terms.  Thus
 \begin{align*} 
  \sum_{\substack{q\in \cQ_k(x)\\ \omega(r)\geq
  \kappa}}\disc(\Delta_q)
  \leq 
  \sum_{\substack{q\in\cQ_k(x)\\ \omega(r)\geq \kappa}}1
  & \ll \frac{kx}{\log x} \frac{\cP(x)^{k-1}}{(k-1)!}
    \exp\Big( \frac{2k\log k}{\cP(x)} -  \kappa\Big)
  \\
  &\ll |\cQ_k(x)|\frac{1}{\alpha}
    \exp\Big( \frac{7k\log k}{\cP(x)} -  \kappa\Big), 
\end{align*}
where we used the lower bound for $|\cQ_k(x)|$ arising from Lemma~\ref{lem3.1}, and the fact that $k\ge \cP(x)$.  

On the other hand, we estimate the contributions of those~$q$ for
which $\omega(r) <\kappa$ using Lemma \ref{lem4.2} exactly as in the
argument leading up to \eqref{4.2}, with the same choice of $H$ as
before.  Thus
$$
\sum_{\substack{q\in\cQ_k(x)\\
    \omega(r)<\kappa}}\disc(\Delta_q) \ll \frac{k^{1+n}x}{\log x}
\Big( \frac{\cP(x)}{\tcP(x)}\Big)^{\frac 1{10}}
\sum_{j=k-\kappa}^{k-1} \frac{\tcP(x)^j}{j!}.
$$
Now for each $k-\kappa \le j \le k-1$ note that, since $\kappa\le k/3$,  
$$ 
\frac{\tcP(x)^j}{j!} \frac{(k-1)!}{\cP(x)^{k-1}} \le
\Big(\frac{\tcP(x)}{\cP(x)}\Big)^{j} \Big(
\frac{k}{\cP(x)}\Big)^{k-1-j} \le \Big(
\frac{\tcP(x)}{\cP(x)}\Big)^{\frac {2k}{3} } \Big(
\frac{k}{\cP(x)}\Big)^{\kappa}.
$$
It follows that 
\begin{align*}
  \sum_{\substack{q\in\cQ_k(x)\\
  \omega(r)<\kappa}}\disc(\Delta_q)
  &\ll \frac{k^{2+n}x}{\log x} \frac{\cP(x)^{k-1}}{(k-1)!}  \Big( \frac{\tcP(x)}{\cP(x)}\Big)^{\frac {k}{2} }
    \Big( \frac{k}{\cP(x)}\Big)^{\kappa}\\
  & \ll |\cQ_k(x)| \frac{k^{2+n}}{\alpha} \exp\Big( \frac{4k\log k}{\cP(x)}\Big) \Big( \frac{\tcP(x)}{\cP(x)}\Big)^{\frac {k}{2} }
    \Big( \frac{k}{\cP(x)}\Big)^{\kappa}.
\end{align*}

Gathering together the bounds in the two cases $\omega(r) \ge \kappa$
and $\omega(r) <\kappa$, we conclude that
\begin{equation} 
\label{4.25} 
\sum_{q\in \cQ_k(x)} \disc(\Delta_q) \ll \frac{|\cQ_k(x)|}{\alpha}
\exp\Big( \frac{(6+n) k\log k}{\cP(x)} \Big) \Big( \exp(-\kappa) + 
\Big( \frac{\tcP(x)}{\cP(x)}\Big)^{\frac k2} \Big( \frac{k}{\cP(x)}\Big)^\kappa \Big). 
\end{equation} 
Choose 
$$ 
\kappa = \min \Big( \frac k3, \frac{k}{3(1+\log (k/\cP(x)))} \log \frac{\cP(x)}{\tcP(x)}\Big). 
$$ 
A small calculation then allows us to bound the right side of \eqref{4.25} by 
$$ 
\ll \frac{|\cQ_k(x)|}{\alpha} \exp\Big( \frac{(6+n) k\log k}{\cP(x)}
\Big) \Big( e^{-k/3} + \Big( \frac{\tcP(x)}{\cP(x)}\Big)^{\frac
  k{3(1+\log (k/\cP(x)))}}\Big).
$$ 
This completes the proof of~(\ref{eq-4}), hence that of
Proposition~\ref{lm-small}.

\section{Proof of Theorems \ref{thm3} and \ref{thm4}} 
\label{sec6}

\subsection{Proof of Theorem \ref{thm3}}

From the assumption~(\ref{eq-delta}) of Theorem \ref{thm3}, and since
$1-\sqrt{t} \ge (1-t)/2$ for $0\le t\le 1$, it follows that
$$ 
\cP(x) - \tcP(x) = \sum_{\substack{ p \le x \\ p\in \cQ}} \Big( 1-
\frac{\sqrt{\lambda(p)}}{\sqrt{\rho(p)}} \Big) \frac1p \ge \frac 12
\sum_{\substack{ p \le x \\ p\in \cQ}} \Big( 1-
\frac{\lambda(p)}{\rho(p)}\Big) \frac 1p \ge \frac{\delta}{2} \log
\log x.
$$
\par
Since $\cP(x) \le \log \log x +O(1)$, we conclude that
$$ 
\frac{\tcP(x)}{\cP(x)} \le 1 - \frac{\delta \log \log x}{2 \cP(x)} \le 1 -\frac{\delta}{3} \le e^{-\delta/3}. 
$$ 

In the range $k\le \cP(x)$, part (1) of Proposition \ref{lm-small} now gives 
$$ 
\frac{1}{|\cQ_k(x)|} \sum_{q\in \cQ_k(x)} \disc(\Delta_q) \ll
\frac{k^{6+n}}{\alpha} e^{-k\delta/9} \ll \frac{1}{\alpha}
e^{-k\delta/{18}},
$$ 
where the last step follows because
$k \ge 20 \delta^{-1} (6+n) \log (20\delta^{-1}(6+n))$.

In the range
$$
\cP(x) < k \le \exp\Big( \Bigl(\frac{\alpha \delta \log \log
    x}{20(6+n)}\Bigr)^{1/2} \Big),
$$ 
we use part (2) of Proposition \ref{lm-small}.  Since
$\cP(x) \ge \alpha\log \log x +O(1)$, the upper bound on $k$ yields
$$ 
\exp\Big( \frac{(6+n)k\log k}{\cP(x)}\Big) \ll \exp\Big(
\frac{\delta}{18} \frac{k}{(1+\log (k/\cP(x)))}\Big),
$$  
and so part (2) gives
\begin{align*} 
  \frac{1}{|\cQ_k(x)|} \sum_{q\in \cQ_k(x)} \disc(\Delta_q) &\ll \frac 1\alpha \exp\Big( \frac{(6+n)k\log k}{\cP(x)}\Big)\Big( e^{-k/3} +  \big( e^{-\delta/3} \big)^{\frac{k}{3(1+\log (k/\cP(x)))}}  \Big) \\ 
                                                            & \ll \frac{1}{\alpha} \exp\Big( - \frac{\delta k}{18 (1+\log (k/\cP(x)))} \Big)  \ll \frac 1\alpha (\log x)^{-\alpha \delta/18}, 
\end{align*} 
where the last step follows because
$k/(1+\log (k/\cP(x))) \ge \cP(x) \ge \alpha\log \log x +O(1)$.  This
completes the proof of Theorem \ref{thm3}.

\subsection{Proof of Theorem \ref{thm4}}

By the Cauchy-Schwarz inequality and the assumption~(\ref{eq-series})
in Theorem \ref{thm4}, we see that
$$ 
\sum_{\substack{ p\le x\\ p\in \cQ}} \frac{1}{p}
\frac{\sqrt{\lambda(p)}}{\sqrt{\rho(p)}} \le \Big( \sum_{\substack{
    p\le x\\ p\in \cQ}} \frac{1}{p}\Big)^{\frac 12} \Big(
\sum_{\substack{ p\le x\\ p\in \cQ}} \frac{1}{p}
\frac{\lambda(p)}{\rho(p)}\Big)^{\frac 12} \le \sqrt{\delta}
\sum_{\substack{ p\le x\\ p\in \cQ}} \frac{1}{p}.
$$ 
Therefore, with the notation of Proposition \ref{lm-small}
$$
\frac{\tcP(x)}{\cP(x)} \le \sqrt{\delta} + O\Big( \frac{1}{\alpha \log
  \log x}\Big) \le \delta^{1/3},
$$ 
upon using that $\cP(x) \ge \alpha \log \log x +O(1)$ and that $x$ is
large in terms of $\alpha$, while $\delta \ge 1/ \log \log x$ (by
assumption again).  Now part (1) of Proposition \ref{lm-small} implies
that for $k \le \alpha \delta \log \log x+O(1)$ one has
$$ 
\frac{1}{|\cQ_k(x)|} \sum_{q\in \cQ_k(x)} \disc(\Delta_q)  \ll \frac{k^{6+n}}{\alpha} \Big( \delta^{(k-1)/9} + e^{-k/2} \delta^{(k-1)/2} \Big) 
\ll \frac{1}{\alpha} \delta^{(k-1)/10},  
$$ 
which establishes Theorem \ref{thm4}.

\section{Remarks on exponential sums}
\label{sec-remarks}


The method described above may be placed in a more general context as
follows.  Suppose we are given a function $V$ that associates to each
prime $p$ and each reduced residue class $a \pmod p$ a complex number
$V(a;p)$.  Extend this to a function $V(a;q)$ where $q$ is square-free
and $a\pmod q$ is a reduced residue class by ``twisted
multiplicativity'': that is, if $q=q_1 q_2$ with $(q_1, q_2)=1$ then
\begin{equation} 
\label{7.1} 
V(a;q_1 q_2) = V(a \bar{q_1}; q_2) V(a \bar{q_2}; q_1).
\end{equation} 
Set $V(a;q)=0$ if $q$ is not square-free, or if $(a,q)>1$.  
For each prime $p$ let $G(p) \ge 0$ be such that 
\begin{equation} 
\label{7.2} 
\max_{(a, p)=1} |V(a,p)| \le G(p),
\end{equation} 
Extend $G$ to all square-free integers using multiplicativity.  The
problem is then to obtain a bound for
$$
\sum_{q\le x} |V(a;q)|
$$
(for a fixed integer~$a\geq 1$) which is better than the trivial bound
$$
\sum_{q\le x} |V(a;q)|\leq \sum_{q\le x} G(q).
$$


\begin{remark}
  Our work in Theorem \ref{thm2} fits into this framework by taking
  $V(a,p)$ to be the normalized Weyl sums $W(ah;p)$ for some fixed
  non-zero~$h$.  The twisted multiplicativity \eqref{7.1} was
  established in part (1) of Lemma \ref{lm-w}.
\end{remark} 



Another very natural class of examples fitting this generalized
framework arises from exponential sums. Let~$f_1$ and~$f_2$ be monic
integral polynomials, with~$f_2$ non-zero. For any squarefree
number~$q$, we define $V(a;q)=0$ if there exists~$p\mid q$ such that
$f_2=0\mods{p}$, and otherwise, we put
$$
V(a;q) = \frac{1}{\sqrt{q}} \sum_{\substack{ n \mods{q}\\
    f_2(n)\not=0}} e\Bigl(\frac{af_1(n)\overline{f_2(n)}}{q}\Bigr).
$$
These satisfy the relation \eqref{7.1}. Using the Weil estimates for
additive exponential sums modulo primes, one can take~$G(p)=c_{f_1,f_2}$
for some integer constant depending only on the degree and number of
zeros of~$f_1$ and~$f_2$ (in particular independent of~$p$).
\par
The problem of obtaining non-trivial estimates for
$$
\sum_{q\leq x}|V(1;q)|
$$
in this case has already been addressed in depth by Fouvry and
Michel~\cite{fouvry-michel}, and the special case of Kloosterman sums
(namely, $f_1=X^2+1$ and $f_2=X$) is briefly mentioned by
Hooley~\cite[\S 3]{hooley}.   One can extend some aspects of the work of Fouvry and Michel, but as 
this is of a different nature from the present paper, we defer
further consideration to another note~\cite{ks}.

\appendix

\section{Conjectures modulo prime moduli and a function field
  analogue}

As discussed in the introduction, one of the motivating problems is
that of the distribution of the roots of polynomial congruences to
prime moduli.  This can be interpreted in (at least) two ways,
depending whether one uses the same measures as in Theorem~\ref{thm1},
or Hooley's measures as in Section~\ref{sec-hooley}. For completeness,
we state formally the two potential conjectures (which are most likely
both correct), and discuss a function field analogue that tends to
indicate that, in this case, Hooley's measures are in some sense more
natural.

Let~$f\in\Zz[X]$ be a monic irreducible polynomial of degree~$\geq 2$,
and let~$\Pi_f(x)$ be the set of primes~$p\leq x$ such that the
number~$\rho_f(p)$ of roots of~$f$ modulo~$p$ is at
least~$1$. Let~$\Delta_p$ be the usual probability measure on the set
of roots of~$f$ modulo~$p$.

The first conjecture, analogue of the qualitative form of
Theorem~\ref{thm1}, is:

\begin{conjecture}\label{conj-1bis}
  Let~$f\in\Zz[X]$ be a monic irreducible polynomial of
  degree~$\geq 2$. Then the measures
  $$
  \frac{1}{|\Pi_f(x)|}\sum_{\substack{p\leq x\\p\in\cQ}}\Delta_p
  $$
  converge to the uniform measure as~$x\to+\infty$.
\end{conjecture}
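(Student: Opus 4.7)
The plan is to apply Weyl's equidistribution criterion: it suffices to show that for every non-zero integer $h$, the Weyl sums
\[
S_h(x) \;=\; \frac{1}{|\Pi_f(x)|}\sum_{p\in\Pi_f(x)} \frac{1}{\rho_f(p)}\sum_{\substack{a\bmod p\\ f(a)\equiv 0\bmod p}} e\Bigl(\frac{ha}{p}\Bigr)
\]
tend to $0$ as $x\to\infty$. By the Chebotarev density theorem $|\Pi_f(x)|\gg x/\log x$, so this reduces to obtaining power-saving cancellation in the unweighted prime sums $\sum_{p\leq x}\sum_{f(a)\equiv 0 \bmod p} e(ha/p)$.

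The natural strategy is to detect the indicator function of the primes by a combinatorial identity of Vaughan or Heath-Brown type, producing Type I sums (where the modulus carries a short auxiliary summation) and Type II bilinear sums. The Type I contributions can be treated by completing the sum and invoking the Weil bound for the complete exponential sums $\sum_{f(a)\equiv 0 \bmod n} e(ha/n)$, which enjoy square-root cancellation on average over $n$. The Type II sums, after Cauchy--Schwarz, demand cancellation in second moments of the shape
\[
\sum_{n\sim N}\; \Bigl|\sum_{\substack{a\bmod n\\ f(a)\equiv 0\bmod n}} e(ha/n)\Bigr|^{2},
\]
which, upon opening the square, reduce to questions about the distribution of differences of roots of $f\equiv 0$ to varying moduli.

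The main obstacle is precisely this Type II estimate. For $\deg f=2$, Duke--Friedlander--Iwaniec (and Toth) parametrize the roots of $f(x)\equiv 0\bmod p$ by representations of integers by binary quadratic forms, thereby reducing the Weyl sums to averages of Kloosterman sums, which are controlled by the spectral theory of automorphic forms on $\mathrm{GL}_2$ via the Kuznetsov formula. For $\deg f\geq 3$ no analogous parametrization is known and no comparable automorphic input is currently available, which is why the conjecture remains open. Crucially, the Chinese Remainder Theorem averaging developed in this paper cannot be restricted to prime moduli without destroying the entire saving, because primes form a density-zero subset of $\cQ$ and the arguments of Sections~\ref{sec-prelim}--\ref{sec6} exploit the abundance of composite $q$ with many small prime factors.

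A more modest but perhaps accessible target would be to establish $S_h(x) = o(1)$ uniformly in $|h|\leq (\log x)^A$; via the Erdős--Tur\'an inequality this would already yield equidistribution, albeit at an arbitrarily slow rate. Even this weaker statement requires genuinely new input on correlations of roots of higher-degree polynomial congruences to prime moduli, and should therefore be viewed as a substantial open problem rather than a routine extension of the techniques of the present paper.
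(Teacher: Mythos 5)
The statement you were asked about is labelled as a \emph{conjecture} in the paper, and the paper does not prove it — it is presented precisely as an open problem, with the degree-$2$ case due to Duke--Friedlander--Iwaniec and Tóth cited as the only known instance. You correctly recognized this rather than attempting a spurious proof, and your diagnosis of the obstruction is accurate: the Chinese Remainder Theorem mixing that powers Theorems~\ref{thm1}--\ref{thm4} is genuinely unavailable once one restricts to prime moduli, since it is the abundance of composite $q$ with many small coprime factors that produces the cancellation via the twisted multiplicativity of Lemma~\ref{lm-w}(1), and primes are a density-zero subset of $\cQ$ with no such factorizations. Your sketch of the standard strategy — Weyl sums, Vaughan/Heath-Brown decomposition, Weil bounds for Type~I, and the missing Type~II input that in degree $2$ comes from the Kuznetsov formula on $\mathrm{GL}_2$ — is a fair summary of the state of the art and of why the problem is open for degree~$\geq 3$. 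The one thing you might have added is that the paper's Appendix offers indirect evidence for the conjecture by establishing a function field analogue: there the Weyl sums~(\ref{eq-c1}) are shown to tend to zero as $p\to\infty$ using trace functions and the Riemann Hypothesis over finite fields (under a mild monicity hypothesis on $f$), though the authors note that the Hooley-normalized sums~(\ref{eq-c2}) converge more readily, suggesting that Conjecture~\ref{conj-2bis} may be the more natural formulation. In short: there is no gap in your reasoning because there was no proof to give, and your assessment that this requires new arithmetic input beyond the methods of the present paper is exactly the authors' own view.
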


Note that~$|\Pi_f(x)|\sim c\pi(x)$ for some constant~$c>0$, namely the
proportion of elements of the Galois group of the splitting field
of~$f$ which have a fixed point, when viewed as permutations of
the~$n$ roots of~$f$.

Using Hooley's measures, the natural conjecture (which is stated
in~\cite{dfi} for instance) is:

\begin{conjecture}\label{conj-2bis}
  Let~$f\in\Zz[X]$ be a monic irreducible polynomial of
  degree~$\geq 2$. Then the measures
$$
\frac{1}{\pi(x)}\sum_{\substack{p\leq x\\p\in\cQ}}\rho_f(p)\Delta_p
$$
converge to the uniform measure.
\end{conjecture}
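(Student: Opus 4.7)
The plan is to apply Weyl's criterion and reduce Conjecture~\ref{conj-2bis} to showing that for every fixed nonzero integer $h$,
$$
S_h(x) := \sum_{\substack{p\leq x\\p\in\cQ}}\sum_{\substack{\alpha\mods{p}\\f(\alpha)\equiv 0\mods{p}}}e\Bigl(\frac{h\alpha}{p}\Bigr) = o(\pi(x))
\qquad (x\to+\infty).
$$
The weighting by $\rho_f(p)$ in the conjecture cancels the normalizing factor $1/\rho_f(p)$ in each Weyl sum $W(h;p)$, so that $S_h(x)$ is an honest additive exponential sum over the zero-dimensional scheme cut out by $f$ modulo $p$, summed over primes. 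Convergence to the uniform measure is equivalent to $S_h(x)=o(\pi(x))$ for every $h\neq 0$.

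For $d=\deg(f)=2$, I would invoke the theorem of Duke, Friedlander and Iwaniec~\cite{dfi} (extended in~\cite{toth} to arbitrary irreducible quadratics), which provides $S_h(x)=o(\pi(x))$ via the Kuznetsov trace formula and subconvex bounds for Fourier coefficients of half-integral weight Maass forms. This settles the conjecture in the quadratic case, and is the direct analogue in the ``prime moduli'' setting of what Theorem~\ref{thm1} accomplishes for composite moduli averaged over $\cQ(x)$.

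For $d\geq 3$ I would attempt a combinatorial identity approach: detect primes by a Vaughan- or Heath--Brown-type identity and reduce $S_h(x)$ to bilinear sums of the shape
$$
\sum_{m\sim M}\sum_{n\sim N} a_m\, b_n\!\!\sum_{\substack{\alpha\mods{mn}\\ f(\alpha)\equiv 0\mods{mn}}}\!\! e\Bigl(\frac{h\alpha}{mn}\Bigr),\qquad MN\asymp x.
$$
On Type-II ranges (both $M$ and $N$ a small power of $x$), the twisted multiplicativity of the Weyl sums from Lemma~\ref{lm-w}(1), combined with Cauchy--Schwarz in one of the variables and the Weil bound on the diagonal correlation sums obtained after opening the square, should deliver genuine cancellation; this is the precise place where the Chinese Remainder mixing exploited throughout the present paper (and implicit in Hooley's work) enters. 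On these ranges the moduli $mn$ are composite, so Theorem~\ref{thm1} and its quantitative reformulation in Proposition~\ref{lm-small} are morally available, suitably adapted to the weighting by $\rho_f(mn)$.

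The main obstacle will be the Type-I contribution, where one of the variables degenerates and one is essentially forced to estimate $S_h(x)$ (or a sum extremely close to it) with no averaging other than over the prime $p$. For $d=2$ the DFI spectral machinery provides exactly the needed Type-I input, but for $d\geq 3$ no substitute is known: the natural ``roots modulo $p$'' variety has no automorphic model for $\operatorname{GL}_n$ with $n\geq 3$ of the half-integral weight type, and direct $\ell$-adic/Weil estimates fail because the relevant exponential sum sits on a zero-dimensional, highly degenerate stratum. This is precisely why Conjecture~\ref{conj-2bis} is open beyond the quadratic case, and any genuine proof will require a fundamentally new spectral input or a new averaging mechanism over prime moduli; the framework above only reduces the conjecture cleanly to the Type-I problem, without resolving it.
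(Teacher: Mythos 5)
You have correctly recognized that the statement is a conjecture, not a theorem: the paper does not prove it, and explicitly attributes it to~\cite{dfi} as the natural expectation for prime moduli. Your reduction via Weyl's criterion to $S_h(x)=o(\pi(x))$ is accurate (the total mass of the measure tends to~$1$ by the Chebotarev density theorem, since the average number of fixed points of a transitive permutation group is~$1$, as the paper notes), and your account of the state of the art --- settled for $d=2$ by the spectral methods of Duke--Friedlander--Iwaniec~\cite{dfi} and Tóth~\cite{toth}, open for $d\geq 3$ because the Type-I contribution has no automorphic or $\ell$-adic handle --- matches what the paper says in its introduction. Your proposal is thus an honest non-proof: it reduces the conjecture to the Type-I problem and correctly leaves that problem unresolved.

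Two points of comparison with what the paper actually does. First, the paper's only affirmative content touching this conjecture is the function-field analogue in the appendix: there the Weyl sums~\eqref{eq-c2} corresponding to the $\rho_f$-weighted (Hooley-type) measure of Conjecture~\ref{conj-2bis} are shown to vanish as $p\to\infty$ directly from the Riemann Hypothesis for curves, whereas the unweighted analogue~\eqref{eq-c1} of Conjecture~\ref{conj-1bis} requires a more delicate Galois-theoretic argument; this is the paper's heuristic for why the $\rho_f$-weighted normalization is ``more natural.'' You do not touch this, which is fine since it is heuristic rather than proof. Second, a caution about your Type-II sketch: you say Theorem~\ref{thm1} and Proposition~\ref{lm-small} are ``morally available, suitably adapted to the weighting by $\rho_f(mn)$,'' but Proposition~\ref{pr-counter} in the paper shows that the $\rho$-weighted analogue of Theorem~\ref{thm1} is \emph{false} for general CRT-defined sets. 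It only survives for polynomial roots because $\rho_f(q)\leq\deg f$ is uniformly bounded (the hypothesis used in the paper's Remark~(1) after Proposition~\ref{pr-counter} to recover Hooley's theorem). Any genuine adaptation to the bilinear setting must invoke this boundedness explicitly; as written, your appeal to the unweighted results would not transfer.
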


Here the normalization by~$\pi(x)$ is asymptotically correct, and
corresponds to the fact that the average number of fixed points of a
transitive permutation group is~$1$.

\begin{remark}
  Hrushovski also asked~\cite[\S 4.4]{hrushovski} if the fractional
  parts of roots of polynomial congruences are equidistributed modulo
  primes~$p$ restricted to have~$\rho_f(p)$ equal to a fixed
  integer~$r\geq 2$, in the case where the Galois group of the
  splitting field of~$f$ is cyclic. The version modulo all
  squarefree~$q$ follows easily from Theorem~\ref{thm2}, for all~$f$
  and all~$r\geq 2$ such that the Galois group of the splitting
  contains at least one permutation which has~$r$ fixed points when
  acting on the complex roots of~$f$.
\end{remark}

In order to determine which of the two conjectures is more natural, we
look at a function field analogue.

Let~$f\in\Zz[X,Y]$ be a polynomial which is irreducible in~$\Cc[X,Y]$,
of degree~$\geq 2$ with respect to~$Y$ and~$\geq 1$ with respect
to~$X$.

For any prime~$p$ large enough, the reduction of~$f$ modulo~$p$ will
be absolutely irreducible in~$\Ff_p[X,Y]$; below we only consider such
primes.

One analogue of looking at primes $\leq x$ is to consider
irreducible polynomials~$\pi$ in~$\Ff_p[X]$ of bounded degree. The
roots of a polynomial congruence modulo a given prime correspond then
to the roots in~$k=\Ff_p[X]/\pi\Ff_p[X]$ of the
polynomial~$f\mods{\pi}$, viewed as an element of~$k[Y]$.

To simplify the discussion, we will look at polynomials~$\pi$ of
degree~$1$, i.e., $\pi=X-x$ for $x\in\Ff_p$, but we will then
let~$p\to+\infty$ (this is possible since we started with a polynomial
$f\in\Zz[X,Y]$).  Then, for a given~$\pi=X-x$, we look at the
roots~$y$ of~$f\mods{\pi}$ that belong
to~$\Ff_p[X]/(X-x)\Ff_p[X]\simeq \Ff_p$, i.e., we look at~$y\in\Ff_p$
such that~$f(x,y)=0\in\Ff_p$.

Now the Weyl sums to consider for the analogue of
Conjecture~\ref{conj-1bis} are
\begin{equation}\label{eq-c1}
  \frac{1}{Z_p}\sum_{\substack{x\in\Ff_p\\C_x\not=\emptyset}}
  \frac{1}{|C_x|}\sum_{\substack{y\in\Ff_p\\f(x,y)=0}}e\Bigl(\frac{hy}{p}\Bigr),
\end{equation}
where
\begin{align*}
  C_x&=\{y\in\Ff_p\,\mid\, f(x,y)=0\},
  \\
  Z_p&=|\{x\in\Ff_p\,\mid\, C_x\not=\emptyset\}|,
\end{align*}
and those for the analogue of Conjecture~\ref{conj-2bis} are
\begin{equation}\label{eq-c2}
  \frac{1}{p}\sum_{x\in\Ff_p}
  \sum_{\substack{y\in\Ff_p\\f(x,y)=0}}e\Bigl(\frac{hy}{p}\Bigr),
\end{equation}
both for~$h\in\Zz$ non-zero (it is a consequence of the Riemann
Hypothesis for curves over finite fields that~$p$ is asymptotically
the correct normalization here; this depends on the fact that~$f$ is
absolutely irreducible).

As it turns out, the sums in~(\ref{eq-c2}) converge to~$0$
as~$p\to+\infty$ essentially without further conditions, and those
in~(\ref{eq-c1}) do so at least in considerable generality, but the
argument is less straightforward in that case.
\par
\medskip
\par
\textbf{Convergence of~(\ref{eq-c2})}. It is a standard fact (see
e.g.~\cite{gsz}) that if~$f$ has degree~$\geq 2$ with respect to~$Y$,
then as~$p\to+\infty$, the fractional parts
$(\{x/p\},\{y,p\})\in(\Rr/\Zz)^2$ of the points~$(x,y)\in C(\Ff_p)$ of
the plane algebraic curve defined by the equation $f(x,y)=0$ become
equidistributed with respect to the uniform measure, and moreover, the
Riemann Hypothesis for curves implies that
$$
|C(\Ff_p)|=p+O(p^{1/2})
$$
as~$p\to+\infty$. This implies (more than) the convergence to~$0$ of
the Weyl sums in~(\ref{eq-c2}).
\par
\medskip
\par
\textbf{Convergence of~(\ref{eq-c1})}. We split the sum according to
the value of~$|C_x|$, which is an integer~$\leq d=\deg_Y(f)$. We get
$$
\frac{1}{Z_p}\sum_{\substack{x\in\Ff_p\\C_x\not=\emptyset}}
\frac{1}{|C_x|}\sum_{\substack{y\in\Ff_p\\f(x,y)=0}}e\Bigl(\frac{hy}{p}\Bigr)
=\frac{1}{Z_p} \sum_{1\leq k\leq d} \frac{1}{k}
\sum_{\substack{x\in\Ff_p\\|C_x|=k}}
\sum_{\substack{y\in\Ff_p\\f(x,y)=0}}e\Bigl(\frac{hy}{p}\Bigr).
$$
\par
Fix~$k$. The characteristic function~$\varphi_k$ of the set
of~$x\in\Ff_p$ such that~$|C_x|=k$ can be represented in the form
$$
\varphi_k(x)=\sum_{j\in J}\alpha(k,j)t_j(x;p)
$$
where~$J$ is a finite set and ~$\alpha(k,j)$ are complex coefficients,
both of which are independent of~$p$, and where~$t_{j}(x;p)$ is a
trace function modulo~$p$ of conductor bounded in terms of~$f$ only
(more precisely, this formula holds for all~$x$ except possibly
boundedly many exceptional values where the covering
$\pi\colon C\to \Aa^1$ given by~$(x,y)\to x$ is ramified, and it is
obtained from Galois theory, the set~$J$ being the set of irreducible
representations of the Galois group~$G_{\pi}\subset S_d$ of~$\pi$,
and~$\alpha(k,j)$ the Fourier coefficients of the characteristic
function of those~$\sigma\in G_{\pi}$ with precisely~$k$ fixed points;
see, e.g.,~\cite[\S 10.2]{fkm2} for similar computations). Hence
$$
\sum_{\substack{x\in\Ff_p\\|C_x|=k}}
\sum_{\substack{y\in\Ff_p\\f(x,y)=0}}e\Bigl(\frac{hy}{p}\Bigr)=
\sum_{j\in J}\alpha(k,j) \sum_{x\in\Ff_p}t_j(x;p)
\sum_{\substack{y\in\Ff_p\\f(x,y)=0}}e\Bigl(\frac{hy}{p}\Bigr)+O(1).
$$
But the function
$$
g(x)=\sum_{\substack{y\in\Ff_p\\f(x,y)=0}}e\Bigl(\frac{hy}{p}\Bigr)
$$
is itself a trace function with conductor bounded in terms of~$f$
only, and moreover it is lisse and pure of weight~$1$ on an open dense
subset of~$\Aa^1$.

Now, note that for~$p$ large enough, all the trace functions~$t_j$ are
associated to sheaves that are everywhere tamely ramified (see
again~\cite[\S 10.2]{fkm2}). On the other hand, if we assume that~$f$
is monic with respect to~$X$, then one can check\footnote{\ We thank
  W. Sawin for clarifying this argument.} that for~$p$ large enough,
the monodromy representation at infinity of the sheaf underlying~$g$
is totally wildly ramified. Consequently, no geometrically irreducible
component of~$g$ can then be geometrically isomorphic to any of the
trace functions~$t_j$. Applying then the Riemann Hypothesis over
finite fields (in a form like~\cite[Prop. 1.8]{kms}), we have
$$
\sum_{x\in\Ff_p}t_j(x;p)g(x)\ll p^{1/2},
$$
where the implied constant depends only on~$f$ (because the conductors
of~$t_j$ and~$g$ are bounded in terms of~$f$).

A similar argument using the Riemann Hypothesis shows that~$Z_p\gg p$
as~$p\to+\infty$, and hence we deduce (generically at least) that the
sums~(\ref{eq-c1}) tend to~$0$ as~$p\to+\infty$.

\begin{remark}
  The condition that~$f$ is monic with respect to~$X$ is somewhat
  restrictive, and the convergence of~(\ref{eq-c1}) to~$0$ can be
  generalized to various other classes of polynomials. Since our goal
  is to illustrate the difference between the two types of sums, we do
  not attempt to discuss more general situations here.
\end{remark}

\end{document}